\documentclass[a4paper, 11pt]{article}

\usepackage[english]{babel}
\usepackage[T1]{fontenc}
\usepackage{amsmath}
\usepackage{amsthm}
\usepackage{amsfonts}
\usepackage{amssymb}
\usepackage[final]{graphicx}
\usepackage{paralist}
\usepackage{hyperref}
\usepackage{tikz}
\usepackage{booktabs}

\usepackage{cleveref}
\usepackage{subfigure}
\usepackage{pifont}
\usepackage{caption}
\usepackage{mathtools}

\theoremstyle{plain}
\newtheorem{theorem}{Theorem}[section]
\newtheorem{proposition}[theorem]{Proposition}
\newtheorem{lemma}[theorem]{Lemma}
\newtheorem{corollary}[theorem]{Corollary}

\theoremstyle{definition}
\newtheorem{definition}[theorem]{Definition}
\newtheorem{example}[theorem]{Example}
\newtheorem{remark}[theorem]{Remark}

\newcommand{\eps}{\varepsilon}


\DeclareMathOperator{\im}{Im}

\DeclareMathOperator{\re}{Re}


\DeclarePairedDelimiter{\abs}{\lvert}{\rvert}

\DeclarePairedDelimiter{\ceil}{\lceil}{\rceil}
\DeclarePairedDelimiter{\cc}{[}{]} 
\DeclarePairedDelimiter{\co}{[}{[} 

\newcommand{\R}{\ensuremath{\mathbb{R}}}
\newcommand{\C}{\ensuremath{\mathbb{C}}}

\newcommand{\N}{\ensuremath{\mathbb{N}}}


\newcommand{\cC}{\mathcal{C}}
\newcommand{\cM}{\mathcal{M}}





\newcommand{\conj}[1]{\overline{#1}}

\newcommand{\wn}[2]{n(#1; #2)}

\newcommand{\nm}[4]{H_{#3 - #2}^{#4}(#1)}


\DeclareMathOperator{\crit}{crit}

\numberwithin{equation}{section}

\title{\textsc{The transport of images method: computing all zeros of harmonic 
mappings by continuation}}

\author{Olivier S\`{e}te\footnotemark[1] \and Jan Zur\footnotemark[1]}

\begin{document}
\date{May 15, 2021}
\maketitle

\renewcommand{\thefootnote}{\fnsymbol{footnote}}

\footnotetext[1]{TU Berlin, Department of Mathematics, MA 3-3, Stra{\ss}e des 
17.\ Juni 136, 10623 Berlin, Germany.
\texttt{\{sete,zur\}@math.tu-berlin.de}}

\renewcommand{\thefootnote}{\arabic{footnote}}

\begin{abstract}
We present a continuation method to compute all zeros of a harmonic 
mapping $f$ in the complex plane. Our method works without any prior 
knowledge of the number of zeros or their approximate location. 
We start by computing all solution of $f(z) = \eta$ with $\abs{\eta}$ sufficiently large and then 
track all solutions as $\eta$ tends to $0$ to finally obtain all zeros of $f$. 
Using theoretical results on harmonic mappings 
we analyze where and how the number of 
solutions  of $f(z) = \eta$ changes
and incorporate this into the method.  
We prove that our method is guaranteed to compute all zeros, as long as none of them is singular.
In our numerical example the method always terminates with the correct 
number of zeros, is very fast compared to general purpose root finders
and is highly accurate in terms of the residual.
An easy-to-use MATLAB implementation is freely available online.
\end{abstract}
\paragraph*{Keywords:}
harmonic mappings,
continuation method,
root finding,
zeros,
Newton's method,
critical curves and caustics.
\paragraph*{Mathematics subject classification (2020):} 
65H20;   	
31A05;   	
30C55.   	

\section{Introduction}\label{sect:introduction} 

We study the zeros of harmonic mappings, i.e., $f:\Omega \to \C$ with $\Delta f 
= 0$ on an open subset $\Omega$ of $\C$.  These functions have a local 
decomposition 
\begin{equation}
f(z) = h(z) + \conj{g(z)}
\end{equation}
with analytic $h,g$, but are themselves non-analytic in general. 
Several results on the number of zeros for special 
classes of harmonic mappings are known in the literature, e.g., for harmonic 
polynomials, i.e., $f(z) = p(z) + \conj{q(z)}$, where $p$ and $q$ are analytic
polynomials~\cite{Wilmshurst1998,KhavinsonSwiatek2003,Geyer2008,
LeeLerarioLundberg2015}, for rational harmonic mappings of the form $f(z) = 
r(z) - \conj{z}$, where $r$ is a rational 
function~\cite{KhavinsonNeumann2006, BleherHommaJiRoeder2014,  
LuceSeteLiesen2014, 
LuceSeteLiesen2015, SeteLuceLiesen2015a, SeteLuceLiesen2015b, LiesenZur2018a, 
LiesenZur2018b, SeteZur2019b}, or certain transcendental harmonic 
mappings~\cite{FassnachtKeetonKhavinson2009,BergweilerEremenko2010,KhavinsonLundberg2010,LuceSete2021}. 

While the above publications have a theoretical focus, we are here interested
in the numerical computation of the zeros of $f$.  A \emph{single} zero can
be computed by an iterative root finder, e.g., Newton's method on $\R^2$
(see~\cite{SeteZur2020} for a convenient formulation for harmonic mappings).
The computation of \emph{all} zeros is much more challenging.  Of course, one
could focus on certain regions in the complex plane and run the root finder
with multiple starting points.  The problem with such an approach is, however,
that in general the number of zeros of a harmonic mapping is not known \emph{a priori}.
In fact, even for harmonic polynomials $f(z)=p(z)-\overline{z}$
with $\deg(p)=n\geq 2$, the number of zeros can vary between $n$ and $3n-2$,
and each of these different numbers of zeros is attained by some harmonic
polynomial; see~\cite[Thm.~5.4]{SeteZur2019b}.

In this paper we present a continuation method to compute all 
zeros of a (non-degenerate) harmonic mapping, provided that the number of zeros 
is finite. We are not aware of any method for this problem in the literature that is 
specialized to harmonic mappings.
Continuation is a general scheme that has been successfully applied
to the numerical solution of systems of nonlinear equations
(see, e.g.,~\cite[Ch.~7.5]{OrtegaRheinboldt1970}
or~\cite{AllgowerGeorg2003}), and in particular for the solution of polynomial systems (see, e.g., \cite{SommeseWampler2005} or~\cite{Morgan2009}).
Moreover, it is used in a wide 
range of applications, including mathematics for economics~\cite[Ch.~5]{Judd1998}, 
chemical physics~\cite{MehtaChenHauensteinWales2014} and astrophysics~\cite[Sect.~10.5]{SchneiderEhlersFalco1999}.
In our continuation approach, the considered function changes only by a 
constant, which is sometimes called \emph{Newton homotopy}.
The overall idea is as follows.
First we solve $f(z) = \eta_1$ for some $\eta_1 \in \C$.
We then construct a sequence $\eta_2, \eta_3, \dots$
and solve $f(z) = \eta_{k+1}$ based on the solutions of $f(z)= \eta_k$.  
We let the sequence $(\eta_k)_k$ tend towards the origin and eventually, after 
finitely many steps, we end with $\eta_n=0$, so that solving $f(z) = \eta_n$ 
yields (numerically) all zeros of~$f$.

Our method, which we call the \emph{transport of images method}, works without any prior knowledge of the number of zeros or their approximate location 
in the complex plane. The main novelty is that we overcome two typical 
difficulties encountered in continuation methods.
\begin{enumerate}
\item \emph{Finding start points:}
To set up the method
all solutions of $f(z) = \eta_1$ for some $\eta_1 \in \C$ are required.
In~\cite[Thm.~3.6]{SeteZur2019b} we proved that for sufficiently large 
$\abs{\eta_1}$ all solutions of $f(z) = \eta_1$ are close to the poles of $f$, 
which are usually known.
Then all solutions can be computed as in~\cite[Sect.~4]{SeteZur2020}.

\item \emph{Determine where and how the number of solutions changes:}
For harmonic mappings, the number of solutions of $f(z) = \eta_k$ depends on 
$\eta_k \in \C$; see, e.g.,~\cite[Remark~5.5]{SeteZur2019b}.
More precisely, the number of solutions of $f(z) = \eta_k$ and $f(z) = 
\eta_{k+1}$ differs by $2$ if $\eta_k$ and $\eta_{k+1}$ are separated by 
a single arc of the caustics of $f$ (curves of the critical values); 
see~\cite{SeteZur2019b}.
We deal with this effect as follows.
For every step from $\eta_k$ to $\eta_{k+1}$ we construct a (minimal) 
set of points $S_{k+1}$, such that all solutions of $f(z) = \eta_{k+1}$ are 
obtained by applying Newton's method with initial points from $S_{k+1}$.
This is based on results of Lyzzaik~\cite{Lyzzaik1992}, 
Neumann~\cite{Neumann2005} and the authors~\cite{SeteZur2019b}.
\end{enumerate} 
We prove that the transport of images method is guaranteed to compute all zeros 
of a non-degenerate harmonic mapping without singular zeros within a finite 
number of continuation steps.

Our numerical examples highlight three key features of our method:
(1)~It terminates with the correct number of zeros.
(2) It is significantly faster than methods that are not problem adapted, such 
as Chebfun2\footnote{Chebfun2, version of September 30, 2020, 
\url{www.chebfun.org}.}.
(3) It is highly accurate in terms of the residual.
An easy-to-use MATLAB implementation\footnote{Transport of images toolbox, 
\url{https://github.com/transportofimages/}.} is freely available online.

The name `transport of images' method is borrowed from astrophysics.
In~\cite[Sect.~10.5]{SchneiderEhlersFalco1999}, the idea is 
roughly described in the context of gravitational lensing.
There, the parameter $\eta$ models the position of a far away light 
source, and its lensed images are represented by the solutions of $f(z)=\eta$ 
with a harmonic mapping $f$;
see the surveys~\cite{KhavinsonNeumann2008, Petters2010, BeneteauHudson2018}. 
However, the description in~\cite{SchneiderEhlersFalco1999} lacks many details 
and contains no further analysis. 

Our paper is organized as follows.  Relevant results on harmonic 
mappings are recalled in Section~\ref{sect:preliminaries}.  In Section~\ref{sect:transport_of_images} 
we study the transport of images method and investigate the solution curves 
of $f(z) = \eta(t)$ depending on~$t$. Key aspects of our implementation are 
discussed in Section~\ref{sect:implementation}.  Section~\ref{sect:numerics} 
contains several numerical examples that illustrate our method.  We close with 
concluding remarks and a brief outlook in Section~\ref{sect:summary}.

\section{Preliminaries}
\label{sect:preliminaries}

We briefly present relevant results for the computation of all zeros of 
harmonic mappings, following the lines of~\cite{SeteZur2019b} 
and~\cite{Lyzzaik1992}.

\subsection{Decomposition of harmonic mappings}

Let $\Omega \subseteq \C$ be open and connected.
A \emph{harmonic mapping} on $\Omega$ is a function $f : \Omega \to \C$ with 
$\Delta f =
4 \partial_{\conj{z}} \partial_z f = 0$, where $\partial_z$ and 
$\partial_{\conj{z}}$ denote the \emph{Wirtinger derivatives}.
If $f$ is harmonic in the 
open disk $D_r(z_0) = \{z \in \C : \abs{z-z_0} < r\}$, it has a local 
decomposition
\begin{equation}\label{eqn:local_decomp}
f(z) = h(z) + \conj{g(z)} = \sum_{k=0}^\infty a_k(z-z_0)^k + 
\conj{\sum_{k=0}^\infty b_k(z-z_0)^k}, \quad z \in D_r(z_0),
\end{equation} 
with analytic functions $h$ and $g$ in $D_r(z_0)$, which are unique up to 
an additive constant; see~\cite[p.~412]{DurenHengartnerLaugesen1996} 
or~\cite[p.~7]{Duren2004}.
If $f$ is harmonic in the punctured disk $D = \{z \in \C: 0 < \abs{z-z_0} < 
r \}$, i.e., $z_0$ is an isolated singularity of $f$, then
\begin{equation}\label{eqn:local_decomp_sing}
f(z) = \sum_{\mathclap{k=-\infty}}^\infty a_k(z-z_0)^k + 
\conj{\sum_{\mathclap{k=-\infty}}^\infty b_k(z-z_0)^k} + 2 A \log \abs{z-z_0},
\quad z \in D;
\end{equation}
see~\cite{SuffridgeThompson2000,HengartnerSchober1987,ArangoArbelaezRivera2020}.
The point $z_0$ is a \emph{pole} of $f$ if $\lim_{z \to z_0} f(z) = \infty$, 
and an \emph{essential singularity} if the limit does not exist in 
$\widehat{\C} = \C \cup \{ \infty \}$; 
see~\cite[p.~44]{Sheil-Small2002}, \cite[Def.~2.1]{SuffridgeThompson2000} 
or~\cite[Def.~2.9]{SeteZur2019b}.
It is possible that $f$ has a pole at $z_0$, but both series 
in~\eqref{eqn:local_decomp_sing} have an essential singularity at $z_0$, as the 
example $f(z) = z^{-2} + \cos(i/z) + \conj{\cos(i/z)}$ 
from~\cite[Ex.~2.3]{SuffridgeThompson2000} shows.  
It is also possible that $f$ has an essential singularity at $z_0$, but both 
series have a pole at $z_0$, as shown by $f(z) = z^{-1} + \conj{z}^{-1}$.

The next proposition is the analog of the well-known facts that a function 
holomorphic on $\widehat{\C}$ is constant and a function meromorphic on 
$\widehat{\C}$ is rational.

\begin{proposition} \label{prop:harmonic_on_sphere}
A harmonic mapping $f$ on $\widehat{\C} \setminus \{ z_1, \ldots, z_m \}$ 
with poles at $z_1, \ldots, z_m \in \widehat{\C}$, at which the principal parts 
of both series in the decomposition~\eqref{eqn:local_decomp_sing} have only 
finitely many terms, has the form
\begin{equation} \label{eqn:ratratlog}
f(z) = r(z) + \conj{s(z)} + \sum_{j=1}^m 2 A_j \log \abs{z-z_j},
\end{equation}
where $A_1, \ldots, A_m \in \C$, and $r$ and $s$ are rational functions that 
can have poles only at $z_1, \ldots, z_m$.
If some $z_j = \infty$, then $\log \abs{z}$ replaces $\log \abs{z-z_j}$.
In particular, if $f$ is harmonic on $\widehat{\C}$, then $f$ is constant.
\end{proposition}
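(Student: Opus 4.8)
The plan is to reduce the statement to classical facts about meromorphic functions by separating the analytic and anti-analytic parts of $f$ and stripping off the logarithmic contributions. First I would observe that the hypothesis says $f$ is harmonic on $\widehat{\C}$ away from finitely many points $z_1,\dots,z_m$, each of which is a pole at which the principal parts of both series in~\eqref{eqn:local_decomp_sing} are finite; in particular, near each $z_j$ (with $z_j\neq\infty$, the case $z_j=\infty$ being handled by the change of variable $z\mapsto 1/z$) we may write, using~\eqref{eqn:local_decomp_sing}, the coefficient $A_j=A$ of the $\log\abs{z-z_j}$ term. I would set $L(z) \defby \sum_{j=1}^m 2A_j \log\abs{z-z_j}$ (with $\log\abs{z}$ in place of $\log\abs{z-z_j}$ if some $z_j=\infty$) and consider $F \defby f - L$. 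The point of subtracting $L$ is that $F$ is still harmonic on $\widehat{\C}\setminus\{z_1,\dots,z_m\}$, but now the local decomposition of $F$ near each $z_j$ has \emph{no} logarithmic term, so by~\eqref{eqn:local_decomp_sing} and the finiteness hypothesis on the principal parts, $F$ has a genuine (two-sided, no-log) Laurent-type decomposition $F = h + \conj{g}$ near each $z_j$ with $h,g$ meromorphic there.

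Next I would globalize this. Away from the $z_j$ the function $F$ is harmonic, hence locally $F = h + \conj g$ with $h,g$ analytic, unique up to an additive constant by the uniqueness in~\eqref{eqn:local_decomp}; the Wirtinger derivatives $\partial_z F$ and $\overline{\partial_{\conj z} F}$ are therefore globally well-defined holomorphic functions on $\widehat{\C}\setminus\{z_1,\dots,z_m\}$ (the ambiguous additive constants differentiate away), and near each $z_j$ they are meromorphic by the previous paragraph. Hence $\partial_z F$ extends to a meromorphic function on all of $\widehat{\C}$, so it equals a rational function $r'$ whose poles lie among $z_1,\dots,z_m$; likewise $\overline{\partial_{\conj z} F}$ equals a rational function $t$ with poles among the $z_j$. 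Integrating, there is a rational function $r$ with $\partial_z(F - r) = 0$ on the complement of the $z_j$, and an $s$ (rational, poles among the $z_j$) with $\partial_{\conj z}(F - \conj s) = 0$; combining, $F - r - \conj s$ is locally constant on the connected set $\widehat{\C}\setminus\{z_1,\dots,z_m\}$, hence a single constant $c$ that I fold into $r$. This yields $f = F + L = r + \conj s + \sum_{j=1}^m 2A_j \log\abs{z-z_j}$, which is~\eqref{eqn:ratratlog}.

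For the final sentence, if $f$ is harmonic on all of $\widehat{\C}$ there are no $z_j$ at all, so $L=0$ and $f = h + \conj g$ globally with $h,g$ analytic on $\widehat{\C}$; a function holomorphic on $\widehat{\C}$ is constant, so $h$ and $g$ are constants and $f$ is constant. Alternatively this drops out of the displayed formula with $m=0$.

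The main obstacle I expect is the bookkeeping at $z_j=\infty$ and, more subtly, justifying that the $\log$-coefficient $A$ appearing in~\eqref{eqn:local_decomp_sing} is genuinely the same object one can subtract off globally — i.e., that $F = f - L$ really has no logarithmic term in \emph{every} local expansion, not just the one at the chosen $z_j$, and that $\log\abs{z-z_j}$ is itself harmonic on $\C\setminus\{z_j\}$ so that $F$ stays harmonic. One must also be careful that $\partial_z\log\abs{z-z_j} = \tfrac{1}{2}(z-z_j)^{-1}$ is rational (so subtracting $L$ does not reintroduce non-meromorphic behavior in the derivatives), and handle the transformation to $1/z$ near $\infty$ so that a log singularity there also becomes a rational contribution to $\partial_z F$. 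None of this is deep, but it is where the argument could go wrong if stated carelessly; everything else is a direct appeal to the uniqueness in~\eqref{eqn:local_decomp} and Liouville's theorem for $\widehat{\C}$.
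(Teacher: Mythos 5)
Your proof takes a genuinely different route from the paper's. The paper first establishes the special case (harmonic on $\widehat{\C}$ implies constant) via the Wirtinger derivative argument, and then, for the general case, subtracts off at each pole $z_j$ the \emph{full} local principal part $f_j$ --- the negative-index Laurent terms of both series together with the log term --- so that $f - \sum_j f_j$ has only removable singularities and the special case applies. You instead subtract only the logarithmic terms, work directly with the globally defined meromorphic Wirtinger derivatives $\partial_z F$ and $\overline{\partial_{\conj z} F}$ of $F = f - L$, identify these as rational, and integrate; the constant case then drops out with $m=0$ rather than being used as a lemma. Your version parallels the classical proof that a meromorphic function on $\widehat{\C}$ is rational, whereas the paper's reduction avoids any integration.

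That said, your integration step has a gap you should close: a rational function $r'$ with poles among $z_1,\dots,z_m$ does \emph{not} in general have a rational antiderivative (consider $1/z$). You need the residues of $\partial_z F$ (and of $\overline{\partial_{\conj z} F}$) to vanish at every finite pole, and this is precisely where subtracting $L$ earns its keep: near $z_j$ the residue of $\partial_z f$ is $A_j$, coming entirely from $\partial_z\bigl(2A_j\log\abs{z-z_j}\bigr) = A_j/(z-z_j)$, since term-by-term differentiation of a Laurent series can never produce a $(z-z_j)^{-1}$ term; and $\partial_z L$ has the same residue $A_j$ there, so $\partial_z F$ has residue zero. Your caveat paragraph notices that $\partial_z \log\abs{z-z_j}$ is rational but frames the concern as ``not reintroducing non-meromorphic behavior,'' when the actual issue is residue cancellation enabling a rational primitive. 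Once that point is made explicit (and the $z_j=\infty$ bookkeeping you flag is handled, e.g.\ by omitting the $\infty$ term from $L$ and letting the residue-sum identity absorb it), your argument is correct.
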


\begin{proof}
First, let $f$ be a harmonic mapping on $\widehat{\C}$.  Then $\partial_z f$ is 
analytic on $\widehat{\C}$
and thus constant~\cite[Thm.~3.5.8]{Wegert2012}.
Let $h' = \partial_z f$, then $h(z) = a_1 z + a_0$ is analytic.
Let $g = \conj{f-h}$.  Since $\partial_{\conj{z}} g = \conj{\partial_z f 
- h'} = 0$, $g$ is analytic on $\widehat{\C}$ and hence constant, say $g(z) = 
b_0$, and we have $f(z) = a_1 z + a_0 + \conj{b}_0$.  Since $f$ is harmonic at 
$\infty$ we have $a_1 = 0$ and $f$ is constant.
Next, let $f$ be harmonic with poles at $z_1, \ldots, z_m$.
If $z_j \neq \infty$ the 
decomposition~\eqref{eqn:local_decomp_sing} has the form
\begin{equation*}
f(z) = \sum_{k=-n}^\infty (a_k (z-z_j)^k + \conj{b_k (z-z_j)^k} ) + 2 A_j \log 
\abs{z-z_j},
\end{equation*}
and we set $f_j(z) = \sum_{k=-n}^{-1} (a_k (z-z_j)^k + \conj{b_k (z-z_j)^k} ) + 
2 A_j \log \abs{z-z_j}$ and similarly if $z_j = \infty$.
Then $f - f_1 - \ldots - f_m$ has removable singularities at $z_1, \ldots, 
z_m$, and can be extended to a harmonic mapping in $\widehat{\C}$ 
(see~\cite[Sect.~2.2]{SeteZur2019b}, or also~\cite[Thm.~15.3d]{Henrici1986}), which 
is constant.
\end{proof}

\subsection{Critical set and caustics of harmonic mappings}
\label{sect:critcaus}

The critical points of a harmonic mapping $f: \Omega \to \C$, i.e., the points 
where the \emph{Jacobian} of $f$,
\begin{equation} \label{eqn:jacobian}
J_f(z) = \abs{\partial_z f(z)}^2 - \abs{\partial_{\conj{z}} f(z)}^2,
\end{equation}
vanishes, form the \emph{critical set} of $f$,
\begin{equation} \label{eqn:crit}
\cC = \{ z \in \Omega : J_f(z) = 0 \}.
\end{equation}
By Lewy's theorem~\cite[p.~20]{Duren2004}
$f$ is locally univalent at $z \in \Omega$ if and only if $z \not\in \cC$.
Here and in the following we assume that $\partial_z f$ and 
$\partial_{\conj{z}} f$ do not vanish identically. (Otherwise $f$ would be 
analytic or anti-analytic, which is not the focus of this paper.)
Then $\partial_z f$ has only isolated zeros in $\Omega$, and the critical set 
consists of isolated points and a level set of the \emph{second complex 
dilatation} of $f$,
\begin{equation}
\omega(z) = \frac{\conj{\partial_{\conj{z}}f(z)}}{\partial_z f(z)},
\end{equation}
which is a meromorphic function in $\Omega$.  We assume that removable 
singularities of $\omega$ in $\Omega$ are removed.
Let
\begin{align}
\cM &= \{ z \in \cC : \abs{\omega(z)} \neq 1 \} \label{eqn:def_cM} \\
&= \{ z \in \Omega : \partial_z f(z) = \partial_{\conj{z}} f (z) = 0 \text{ and 
} 
\lim_{\zeta \to z} \abs{\omega(\zeta)} \neq 1 \},
\end{align}
see \cite[Sect.~2.1]{SeteZur2019b}, which consists of isolated points of 
$\cC$~\cite[Lem.~2.2]{Lyzzaik1992}. Then
\begin{equation}\label{eqn:crit_omega}
\cC \setminus \cM
= \{ z \in \Omega : \abs{\omega(z)} = 1 \}
\end{equation}
is a level set of $\omega$.
If $J_f \not\equiv 0$, then $\cC \setminus \cM$ consists of analytic curves.
These intersect in $z \in \cC \setminus \cM$ if, and only if, $\omega'(z) = 0$.
At $z \in \cC \setminus \cM$ with $\omega'(z) \neq 0$, the equation
\begin{equation} \label{eqn:parametrization}
\omega(\gamma(t)) = e^{it}, \quad t \in I \subseteq \R,
\end{equation}
implicitly defines a local analytic parametrization $z = \gamma(t)$ of $\cC 
\setminus \cM$.

\begin{definition}
We call the set of critical values of $f$, i.e., $f(\cC)$,
the \emph{caustics} or the \emph{set of caustic points}.
The caustics induce a partition of $\C \setminus f(\cC)$, and
we call a connected component $A \subseteq \C \setminus f(\cC)$ with 
$\partial A \subseteq f(\cC)$ a \emph{caustic tile}.
\end{definition}

The next lemma characterizes a tangent vector to the caustics; see 
\cite[Lem.~2.1]{SeteZur2019b} or \cite[Lem.~2.3]{Lyzzaik1992}.

\begin{lemma} \label{lem:tangent_to_caustic}
Let $f$ be a harmonic mapping, $z_0 \in \cC \setminus \cM$ with $\omega'(z_0) 
\neq 0$ and let $z_0 = \gamma(t_0)$ with the 
parametrization~\eqref{eqn:parametrization}.
Then $f \circ \gamma$ is a parametrization of a caustic and the
corresponding tangent vector at $f(z_0)$ is
\begin{equation} \label{eqn:tangent_to_caus}
\tau(t_0) = \frac{d}{dt} (f \circ \gamma)(t_0)
= e^{- i t_0/2} \psi(t_0)
\end{equation}
with 
\begin{equation} \label{eqn:psi}
\psi(t) = 2 \re( e^{i t/2} \, \partial_z f(\gamma(t)) \, \gamma'(t)).
\end{equation}
In particular, the rate of change of the argument of the tangent vector is
\begin{equation}
\frac{d}{dt} \arg(\tau(t)) \big\vert_{t = t_0} = - \frac{1}{2}
\end{equation}
at points where $\psi(t_0) \neq 0$,
i.e., the curvature of the caustics is constant with respect to the 
parametrization $f \circ \gamma$.
Moreover, $\psi$ has either only finitely many zeros, or is identically zero, 
in which case $f \circ \gamma$ is constant.
\end{lemma}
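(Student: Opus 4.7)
The plan is to differentiate $f \circ \gamma$ directly, substitute the constraint $\omega(\gamma(t)) = e^{it}$ to eliminate $\partial_{\conj{z}} f$, and then read off all four claims from the resulting expression.

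First I would apply the chain rule in Wirtinger form, which gives
\begin{equation*}
\frac{d}{dt}(f \circ \gamma)(t)
= \partial_z f(\gamma(t))\,\gamma'(t) + \partial_{\conj{z}} f(\gamma(t))\,\conj{\gamma'(t)}.
\end{equation*}
On $\cC \setminus \cM$ we have $\abs{\omega}=1$, and $\omega(\gamma(t))=e^{it}$ together with the definition of $\omega$ yields $\partial_{\conj{z}} f(\gamma(t)) = e^{-it}\,\conj{\partial_z f(\gamma(t))}$. Substituting this and factoring out $e^{-it/2}$, the two summands become complex conjugates of one another, so their sum equals $2\re\bigl(e^{it/2}\,\partial_z f(\gamma(t))\,\gamma'(t)\bigr) = \psi(t)$. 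This gives $\tau(t) = e^{-it/2}\psi(t)$ and, since $\gamma$ parametrizes $\cC \setminus \cM$, the image $f \circ \gamma$ lies in $f(\cC)$, i.e., it parametrizes a caustic.

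Next I would establish the curvature statement. Because $\omega$ is meromorphic with $\omega'(z_0)\neq 0$, the implicit function theorem applied to $\omega(\gamma(t))=e^{it}$ shows that $\gamma$ is real-analytic near $t_0$; hence $\psi$ is real-analytic and, in particular, real-valued. At any $t_0$ with $\psi(t_0)\neq 0$, the sign of $\psi$ is locally constant, so for $t$ near $t_0$ one has $\arg(\tau(t)) \equiv -t/2 \pmod{\pi}$, and differentiation gives $\tfrac{d}{dt}\arg(\tau(t))\big\vert_{t=t_0} = -\tfrac{1}{2}$.

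Finally, the dichotomy for $\psi$ follows from real-analyticity: either the zero set of $\psi$ on a connected real-analytic piece of parametrization is discrete (hence finite on any compact arc), or $\psi$ vanishes identically, in which case $\tau \equiv 0$ and $f \circ \gamma$ is constant. The only genuinely subtle step is the real-analyticity of $\gamma$, which is why the hypothesis $\omega'(z_0) \neq 0$ is essential; once that is in place, everything else is a direct algebraic manipulation of the chain-rule identity using the level-set constraint $\abs{\omega} = 1$.
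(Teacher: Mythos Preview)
Your argument is correct and is precisely the standard route: apply the chain rule, use the constraint $\omega(\gamma(t))=e^{it}$ to rewrite $\partial_{\conj z}f$ in terms of $\conj{\partial_z f}$, and factor out $e^{-it/2}$ so that the remaining bracket is $2\re(\,\cdot\,)$. The paper does not actually supply a proof of this lemma; it simply cites \cite[Lem.~2.1]{SeteZur2019b} and \cite[Lem.~2.3]{Lyzzaik1992}, where exactly this computation is carried out, so your proposal matches the intended argument.

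One small point worth tightening: for the final dichotomy you invoke real-analyticity of $\gamma$ (and hence of $\psi$) via the local invertibility of $\omega$ at $z_0$. That is fine near $t_0$, but the claim ``$\psi$ has only finitely many zeros or is identically zero'' is meant along the whole critical arc. To make this global you should note that on a non-degenerate $f$ (which is the setting where the lemma is actually used later) the critical arcs are closed analytic curves with $\omega'\neq 0$ except at finitely many points, so the local real-analytic patches glue and the identity principle for real-analytic functions gives the dichotomy on the full compact parameter interval. This is not a gap in the idea, just something to spell out so that ``finitely many'' is justified on the entire arc rather than only on a compact subarc near $t_0$.
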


\begin{definition}[{\cite[Def.~2.2]{SeteZur2019b}}]
In the notation of Lemma~\ref{lem:tangent_to_caustic} we call $f(\gamma(t_0))$ 
a \emph{fold caustic point} or simply a \emph{fold}, if the tangent $\tau(t_0)$ 
is non-zero, i.e., $\psi(t_0) \neq 0$, and a \emph{cusp} if $\psi$ has a zero with a sign change at~$t_0$.
\end{definition}

This classification of caustic points is not complete but sufficient for our 
needs.
Fold points form open arcs of the caustic curves, since $\psi$ is continuous.
At a cusp the argument of the tangent vector changes by $+\pi$.
The cusps, the curvature of the caustics and the four caustic 
tiles are apparent in 
Figure~\ref{fig:counting_example} (left).

\subsection{Pre-images under non-degenerate harmonic mappings}

The position of $\eta \in \C$ with respect to the caustics affects the number 
of pre-images of $\eta$ under $f$.
The following wide class of harmonic mappings was introduced 
in~\cite[Def.~3.1]{SeteZur2019b}.

\begin{definition} \label{def:nondegenerate}
We call a harmonic mapping $f$ \emph{non-degenerate} on $\widehat{\C}$ if
\begin{enumerate}
\item $f$ is defined in $\widehat{\C}$ except at finitely many poles $z_1, 
\dots, z_m \in \widehat{\C}$,
\item at each pole $z_j \in \C$ of $f$, the 
decomposition~\eqref{eqn:local_decomp_sing} of $f$ has the form
\begin{equation} \label{eqn:near_pole}
f(z) = \sum_{k=-n}^\infty a_k(z-z_j)^k + \conj{\sum_{k=-n}^\infty b_k(z-z_j)^k} 
+ 2 A_j \log\abs{z-z_j}
\end{equation}
with $n\ge 1$ and $\abs{a_{-n}} \neq \abs{b_{-n}}$,
and if $z_j = \infty$ is a pole of $f$, then
\begin{equation} \label{eqn:near_pole_infty}
f(z) = \sum_{k=-\infty}^n a_kz^k + \conj{\sum_{k=-\infty}^n b_kz^k} + 2 A_j 
\log\abs{z}, \quad \text{for } \abs{z} > R,
\end{equation}
with $n\ge 1$ and $\abs{a_{n}} \neq \abs{b_{n}}$, and $R > 0$,
\item the critical set of $f$ is bounded in $\C$.
\end{enumerate}
\end{definition}

The intensively studied (see references in the introduction) rational harmonic 
mappings $r(z) - \conj{z}$ and harmonic polynomials 
$p(z) + \conj{q(z)}$ are non-degenerate if, and only if,
$\lim_{z \to \infty} \abs{r(z)/z} \neq 1$ and 
$\lim_{z \to \infty} \abs{p(z)/q(z)} \neq 1$, respectively, since
$\infty$ is the only pole of $p(z)$, $q(z)$ and $z$.

A non-degenerate harmonic mapping $f$ has a global 
decomposition~\eqref{eqn:ratratlog}, and
\begin{equation} \label{eqn:dzdzbarf}
\partial_z f(z) = r'(z) + \sum_{j=1}^m \frac{A_j}{z - z_j}, \quad
\conj{\partial_{\conj{z}} f(z)} = s'(z) + \sum_{j=1}^m \frac{\conj{A}_j}{z - 
z_j},
\end{equation}
are rational functions.
By item 2., the poles are not accumulation points of the critical 
set~\cite[Lem.~2.2, 2.3]{SuffridgeThompson2000}.
Hence, $\cC$ and $f(\cC)$ are compact sets in $\C$.
Item 3.\ implies that $J_f \not\equiv 0$.
If $\partial_z f \equiv 0$, then $\conj{\partial_{\conj{z}} f} \not\equiv 0$ 
(since $J_f \not\equiv 0$), and $\cC$ consists of the finitely many zeros 
of the rational function $\conj{\partial_{\conj{z}} f}$.
If $\partial_z f \not\equiv 0$, then $\omega = \conj{\partial_{\conj{z}} f} / 
\partial_z f$ is rational, $\cM$ has only finitely many points, and $\cC 
\setminus \cM$ consists of the $\deg(\omega)$ many pre-images of the unit 
circle under $\omega$.
We can parametrize $\cC \setminus \cM$ with analytic closed curves according 
to~\eqref{eqn:parametrization}, such that every $z \in \cC\setminus\cM$ with 
$\omega'(z) \neq 0$ belongs to exactly one curve;
see~\cite[Seect.~3.1]{SeteZur2019b}.  We call these curves the \emph{critical 
curves} of $f$ and denote the set of all of them by $\crit$.
The critical curves and their images under $f$ both have finite total length 
since their parametrization is piecewise $C^\infty$.

Let $P(f) = n_1 + \ldots + n_m$ be the sum of the orders of the poles $z_1, 
\ldots, z_m$ in~\eqref{eqn:near_pole} or~\eqref{eqn:near_pole_infty}.
Then $N_\eta(f) = \abs{ \{ z \in \widehat{\C} : f(z) = \eta \} }$, 
the number of pre-images of $\eta$ under $f$, relates to 
$P(f)$ and the winding number $n(f \circ \gamma; \eta)$ of the caustics about 
$\eta$ as follows.

\begin{theorem}[{\cite[Thms.~3.4, 3.7]{SeteZur2019b}}]
\label{thm:relative_counting}
Let $f$ be a non-degenerate harmonic mapping on $\widehat{\C}$.
Then for a point $\eta \in \C \setminus f(\cC)$ we have
\begin{equation}
N_\eta(f) = P(f) + 2 \sum_{\gamma \in \crit} n(f \circ \gamma; \eta).
\end{equation}
In particular, the number of pre-images of $\eta \in \C \setminus f(\cC)$ under 
$f$ is finite.
Moreover, if $\eta_1, \eta_2 \in \C \setminus f(\cC)$ 
then
\begin{equation}
N_{\eta_2}(f) = N_{\eta_1}(f) + 2 \sum_{\gamma \in \crit} \big( \wn{f \circ 
\gamma}{\eta_2} - \wn{f \circ \gamma}{\eta_1} \big),
\end{equation}
and we have:
\begin{enumerate}
\item If $\eta_1$ and $\eta_2$ are in the same caustic tile then the number 
of pre-images under $f$ is the same, i.e., $N_{\eta_2}(f) = N_{\eta_1}(f)$.

\item If $\eta_1$ and $\eta_2$ are separated by a single caustic arc $f \circ 
\gamma$ then the number of pre-images under $f$ differs by $2$, i.e., 
$N_{\eta_2}(f) = N_{\eta_1}(f) \pm 2$.
\end{enumerate} 
\end{theorem}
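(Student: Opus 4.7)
The plan is to establish the formula by continuation: verify it when $\abs{\eta}$ is large and track how both sides change as $\eta$ varies in $\C \setminus f(\cC)$.

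\textbf{Base case at infinity.} For $\abs{\eta}$ large I would use the non-degeneracy condition $\abs{a_{-n_j}} \neq \abs{b_{-n_j}}$ at each pole $z_j$ to show that $f(z) = \eta$ has exactly $n_j$ solutions in a small punctured neighbourhood of $z_j$: the dominant leading term in~\eqref{eqn:near_pole} (respectively~\eqref{eqn:near_pole_infty}) makes $f$ locally behave like $z \mapsto c(z-z_j)^{-n_j}$ or its conjugate, which attains every large value exactly $n_j$ times. Summing over the poles gives $N_\eta(f) = P(f)$. Since $\cC$ is bounded by non-degeneracy, $f(\cC)$ is compact, so every $\wn{f\circ\gamma}{\eta} = 0$ when $\abs{\eta}$ is sufficiently large, and the formula holds.

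\textbf{Finiteness and local constancy.} If $\eta \notin f(\cC)$ then every pre-image of $\eta$ is a regular point ($J_f \neq 0$), hence isolated. Combined with the local asymptotics above (pre-images do not accumulate at poles), $f^{-1}(\eta)$ is a finite set. By the inverse function theorem applied to $f$ as a map $\R^2 \to \R^2$, each pre-image depends smoothly on $\eta$ within any caustic tile, so $N_\eta(f)$ is locally constant on $\C \setminus f(\cC)$; the winding numbers $\wn{f\circ\gamma}{\eta}$ are obviously locally constant as well. Hence any change in the two sides of the identity depends only on how a connecting path crosses the caustics.

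\textbf{Crossing a single fold arc.} The central step is to analyse a smooth path $\eta(t) \in \C \setminus f(\cC)$ meeting $f(\cC)$ at a single fold point $\eta(t_0) = f(\gamma(t_0))$ transversally. By Lemma~\ref{lem:tangent_to_caustic}, $\tau(t_0) \neq 0$, so near $\gamma(t_0)$ the map $f$ has the standard fold normal form. As $\eta(t)$ crosses $\eta(t_0)$, exactly two pre-images are born (or annihilate), and simultaneously $\wn{f\circ\gamma}{\eta(t)}$ jumps by $\pm 1$ while all other winding numbers are unchanged. The signs match because the Jacobian $J_f$ takes opposite signs on the two sides of $\cC \setminus \cM$ (the regions $\{\abs{\omega} < 1\}$ and $\{\abs{\omega} > 1\}$), and combining this with the formula for $\tau(t_0)$ in~\eqref{eqn:tangent_to_caus} pins down the side of $f\circ\gamma$ on which the new pre-images appear.

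\textbf{Conclusion and main obstacle.} Any $\eta \in \C \setminus f(\cC)$ can be joined to a base point with $\abs{\eta}$ large by a generic piecewise-smooth path meeting $f(\cC)$ only at finitely many transversal fold crossings, since cusps and self-intersections of the caustics form a finite set that a small perturbation avoids. The identity then propagates from the base case, yielding the main formula and its relative version, and items~1 and~2 follow at once. The principal obstacle is the sign bookkeeping at a fold crossing: one has to verify simultaneously that the pair of new pre-images appears on the correct side of the caustic arc and that the winding number changes in the matching direction. This relies on the parametrization $\omega(\gamma(t)) = e^{it}$ from~\eqref{eqn:parametrization} together with a careful local study of fold maps in the spirit of~\cite{Lyzzaik1992}.
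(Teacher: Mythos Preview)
The paper does not prove this theorem; it is quoted verbatim from~\cite[Thms.~3.4, 3.7]{SeteZur2019b} and used as a black box. So there is no proof here to compare against. For what it is worth, the argument in~\cite{SeteZur2019b} proceeds via the argument principle for harmonic mappings (counting sense-preserving minus sense-reversing pre-images as a winding number), which gives the formula globally in one stroke rather than by wall-crossing.

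Your continuation strategy is sound in outline, and in fact the present paper uses exactly this kind of path argument elsewhere (Theorems~\ref{thm:zero_at_crit} and~\ref{thm:transport_path}). One point needs care: your claim that ``self-intersections of the caustics form a finite set that a small perturbation avoids'' is false in general. The paper gives an explicit non-degenerate example, $f(z)=\tfrac12(z^2-1)^2+\conj{z^2-1}$, in which two critical arcs map onto the \emph{same} caustic arc, so every point of that arc is a multiple caustic point. A generic path cannot avoid crossing such an arc. This does not break your argument, but it means the crossing step must be formulated more carefully: at a crossing point $\eta_0$ that is a fold value for several critical points $z_1,\ldots,z_k$ simultaneously (with all tangents non-zero and the crossing transverse to each), the fold behaviour of Theorem~\ref{thm:zero_at_crit} applies independently in a neighbourhood of each $z_j$, so $N_\eta$ jumps by $\sum_j(\pm 2)$ while $\sum_\gamma \wn{f\circ\gamma}{\eta}$ jumps by the matching $\sum_j(\pm 1)$. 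With this amendment your wall-crossing proof goes through; the remaining sign verification you flag as the main obstacle is precisely the content of~\cite[Thm.~4.2]{SeteZur2019b}, restated here as Theorem~\ref{thm:zero_at_crit}.
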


\begin{example}[{\cite[Ex.~3.10]{SeteZur2019b}}] \label{ex:log}
We study the non-degenerate harmonic mapping
\begin{equation}\label{eqn:example_log}
f(z)
= z^2 + \conj{z^{-1} + (z+1)^{-1}} + 2\log\abs{z};
\end{equation}
see Figure~\ref{fig:counting_example}.
By Lemma~\ref{lem:tangent_to_caustic} the curvature of a caustic is always 
negative.
Consequently, the winding numbers of the 
only caustic $f\circ\gamma$ about $\eta_1, \eta_2$ and $\eta_3$ are 
$0$, $-1$ and $1$ respectively.  By Theorem~\ref{thm:relative_counting} we have 
\begin{equation*}
\begin{split}
N_{\eta_1}(f) &= P(f) + 2 n(f \circ \gamma; \eta_1) = 4 + 0 = 4, \\
N_{\eta_2}(f) &= N_{\eta_1}(f) + 2(n(f\circ\gamma;\eta_2) - 
n(f\circ\gamma;\eta_1)) = 4 - 2 = 2, \\
N_{\eta_3}(f) &= N_{\eta_1}(f) + 2(n(f\circ\gamma;\eta_3) - 
n(f\circ\gamma;\eta_1)) = 4 + 2 = 6, 
\end{split}
\end{equation*}
as we see in Figure~\ref{fig:counting_example}.
\end{example}

\begin{figure}[t]
{\centering
\includegraphics[width=0.24\linewidth,height=0.24\linewidth]
{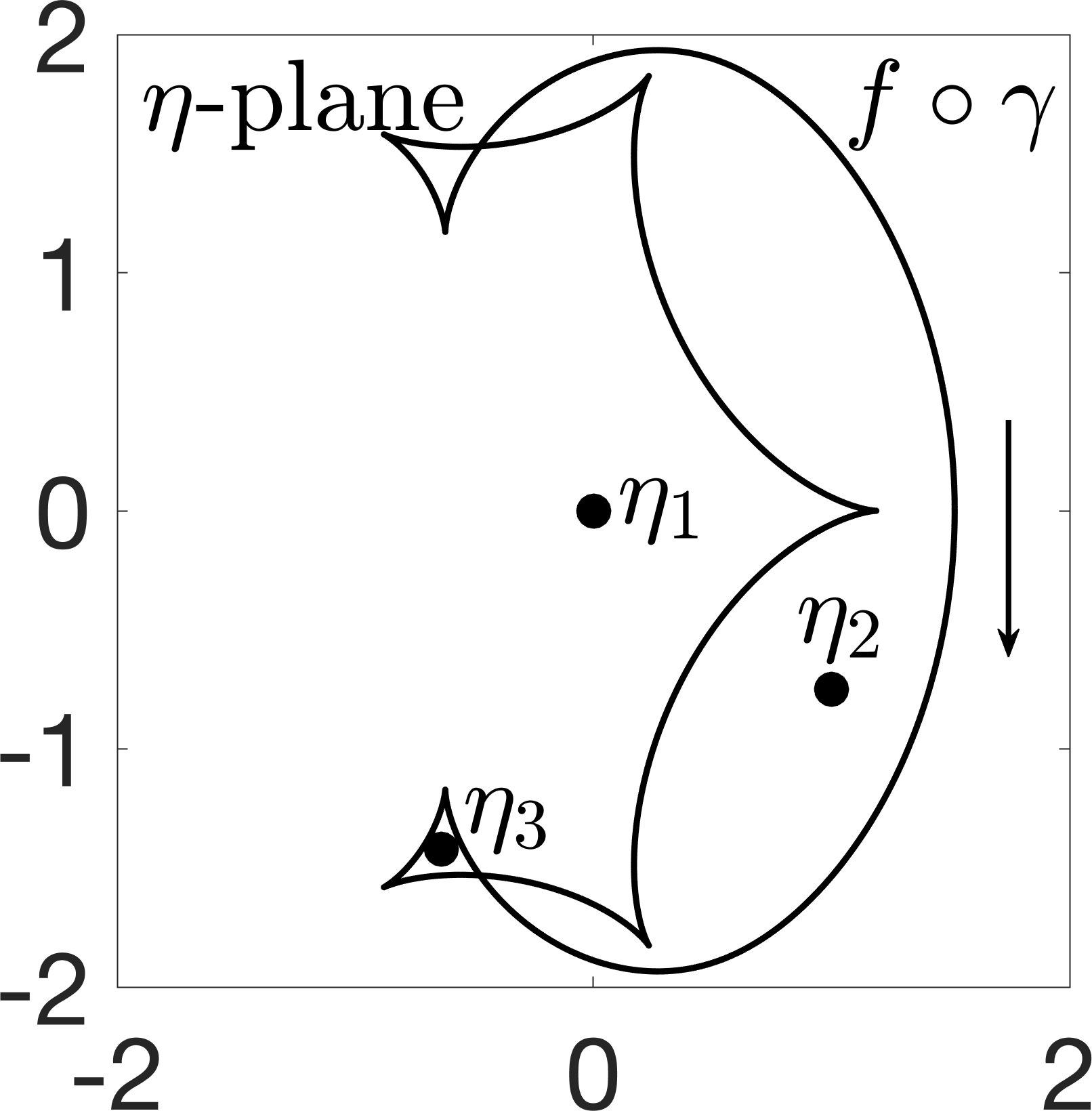}
\includegraphics[width=0.24\linewidth,height=0.24\linewidth]
{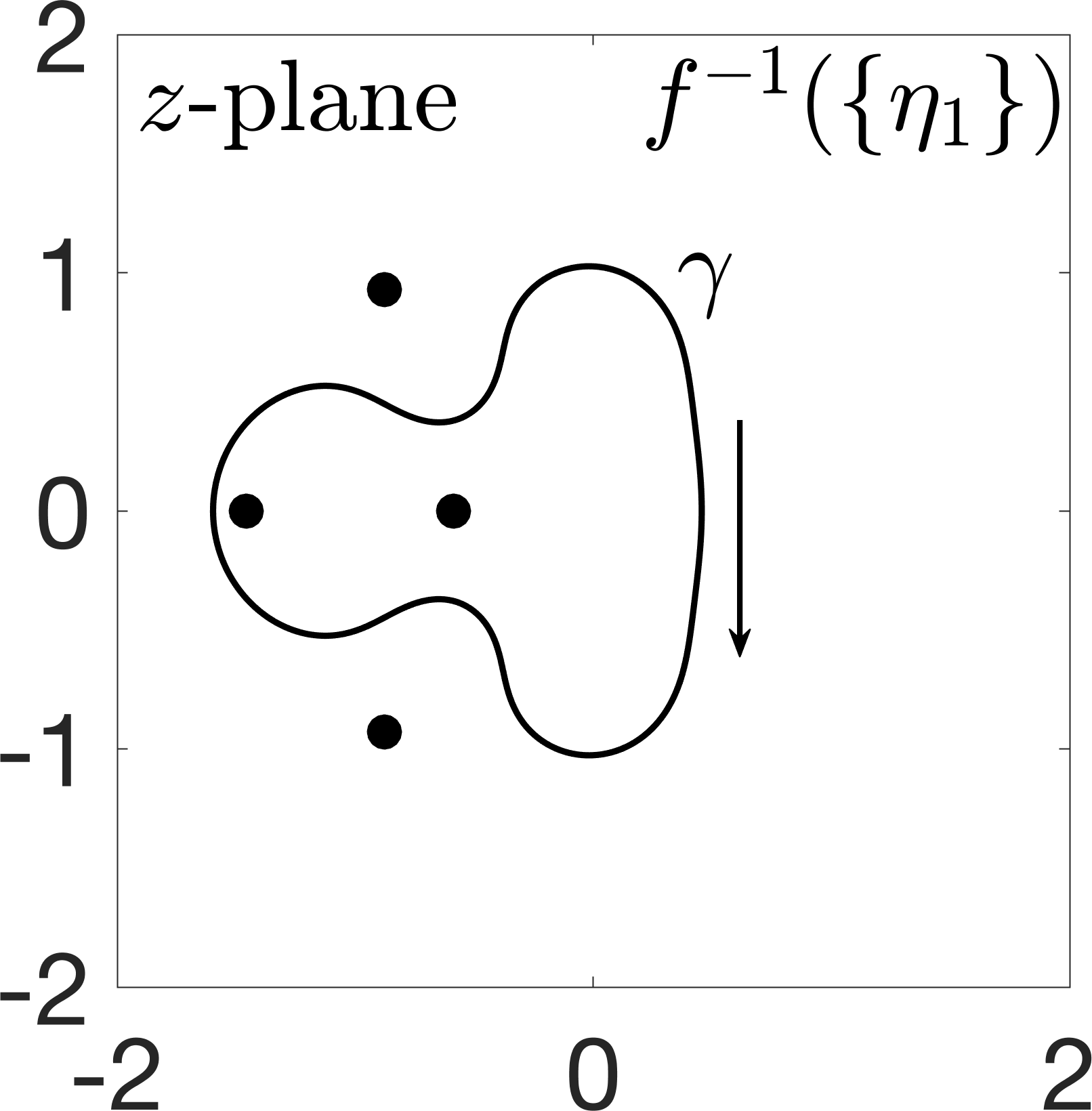}
\includegraphics[width=0.24\linewidth,height=0.24\linewidth]
{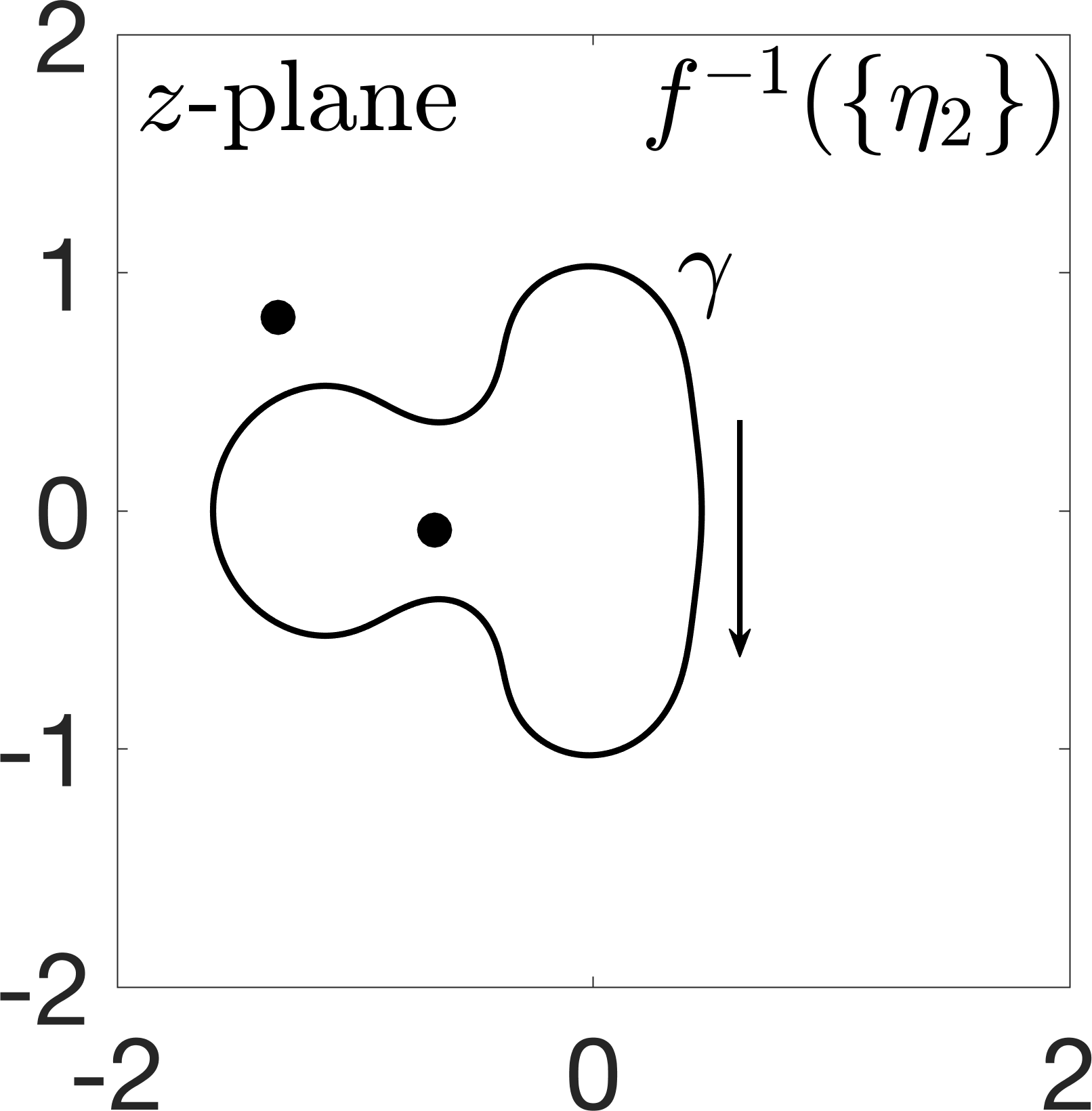}
\includegraphics[width=0.24\linewidth,height=0.24\linewidth]
{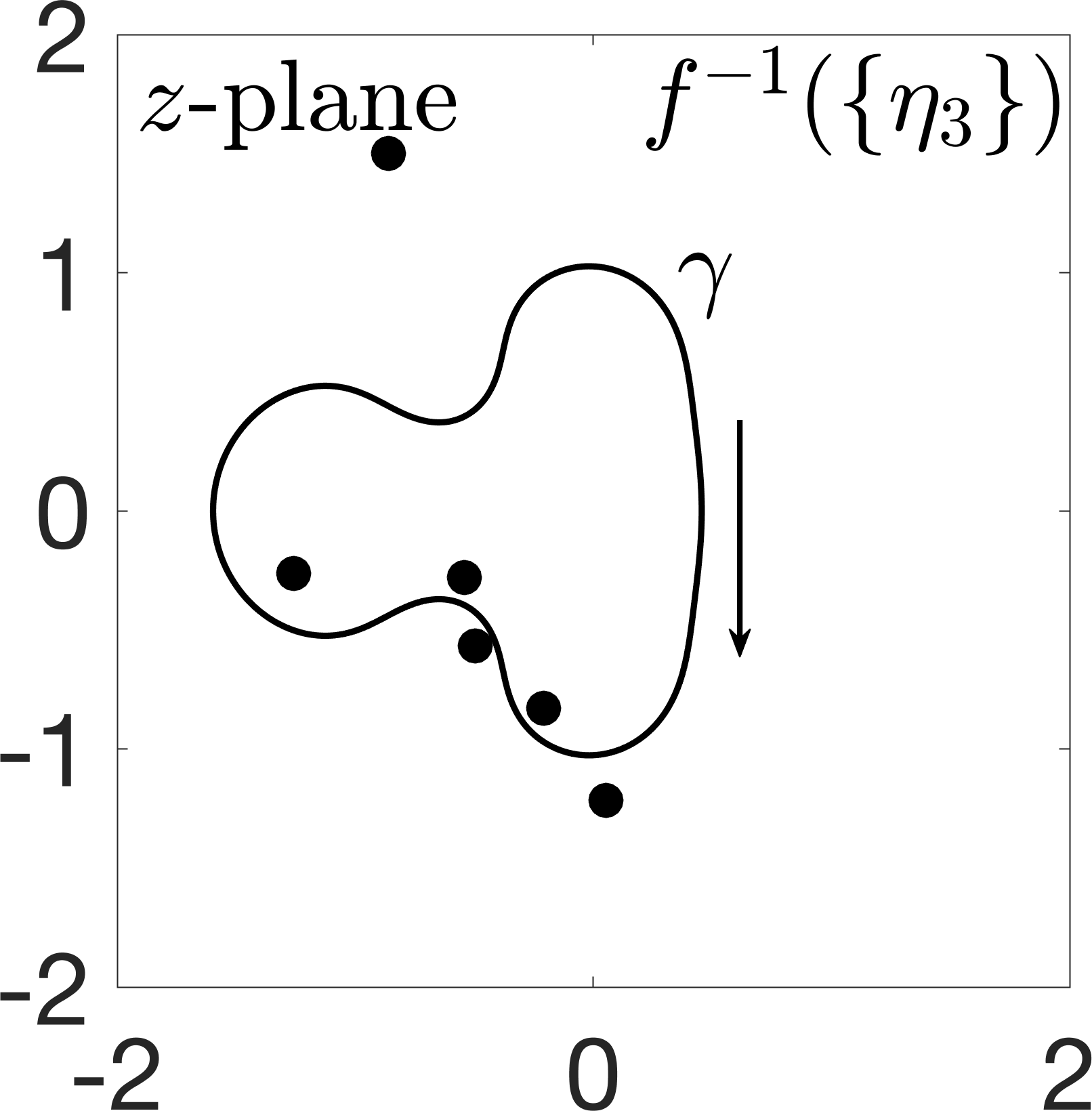}

}
\caption{Harmonic mapping~\eqref{eqn:example_log}. 
$\eta$-plane: the caustic $f \circ \gamma$ of $f$ and the points $\eta_j$.
$z$-plane: pre-images of $\eta_j$ under $f$. 
The arrows indicate the orientation 
of the critical curve $\gamma$ and the caustic $f \circ \gamma$.}
\label{fig:counting_example}
\end{figure}

\section{The transport of images method}\label{sect:transport_of_images}

We compute all zeros of a non-degenerate harmonic mapping $f$ by continuation, 
provided that no zero of $f$ is singular, or equivalently, if $0$ belongs to the caustics of $f$.
A zero $z_0$ is \emph{singular}, if $J_f(z_0) = 0$ holds.
Recall from the introduction that
we successively compute all solutions of $f(z) = \eta_k$ for a sequence 
$\eta_1, \eta_2, \ldots$ that tends to zero.
During this procedure the number of solutions may change, 
depending on the positions of the points $\eta_k$ relative to the caustics of 
$f$; see Theorem~\ref{thm:relative_counting}.
We start by computing all solutions of $f(z) = \eta_1$ for  
sufficiently large $\abs{\eta_1}$ from the local decomposition \eqref{eqn:local_decomp_sing} 
of $f$ near the poles.
Given all solutions of $f(z) = \eta_k$ we determine an $\eta_{k+1} \in \C$ and a set
$S_{k+1} \subset \C$ (prediction) such that the following holds:
(1) for every solution $z_* \in \C$ of $f(z) = \eta_{k+1}$ there exists a $z_0 
\in S_{k+1}$ such that the sequence $(z_j)_{j \in \N}$ of Newton iterates
for $f - \eta_{k+1}$ converges to~$z_*$ (correction);
(2) the number of solutions of $f(z) = \eta_{k+1}$ coincides with the number of 
elements in $S_{k+1}$, i.e., $S_{k+1}$ is minimal.
When this prediction-correction scheme reaches $\eta_n = 0$ we obtain 
\emph{all} zeros of~$f$.
This leads to the following algorithm.

\paragraph{The transport of images method}
\begin{compactenum}
\item \emph{Initial phase:} compute all solutions of $f(z) = \eta_1$.

\item \emph{Transport phase:} while $\eta_k \neq 0$ do
\begin{compactenum}
\item \emph{Prediction:} choose $\eta_{k+1} \in \C$ and construct a minimal set 
of 
initial points $S_{k+1}$ from the solutions of $f(z) = \eta_k$ and from the 
caustics.

\item \emph{Correction:} apply Newton's method to $f- \eta_{k+1}$ 
with the set of initial points $S_{k+1}$ to get all solutions of $f(z) = 
\eta_{k+1}$. 
\end{compactenum}
\end{compactenum}
\bigskip

The points $\eta_1, \eta_2, \ldots, \eta_n = 0$ conceptually form a path in 
$\C$ (see Figure~\ref{fig:transportpath}).
From this perspective
the transport of images method discretizes the tracing of the solution curves 
of $f(z) = \eta(t)$, where the right hand side is a path $\eta : \cc{a, b} \to 
\C$.
We describe the homotopy curves for a general harmonic mapping in 
Section~\ref{sect:homotopycurves} in full detail.

\begin{figure}
\begin{center}
\begin{tikzpicture}
\usetikzlibrary{positioning,arrows}
\tikzset{arrow/.style={-latex}}

\draw[fill] (1,2) circle (2pt) node[above] {};
\draw[fill] (0,1) circle (2pt) node[below] {$\eta_1$};
\draw[fill] (2,1.5) circle (2pt) node[above] {$\eta_k$};
\draw[fill] (3.3,1.5) circle (2pt) node[above] {$\eta_{k+1}$};
\draw[fill] (4,1.5) circle (2pt) node[above] {};
\draw[fill] (5,1) circle (2pt) node[above] {$\eta_\ell$};
\draw[fill] (5.7,0.3) circle (2pt) node[left] {$\eta_{\ell+1}$};
\draw[fill] (6,0) circle (2pt) node[above] {};
\draw[fill] (6.5,0.25) circle (2pt) node[above] {$\eta_{m}$};
\draw[fill] (7.5,0.75) circle (2pt) node[above] {$\eta_{m+1}$};
\draw[fill] (8,1) circle (2pt) node[above] {};
\draw[fill] (9.2,0) circle (2pt) node[above] {};
\draw[fill] (10,0) circle (2pt) node[above] {$\eta_n$};

\draw (4,0)node[below] {caustics};
\draw[densely dotted] plot coordinates {(3.3,-0.1) (2.5,0.5)};
\draw[densely dotted] plot coordinates {(4.8,-.3) (6.8,-.5)};

\draw [thick] (3.4,2.5) arc (130:180:3);

\draw [thick] (6.9,-0.7) arc (-20:30:3);

\draw[dashed] plot coordinates {(0,1) (1,2) (2,1.5) (4,1.5) (5,1) (6,0) (8,1) 
(9.2,0) (10,0)};

\draw [arrow] (3.3,1.3) to  (2,1.3) node[below] {$c$};
\draw [arrow] (6.6,0.05) to  (7.6,0.55) node[below] {$c$};

\end{tikzpicture}
\end{center}
\caption{A path from $\eta_1$ to $\eta_n$ with the three kinds of steps in the 
transport of images method:
(1) $f(z) = \eta_{k+1}$ has two solutions fewer than $f(z) = \eta_k$,
(2) the number of solutions for $\eta_\ell$ and $\eta_{\ell+1}$ coincides
and (3) $f(z) = \eta_{m+1}$ has two solutions more than $f(z) = \eta_m$.
}
\label{fig:transportpath}
\end{figure}
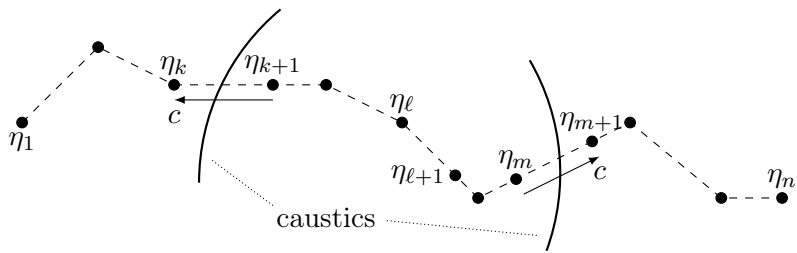

\subsection{Newton's method as corrector}\label{sect:harmonic_newton}

For a harmonic mapping $f: \Omega \to \C$ the \emph{harmonic Newton map} is
\begin{equation} \label{eqn:newton_map}
H_f : \Omega \setminus \cC \to \C, \quad
H_f(z) = z - \frac{\conj{h'(z)} f(z) - \conj{g'(z) 
f(z)}}{\abs{h'(z)}^2 - \abs{g'(z)}^2},
\end{equation}
where $f = h + \conj{g}$ is a local decomposition~\eqref{eqn:local_decomp}
and the \emph{harmonic Newton iteration} with initial point $z_0 \in \Omega 
\setminus \cC$ is
\begin{equation} \label{eqn:harmonic_newton}
z_k = H_f(z_{k-1}) = H_f^k(z_0), \quad k \geq 1.
\end{equation}
We write, as usual, $H_f^k(z) = H_f(H_f^{k-1}(z))$ for $k \geq 1$ and $H_f^0(z) 
= z$.
The iteration~\eqref{eqn:harmonic_newton} is a complex formulation of the 
classical Newton iteration 
in~$\R^2$ \cite[Sect.~3.1]{SeteZur2020} and hence inherits all its properties, 
e.g., local quadratic convergence.

To compute solutions of $f(z) = \eta$ with $\eta \in \C$ we apply the 
iteration~\eqref{eqn:harmonic_newton} to $f - \eta$.
The limit of the harmonic Newton iteration with initial point $z_0$, 
\begin{equation}\label{eqn:correction}
\nm{z_0}{\eta}{f}{\infty} = \lim_{k \to \infty} \nm{z_0}{\eta}{f}{k},
\end{equation}
exists for all $z_0$ in a
neighborhood of a solution $z_* \notin \cC$ of $f(z) = \eta$ by the local 
convergence property of Newton's method.
If $z_* = \nm{z_0}{\eta}{f}{\infty}$ holds we say that $z_*$ \emph{attracts} 
$z_0$ under $H_{f-\eta}$ or that $z_0$ \emph{is attracted} by $z_*$ under 
$H_{f-\eta}$.

\begin{definition}
For a harmonic mapping $f$ we call a set $S$ 
a \emph{(minimal) prediction set} of $f^{-1}(\{ \eta \})$ if every 
solution of $f(z) = \eta$ attracts exactly one point in $S$ under $H_{f-\eta}$, 
i.e., if $\nm{S}{\eta}{f}{\infty} = f^{-1}(\{ \eta \})$,
and $\abs{S} = \abs{f^{-1}(\{ \eta \})}$,
where $\abs{X}$ denotes the cardinality of a set $X$.
\end{definition}

For a prediction set $S$ the map $H_{f - \eta}^\infty$ is a bijection
from $S$ to $f^{-1}( \{ \eta \})$.
In this sense every point $z_0 \in S$ represents exactly one solution of $f(z) 
= \eta$.
In~\cite{Smale1986} the point $z_0$ is called an \emph{approximate zero} of
$f - \eta$ if the Newton iterates additionally satisfy
$\abs{z_k - z_{k-1}} \leq (\frac{1}{2})^{2^{k-1}-1} \abs{z_1 - z_0}$.

\subsection{Initial phase}
\label{sect:initial_phase}

For non-degenerate harmonic mappings the initial phase can be realized as 
follows.  For sufficiently large $\abs{\eta}$ all solutions of $f(z) = \eta$ 
are close to the poles of $f$ and, in the notation~\eqref{eqn:near_pole} 
or~\eqref{eqn:near_pole_infty}, there are exactly $n$ solutions close to the 
pole $z_j$~\cite[Thm.~3.6]{SeteZur2019b}.  We compute these solutions with 
the harmonic Newton iteration and the initial points from the next theorem.
A similar result holds if $\infty$ is a pole of $f$, 
generalizing~\cite[Cor.~4.5]{SeteZur2020}.
All initial points together give a prediction set of $f^{-1}(\{\eta\})$.

\begin{theorem} \label{thm:zeros_at_poles}
Let $f$ be harmonic in $D = \{ z \in \C : 0 < \abs{z - z_0} < r \}$ with
\begin{equation*}
f(z) = \sum_{k=-n}^\infty a_k (z-z_0)^k + 
\conj{\sum_{k=-n}^\infty b_k (z-z_0)^k} + 2 A \log \abs{z - z_0}, \quad z \in D,
\end{equation*}
where $n \geq 1$ and $\abs{a_{-n}} \neq \abs{b_{-n}}$.
Suppose that $c = \eta - (a_0 + \conj{b}_0) \neq 0$ and
let $\zeta_1, \ldots, \zeta_n$ be the $n$ solutions of
\begin{equation} \label{eqn:init_at_pole}
(\zeta - z_0)^n = \frac{\abs{a_{-n}}^2 - \abs{b_{-n}}^2}{\conj{a}_{-n} c - 
\conj{b}_{-n} \conj{c}}.
\end{equation}
We then have for sufficiently large $\abs{c}$, i.e., for sufficiently large 
$\abs{\eta}$:
\begin{enumerate}
\item There exist exactly $n$ solutions of $f(z) = \eta$ near $z_0$.
\item The set $S = \{ \zeta_1, \ldots, \zeta_n \}$ is a prediction set for the 
solutions in 1.
\end{enumerate}
\end{theorem}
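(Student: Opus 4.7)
The plan is to first derive equation~\eqref{eqn:init_at_pole} as the leading-order approximation of $f(z) = \eta$ near the pole $z_0$, then invoke \cite[Thm.~3.6]{SeteZur2019b} for item~1 and deduce item~2 from quadratic convergence of the harmonic Newton iteration near each true solution. Write $w = z - z_0$ and rewrite $f(z) = \eta$ as
\begin{equation*}
L(w^{-n}) + R(w) = c, \qquad L(u) \coloneq a_{-n} u + \overline{b_{-n}}\,\overline{u},
\end{equation*}
where $R(w)$ collects the lower-order principal parts of orders $1,\ldots,n-1$, the logarithmic term $2A\log\abs{w}$, and the analytic regular part of $f$. The assumption $\abs{a_{-n}} \neq \abs{b_{-n}}$ means the $\R$-linear map $L$ is invertible with determinant $\abs{a_{-n}}^2 - \abs{b_{-n}}^2$; solving $L(u) = c$ via the conjugated $2 \times 2$ system yields
\begin{equation*}
u_0 \coloneq L^{-1}(c) = \frac{\overline{a_{-n}}\, c - \overline{b_{-n}}\,\overline{c}}{\abs{a_{-n}}^2 - \abs{b_{-n}}^2},
\end{equation*}
and the $n$ points $\zeta_j - z_0$ are by definition the $n$ $n$-th roots of $1/u_0$, i.e., precisely the solutions of~\eqref{eqn:init_at_pole}.

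For item~1 I would simply cite \cite[Thm.~3.6]{SeteZur2019b}, which already asserts that for $\abs{\eta}$ large every solution of $f(z) = \eta$ is clustered near a pole, with exactly $n$ solutions $z_j^\star$ near $z_0$ (counted with the natural multiplicity $n$ of the pole). Alternatively this can be proved directly by a contraction-mapping argument on the fixed-point formulation $w = L^{-1}(c - R(w))^{-1/n}$ after fixing an $n$-th root branch close to a chosen $\zeta_j - z_0$.

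For item~2, the core estimate is that each $\zeta_j$ is very close (relatively) to a unique true solution $z_j^\star$. Any solution $z^\star = z_0 + w^\star$ satisfies $L((w^\star)^{-n}) = c - R(w^\star)$. Since $\abs{R(w^\star)} = O(\abs{w^\star}^{-(n-1)}) + O(\abs{\log \abs{w^\star}})$ while $\abs{u_0}$ grows linearly in $\abs{c}$ and $\abs{w^\star} \asymp \abs{c}^{-1/n}$, one obtains
\begin{equation*}
\abs{(w^\star)^{-n} - u_0} = O\bigl(\abs{c}^{(n-1)/n}\bigr) = o(\abs{u_0}),
\end{equation*}
and taking $n$-th roots gives $\abs{w^\star - (\zeta_j - z_0)} = o(\abs{\zeta_j - z_0})$ after matching $\zeta_j$ to the nearest root. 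In particular the $n$ true solutions $z_j^\star$ are pairwise distinct and closer to their associated $\zeta_j$ than to each other, so the assignment $\zeta_j \mapsto z_j^\star$ is a bijection onto the $n$ solutions of $f(z) = \eta$ near $z_0$. It remains to check that $\zeta_j$ lies in the basin of attraction of $z_j^\star$ under $H_{f-\eta}$; because $\abs{\omega(z_j^\star)} \to \abs{b_{-n}/a_{-n}} \neq 1$ as $\abs{c} \to \infty$, the Jacobian $J_{f-\eta}(z_j^\star)$ is bounded away from $0$ relative to the problem scale, so the standard Newton–Kantorovich style estimate for harmonic mappings (cf.~\cite{SeteZur2020}) shows that the quadratic convergence ball around $z_j^\star$ has radius at least of order $\abs{z_j^\star - z_0}$, which comfortably contains $\zeta_j$ by the $o(\cdot)$-estimate above.

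The main obstacle is this last Newton-convergence step: one has to make the size of the attraction basin of $z_j^\star$ quantitative enough to compare with the explicit distance $\abs{\zeta_j - z_j^\star}$, despite the fact that the problem degenerates as $\abs{c} \to \infty$ (the solutions collapse onto the pole). The cleanest way around this is to perform the whole analysis in the rescaled variable $u = w^{-n}$: in those coordinates the equation is a uniformly controlled perturbation of the fixed linear equation $L(u) = c$, so both the existence of the solution and the contraction estimate for Newton's method become routine, and the conclusion for $w$ follows by pulling the estimates back through the $n$-th power map.
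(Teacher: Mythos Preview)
Your proposal is correct and follows the same underlying strategy as the paper, which is essentially a reduction to~\cite[Thm.~4.3]{SeteZur2020}: the paper simply cites that result for $A=0$ and remarks that for $A\neq 0$ the same proof goes through upon replacing $h'$ by $\partial_z f$ and $g'$ by $\conj{\partial_{\conj z} f}$, with Rouch\'e's theorem (as in the proof of~\cite[Thm.~3.6]{SeteZur2019b}) supplying the upper bound of $n$ zeros. What you have written is, in effect, a sketch of that cited proof---the leading-order linearization via the invertible $\R$-linear map $L$, the $o(\cdot)$-closeness of $\zeta_j$ to the true solution, and the Newton--Kantorovich ball estimate after rescaling---so the mathematical content matches, only the level of detail differs; the one genuine (minor) deviation is that you obtain the ``at most $n$'' part from the separation/closeness estimates rather than from Rouch\'e.
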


\begin{proof}
If $A = 0$ then $f - \eta$ has $n$ distinct zeros near $z_0$, which satisfy 
2.\ by~\cite[Thm.~4.3]{SeteZur2020}.  For $A \neq 0$ the proof closely 
follows the proof of~\cite[Thm.~4.3]{SeteZur2020} if one replaces $h'$ by 
$\partial_z f$, and $g'$ by $\conj{\partial_{\conj{z}} f}$.
As in the proof of~\cite[Thm.~3.6]{SeteZur2019b},
Rouch\'e's theorem (e.g.~\cite[Thm.~2.3]{SeteLuceLiesen2015a}) implies that
$f - \eta$ cannot have more than $n$ zeros in a neighborhood of $z_0$.
\end{proof}

\subsection{Transport phase}

We analyze the prediction-correction scheme of the transport phase and prove 
that the transport phase can be realized with a finite number of steps.
We begin with a single step and
investigate how all solutions of $f(z) = \eta_2$ can be determined, given 
all solutions of $f(z) = \eta_1$.
First we prove that a solution of $f(z) = \eta_1$ that is not in $\cC$
is attracted by a solution of $f(z) = \eta_2$ under the harmonic Newton map 
$H_{f-\eta_2}$ if $\abs{\eta_2-\eta_1}$ is small enough.

\begin{lemma} \label{lem:transport1}
Let $f : \Omega \to \C$ be a harmonic mapping, $z_0 \in \Omega \setminus \cC$ 
and $\eta_1 = f(z_0)$.
Then there exist $\eps > 0$ and $\delta > 0$ such that for each $\eta_2 \in 
D_\eps(\eta_1)$ there exists a unique $z_* \in D_\delta(z_0)$ with $f(z_*) = 
\eta_2$.
Moreover, $z_*$ attracts $z_0$ under the harmonic Newton map 
$H_{f-\eta_2}$, i.e., $\nm{z_0}{\eta_2}{f}{\infty} = z_*$.
\end{lemma}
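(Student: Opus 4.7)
The plan is to combine the inverse function theorem with a uniform contraction argument for the harmonic Newton map near $z_0$.

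First, since $z_0 \notin \cC$ one has $J_f(z_0) \neq 0$, so $f$, viewed as a $C^1$ map $\R^2 \to \R^2$, has a non-degenerate differential at $z_0$. The inverse function theorem yields a $C^1$ diffeomorphism from a neighborhood of $z_0$ onto a neighborhood of $\eta_1$. In particular, there exist $\eps_0, \delta_0 > 0$ with $\overline{D_{\delta_0}(z_0)} \subset \Omega \setminus \cC$ (using that $\cC$ is closed and $z_0 \notin \cC$) such that for every $\eta_2 \in D_{\eps_0}(\eta_1)$ there is a unique $z_* = z_*(\eta_2) \in D_{\delta_0}(z_0)$ with $f(z_*) = \eta_2$, and $z_*(\eta_2) \to z_0$ continuously as $\eta_2 \to \eta_1$. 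This settles the existence and uniqueness part of the lemma.

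Next, I would show that $z_*$ attracts $z_0$ under $H_{f-\eta_2}$ by turning $H_{f-\eta_2}$ into a contraction on a fixed small disk around $z_0$, uniformly in $\eta_2$. At any non-critical zero $w$ of $f - \eta$, the map $H_{f-\eta}$ has $w$ as a super-attracting fixed point: translating the complex formula \eqref{eqn:newton_map} to its $\R^2$ version, the $\R^2$-differential $DH_{f-\eta}(w)$ vanishes (this is the classical super-convergence of Newton at a simple zero and is the reason for local quadratic convergence, see Section~\ref{sect:harmonic_newton}). The map $(\eta, z) \mapsto H_{f-\eta}(z)$ is jointly continuous in $(\eta, z)$ on $\C \times (\Omega \setminus \cC)$, and its partial differential with respect to $z$ is continuous there as well, since the denominator $J_f(z) = \abs{h'(z)}^2 - \abs{g'(z)}^2$ is bounded away from zero on the compact set $\overline{D_{\delta_0}(z_0)}$.

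Consequently, given any $q \in (0,1)$, I can pick $\delta \in (0, \delta_0]$ and $\eps \in (0, \eps_0]$ such that on the product set $\overline{D_\eps(\eta_1)} \times \overline{D_\delta(z_0)}$ the operator norm $\|DH_{f-\eta_2}(z)\|$ stays below $q$, which by the mean value inequality makes $H_{f-\eta_2}$ a $q$-contraction on $\overline{D_\delta(z_0)}$. Shrinking $\eps$ further (using $z_*(\eta_2) \to z_0$) ensures $z_*(\eta_2) \in D_\delta(z_0)$ and that $H_{f-\eta_2}$ maps $\overline{D_\delta(z_0)}$ into itself. Banach's fixed point theorem then gives $H_{f-\eta_2}^k(z_0) \to z_*$, i.e., $\nm{z_0}{\eta_2}{f}{\infty} = z_*$.

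The main obstacle is the uniform-in-$\eta_2$ character of the contraction: one must choose $\eps$ and $\delta$ simultaneously so that, for every $\eta_2 \in D_\eps(\eta_1)$, the Newton map $H_{f-\eta_2}$ both stabilizes $\overline{D_\delta(z_0)}$ and contracts it. This is precisely where the joint continuity of $H_{f-\eta}$ and its $z$-derivative on a compact neighborhood disjoint from~$\cC$, together with $DH_{f-\eta_1}(z_0) = 0$, is used.
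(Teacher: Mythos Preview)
Your argument is correct and takes a genuinely different route from the paper. The paper invokes the Newton--Kantorovich theorem: for each non-critical $\zeta$ there is a radius $\rho(\zeta) > 0$, depending continuously on $\zeta$, such that $\zeta$ attracts all of $D_{\rho(\zeta)}(\zeta)$ under $H_{f - f(\zeta)}$; taking $m = \min_{\zeta \in K} \rho(\zeta) > 0$ over a compact neighborhood $K = \overline{D_r(z_0)}$ and then $\delta \le \min\{r, m\}$ gives $|z_0 - z_*| < \delta \le \rho(z_*)$, which places $z_0$ in the basin of $z_*$ with no further work. You instead exploit $DH_{f-\eta_1}(z_0) = 0$ together with the joint continuity of $(\eta, z) \mapsto DH_{f-\eta}(z)$ to turn $H_{f-\eta_2}$ into a uniform $q$-contraction on $\overline{D_\delta(z_0)}$ and then apply Banach's fixed-point theorem. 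The paper's approach is shorter because it outsources the quantitative estimate to Newton--Kantorovich; yours is more self-contained and makes no appeal to that theorem. One small point worth tightening: the self-map claim $H_{f-\eta_2}(\overline{D_\delta(z_0)}) \subseteq \overline{D_\delta(z_0)}$ deserves a line of justification---it holds once $\eps$ is shrunk so that $|z_*(\eta_2) - z_0| \le \tfrac{1-q}{1+q}\,\delta$, since then $|H_{f-\eta_2}(z) - z_0| \le q(\delta + |z_* - z_0|) + |z_* - z_0| \le \delta$ for every $z \in \overline{D_\delta(z_0)}$.
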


\begin{proof}
Let $\zeta \in \Omega \setminus \cC$ and consider $f - f(\zeta)$.  The 
Newton--Kantorovich theorem yields a radius $\rho(\zeta) > 0$ so that $\zeta$ 
attracts all points in $D_{\rho(\zeta)}(\zeta)$ under the harmonic Newton map 
of $f - f(\zeta)$; see~\cite[Thm.~2.2, Sect.~4]{SeteZur2020} and references 
therein.  (The basin of attraction of $\zeta$ might be larger.)
Note that $\rho$ depends continuously on $\zeta$.

There exist open neighborhoods $U \subseteq \Omega \setminus \cC$ of $z_0$ and 
$V$ of $\eta_1$ such that $f : U \to V$ is a diffeomorphism (inverse function 
theorem).
Let $r > 0$ with $K = \overline{D_r(z_0)} \subseteq U$ and let $m = \min_{z 
\in K} \rho(z)$.
Then $m > 0$ since $K$ is compact and $\rho$ is continuous with $\rho(z) > 0$ 
for $z \in U$.
Let $0 < \delta \leq \min \{ r, m \}$.  Then $z_* \in D_\delta(z_0)$ satisfies 
$\abs{z_* - z_0} < \delta \leq m \leq \rho(z_*)$, hence 
$\nm{z_0}{f(z_*)}{f}{\infty} = z_*$.
Finally, there exists $\eps > 0$ with $D_\eps(\eta_1) \subseteq 
f(D_\delta(z_0))$.
\end{proof}

By Theorem~\ref{thm:relative_counting}, two points in the same caustic tile have 
the same number of pre-images.  Moreover, $\eta \in \C \setminus
f(\cC)$ has only finitely many pre-images, and these are in $\widehat{\C} 
\setminus \cC$.
Thus, we can apply Lemma~\ref{lem:transport1} to all solutions simultaneously 
to obtain the next theorem.

\begin{theorem} \label{thm:correction_regular}
Let $f$ be a non-degenerate harmonic mapping on $\widehat{\C}$ and let 
$\eta_1 \in \C \setminus f(\cC)$.  Then there exists an $\eps > 0$ such that 
$S = f^{-1}(\{ \eta_1 \})$ is a prediction set of $f^{-1}(\{ \eta_2 \})$ for 
all $\eta_2 \in D_{\eps}(\eta_1) \subseteq \C \setminus f(\cC)$.
\end{theorem}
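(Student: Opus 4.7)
The plan is to localize: apply Lemma~\ref{lem:transport1} around each pre-image of $\eta_1$ and then use the fact that the cardinality of $f^{-1}(\{\eta\})$ is constant across a caustic tile (Theorem~\ref{thm:relative_counting}) to promote the collection of local attractions into a full prediction set.

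First, $S = f^{-1}(\{\eta_1\})$ is finite and disjoint from $\cC$: finiteness follows from Theorem~\ref{thm:relative_counting} (since $\eta_1 \notin f(\cC)$), and $S \cap \cC = \emptyset$ because any $z_0 \in S \cap \cC$ would place $\eta_1 = f(z_0)$ in $f(\cC)$. Enumerate $S = \{z^{(1)}, \ldots, z^{(N)}\}$.

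Next I would apply Lemma~\ref{lem:transport1} at each $z^{(j)}$ to obtain radii $\eps_j, \delta_j > 0$ such that for every $\eta_2 \in D_{\eps_j}(\eta_1)$ there is a unique $w^{(j)}(\eta_2) \in D_{\delta_j}(z^{(j)})$ with $f(w^{(j)}(\eta_2)) = \eta_2$, and this $w^{(j)}(\eta_2)$ attracts $z^{(j)}$ under $H_{f-\eta_2}$. Because $S$ is finite, I can shrink each $\delta_j$ (and correspondingly each $\eps_j$) so that the disks $D_{\delta_j}(z^{(j)})$, $j = 1, \ldots, N$, are pairwise disjoint. Then choose $\eps > 0$ with $\eps \le \min_j \eps_j$ and $D_\eps(\eta_1) \cap f(\cC) = \emptyset$, which is possible because $f(\cC)$ is compact in $\C$ (discussed in Section~\ref{sect:preliminaries}) and does not contain $\eta_1$.

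For any $\eta_2 \in D_\eps(\eta_1)$ the points $w^{(1)}(\eta_2), \ldots, w^{(N)}(\eta_2)$ are $N$ pairwise distinct pre-images of $\eta_2$ (distinct because the disks are disjoint). Since $\eta_1$ and $\eta_2$ lie in the same caustic tile, Theorem~\ref{thm:relative_counting}(1) yields $|f^{-1}(\{\eta_2\})| = |f^{-1}(\{\eta_1\})| = N$. Hence $\{w^{(j)}(\eta_2)\}_{j=1}^{N} = f^{-1}(\{\eta_2\})$, and $z^{(j)} \mapsto w^{(j)}(\eta_2) = H_{f-\eta_2}^\infty(z^{(j)})$ is the required bijection $S \to f^{-1}(\{\eta_2\})$; equivalently, $S$ is a minimal prediction set.

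The only real obstacle is the bookkeeping of constants: the pair $(\eps_j, \delta_j)$ from Lemma~\ref{lem:transport1} is coupled, so I must first shrink the $\delta_j$'s to enforce disjointness of the local disks, then shrink each $\eps_j$ to match, and finally shrink $\eps$ once more to stay inside the caustic tile of $\eta_1$. The structural heart of the argument is the cardinality identity of Theorem~\ref{thm:relative_counting}(1), which upgrades the $N$ locally produced pre-images to \emph{all} pre-images of $\eta_2$ without any further analysis; this is what makes the proof short and clean.
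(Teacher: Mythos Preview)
Your proposal is correct and follows essentially the same approach as the paper: the paper's proof is the short paragraph immediately preceding the theorem, which says to apply Lemma~\ref{lem:transport1} at each of the finitely many pre-images (noting they avoid $\cC$) and to use Theorem~\ref{thm:relative_counting} for the equality of cardinalities. Your write-up simply spells out the bookkeeping (shrinking the $\delta_j$'s to make the local disks disjoint so that the $w^{(j)}(\eta_2)$ are distinct, and shrinking $\eps$ to stay in $\C\setminus f(\cC)$) that the paper leaves implicit.
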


By Theorem~\ref{thm:relative_counting}, the number of pre-images of $\eta_1, 
\eta_2$ differs by $2$ if $\eta_1, \eta_2$ are in adjacent caustic tiles 
separated by a single caustic arc.
The two additional solutions can be computed with the next theorem, provided 
that the 
step from $\eta_1$ to $\eta_2$ crosses the caustics in a specific direction.

\begin{theorem}[{\cite[Thm.~4.2]{SeteZur2019b}}] \label{thm:zero_at_crit}
Let $f : \Omega \to \C$ be a harmonic mapping and $z_0 \in \cC \setminus 
\cM$, such that 
$\eta = f(z_0)$ is a fold caustic point.  Moreover, let
\begin{equation} \label{eqn:c}
f(z) = \sum_{k=0}^\infty a_k (z-z_0)^k + \conj{\sum_{k=0}^\infty b_k (z-z_0)^k}
\quad \text{and }
c = - \left( \frac{a_2 \conj{b}_1}{a_1} + \frac{\conj{b}_2 a_1}{\conj{b}_1} 
\right).
\end{equation}
Then, for each sufficiently small $\eps > 0$, there exists a $\delta > 0$ such 
that for all $0 < t < \delta$:
\begin{enumerate}
\item $f(z) = \eta - t c$ has no solution in $D_\eps(z_0)$,
\item $f(z) = \eta$ has exactly one solution in $D_\eps(z_0)$,
\item $f(z) = \eta + t c$ has exactly two solutions in $D_\eps(z_0)$.
\end{enumerate}
Moreover, each solution in 3.\ attracts one of the points
\begin{equation}
z_\pm = z_0 \pm i \sqrt{t \conj{b}_1/a_1}\label{eqn:zpm}
\end{equation}
under the harmonic Newton map $H_{f - (\eta + tc)}$
if $t > 0$ is sufficiently small.
\end{theorem}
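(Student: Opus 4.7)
The approach is a local Morse-type analysis of $f$ near the fold point $z_0$, combined with a Newton--Kantorovich argument for the attraction claim. After translating $z_0$ to the origin, the Taylor expansion in~\eqref{eqn:c} reads $f(z_0 + w) - \eta = L(w) + Q(w) + O(\abs{w}^3)$, where $L(w) = a_1 w + \conj{b_1 w}$ and $Q(w) = a_2 w^2 + \conj{b_2 w^2}$. Since $z_0 \in \cC \setminus \cM$, one has $\abs{a_1} = \abs{b_1} \neq 0$, so the real-linear map $L : \C \to \C$ has rank one. Its kernel is spanned by $\alpha \coloneq i\sqrt{\conj{b_1}/a_1}$, which is precisely the unit-length direction of $z_\pm - z_0$ in~\eqref{eqn:zpm}, and the image of $L$ is a real line in $\C$ transverse to $c$, which is what makes crossing in the direction of $c$ meaningful.

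Next, I would solve $f(z_0 + w) = \eta + \tau c$ for small real $\tau$ by writing $w = s\alpha + u\beta$ with $s, u \in \R$ and $\beta \in \C$ not a real multiple of $\alpha$, and splitting the equation into its component along $\R \cdot L(\beta)$ (the image of $L$) and a transverse real component containing $c$. The image-line component is linear in $u$ to leading order, so the implicit function theorem provides a smooth $u = u(s, \tau)$ with $u = O(\abs{\tau} + s^2)$. Substituting back reduces the transverse component to a scalar equation whose principal term is $Q(s\alpha) = s^2 (a_2 \alpha^2 + \conj{b_2 \alpha^2})$. A direct calculation using $\alpha^2 = -\conj{b_1}/a_1$ together with the identity $a_1/\conj{b_1} = b_1/\conj{a_1}$ (a consequence of $\abs{a_1} = \abs{b_1}$) yields $a_2 \alpha^2 + \conj{b_2 \alpha^2} = c$, so the scalar equation becomes $s^2 = \tau + o(\tau)$. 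From this, items 1--3 follow at once: $\tau = -t < 0$ admits no real solution in a small disk, $\tau = 0$ admits only $w = 0$, and $\tau = +t > 0$ admits exactly two solutions with $s = \pm\sqrt{t} + o(\sqrt{t})$.

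For the Newton attraction claim in item 3, I would write the two exact solutions as $z_*^\pm = z_0 + (\pm\sqrt{t} + o(\sqrt{t}))\alpha + u(\pm\sqrt{t}, t)\beta = z_\pm + O(t)$, so that $\abs{z_*^\pm - z_\pm}$ is of strictly smaller order than $\abs{z_\pm - z_0} \asymp \sqrt{t}$. Since $\cC$ is an analytic curve through $z_0$ and $z_*^\pm$ moves off $\cC$ at order $\sqrt{t}$ in a transverse direction, $\abs{J_f(z_*^\pm)} \asymp \sqrt{t}$ for small $t > 0$. A quantitative Newton--Kantorovich estimate of the same flavour as the one invoked in the proof of Lemma~\ref{lem:transport1} then produces a disk around $z_*^\pm$ of radius bounded below by a positive constant times $\abs{J_f(z_*^\pm)}$, inside which the harmonic Newton iteration converges to $z_*^\pm$. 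For $t$ small enough this disk contains $z_\pm$, establishing the attraction property.

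The main technical obstacle I anticipate is the uniform Newton--Kantorovich estimate in the last step: the Jacobian degenerates as $t \to 0^+$, so the basin of attraction of $z_*^\pm$ shrinks, and one has to verify that the approximation error $\abs{z_*^\pm - z_\pm} = O(t)$ is beaten by the basin radius of order $\sqrt{t}$. The algebraic identity $a_2 \alpha^2 + \conj{b_2 \alpha^2} = c$, while only a short computation, is also the essential link that ties the local Taylor data at the fold to the specific crossing direction $c$ defined in~\eqref{eqn:c}.
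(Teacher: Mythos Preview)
The paper does not give its own proof of Theorem~\ref{thm:zero_at_crit}; the result is quoted from the authors' companion papers (the header cites~\cite[Thm.~4.2]{SeteZur2019b}, and the remark immediately after the statement refers to~\cite[Thm.~4.2]{SeteZur2020} for the version with the Newton attraction claim). There is therefore no in-paper argument to compare your proposal against.

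That said, your Lyapunov--Schmidt/Morse reduction is the natural approach and the computations you sketch are correct: $\alpha = i\sqrt{\conj{b}_1/a_1}$ does span $\ker L$, and the identity $a_2\alpha^2 + \conj{b_2\alpha^2} = c$ follows exactly as you indicate from $\abs{a_1} = \abs{b_1}$. Two places where you implicitly use the fold hypothesis $\psi(t_0)\neq 0$ without saying so deserve a sentence each. First, your splitting into ``image of $L$'' and ``transverse component containing $c$'' only works if $c \notin \im L$; since $\im L$ is the tangent line to the caustic at $\eta$ (indeed $L(\gamma'(t_0)) = \tau(t_0)$), this transversality is exactly what the fold condition supplies. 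Second, for the Newton--Kantorovich step you need $\alpha$ transverse to the critical curve at $z_0$ so that $\abs{J_f(z_\pm)} \asymp \sqrt{t}$; this is the Whitney fold condition (kernel of $df$ transverse to the singular set), again equivalent to $\psi(t_0)\neq 0$. With those two remarks made explicit, your quantitative balance---residual $g(z_\pm) = O(t^{3/2})$, $\norm{Dg(z_\pm)^{-1}} \asymp t^{-1/2}$, hence Kantorovich quantity $O(\sqrt{t})$---is exactly right and closes the argument.
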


Note that $f$ is \emph{light} (i.e., $f^{-1}(\{ \eta \})$ is either totally 
disconnected or empty for all $\eta \in \C$) in a neighborhood of $z_0$ if 
$f(z_0)$ is a 
fold~\cite[Rem.~2.3]{SeteZur2019b}; hence we omit this assumption 
from~\cite[Thm.~4.2]{SeteZur2020}.
We emphasize that $c$ is not necessarily a normal vector to the caustics.

With Theorem~\ref{thm:zero_at_crit}, we construct a prediction 
set of $f ^{-1}(\{ \eta_2 \})$ if the step from $\eta_1$ to $\eta_2$ 
crosses a caustic at a \emph{simple fold} $\eta$, i.e., $\abs{f^{-1}(\{ \eta \}) 
\cap \cC} = 1$.

\begin{theorem}\label{thm:correction_singular}
Let $f$ be a non-degenerate harmonic mapping on $\widehat{\C}$.
Let $\eta_1, \eta_2 \in \C \setminus f(\cC)$ such that there exists exactly 
one simple fold $\eta$ and no other caustic point on the line 
segment from $\eta_1$ to $\eta_2$, and such that $\arg(\eta_{2} - \eta_1) = 
\arg(\pm c)$, where $c$ is defined as in Theorem~\ref{thm:zero_at_crit}.
Then for small enough $\abs{\eta_2 - \eta_1}$,
\begin{equation}\label{eqn:Sk_cross}
S = \begin{cases}f^{-1}(\{\eta_1\}) \cup\{z_\pm \}, & \text{ if }
\arg(\eta_{2} - \eta_1) = \arg(+c), \\ 
f^{-1}(\{\eta_1\})\setminus\nm{ \{z_\pm \} }{\eta_1}{f}{\infty}, & \text{ 
if } \arg(\eta_{2} - \eta_1) = \arg(-c) ,\end{cases}
\end{equation}
is a prediction set of $f^{-1}(\{ \eta_2 \})$,
with $z_\pm$ from~\eqref{eqn:zpm}.
\end{theorem}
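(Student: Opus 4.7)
The plan is to combine the local fold analysis of Theorem~\ref{thm:zero_at_crit} near the unique critical pre-image $z_0 \in \cC$ of the simple fold $\eta$ with Lemma~\ref{lem:transport1} applied at each remaining (regular) pre-image of $\eta_1$. Throughout, write $\eta_1 = \eta \mp \tau_1 c$ and $\eta_2 = \eta \pm \tau_2 c$ in the two cases, with $\tau_1, \tau_2 > 0$.

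Apply Theorem~\ref{thm:zero_at_crit} at $z_0$ to obtain $\eps, \delta_0 > 0$ such that for all $0 < t < \delta_0$, $f(z) = \eta - tc$ has no pre-image in $D_\eps(z_0)$, while $f(z) = \eta + tc$ has exactly two pre-images there, attracting $z_0 \pm i\sqrt{t\conj{b}_1/a_1}$ under the corresponding Newton maps. Let $F = f^{-1}(\{\eta_1\}) \setminus \overline{D_\eps(z_0)}$; since $\eta_1 \notin f(\cC)$, each $w \in F$ lies in $\widehat{\C} \setminus \cC$, and Lemma~\ref{lem:transport1} supplies radii $\eps_w, \delta_w > 0$. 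Shrink the $\delta_w$ so that the closed disks $\overline{D_{\delta_w}(w)}$ are pairwise disjoint and disjoint from $\overline{D_\eps(z_0)}$, and require $\abs{\eta_2 - \eta_1}$ small enough that $\tau_1, \tau_2 < \delta_0$ and $\abs{\eta_2 - \eta_1} < \min_{w \in F} \eps_w$.

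In Case~1 ($\arg(\eta_2 - \eta_1) = \arg(+c)$), item~(1) of Theorem~\ref{thm:zero_at_crit} gives $F = f^{-1}(\{\eta_1\})$, and item~(3) yields two pre-images of $\eta_2$ in $D_\eps(z_0)$ that attract $z_\pm$ (with $t = \tau_2$). Together with Lemma~\ref{lem:transport1}, every element of $S = f^{-1}(\{\eta_1\}) \cup \{z_\pm\}$ is attracted under $H_{f - \eta_2}$ by a distinct pre-image of $\eta_2$, giving $\abs{F} + 2$ pre-images. Theorem~\ref{thm:relative_counting}(2) forces $N_{\eta_2}(f) \in \{N_{\eta_1}(f) \pm 2\}$, so this lower bound is tight; hence $S$ attracts precisely $f^{-1}(\{\eta_2\})$ and is a prediction set.

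Case~2 ($\arg(\eta_2 - \eta_1) = \arg(-c)$) is handled symmetrically. Item~(3) applied at $\eta_1 = \eta + \tau_1 c$ identifies the two pre-images of $\eta_1$ inside $D_\eps(z_0)$ as precisely $H_{f-\eta_1}^\infty(\{z_\pm\})$ (with $t = \tau_1$), so $F = f^{-1}(\{\eta_1\}) \setminus H_{f-\eta_1}^\infty(\{z_\pm\})$ agrees with the claimed set. Item~(1) applied at $\eta_2 = \eta - \tau_2 c$ places all pre-images of $\eta_2$ outside $D_\eps(z_0)$. Running the Case~1 count in the reverse direction (from $\eta_2$ to $\eta_1$, now a $+c$-crossing) yields $N_{\eta_2}(f) = N_{\eta_1}(f) - 2 = \abs{F}$, and Lemma~\ref{lem:transport1} exhibits exactly this many distinct pre-images of $\eta_2$, one attracted by each $w \in F$. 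The main obstacle is the simultaneous bookkeeping of neighborhoods---$\eps$, the $\delta_w$, and $\abs{\eta_2 - \eta_1}$ must all be chosen consistently so that the basins of attraction produced by Theorem~\ref{thm:zero_at_crit} and Lemma~\ref{lem:transport1} remain pairwise disjoint; once this is arranged, Theorem~\ref{thm:relative_counting} closes the cardinality count and forces the exhibited attractions to cover all of $f^{-1}(\{\eta_2\})$.
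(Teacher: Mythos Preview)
Your proof is correct and follows essentially the same approach as the paper: combine the local fold analysis of Theorem~\ref{thm:zero_at_crit} near the unique critical pre-image $z_0$ with Lemma~\ref{lem:transport1} at the remaining regular pre-images, and close the cardinality count via Theorem~\ref{thm:relative_counting}. You are somewhat more explicit than the paper about arranging the disjointness of the neighborhoods $D_{\delta_w}(w)$ and $D_\eps(z_0)$ so that the attracted solutions are genuinely distinct, which is a welcome clarification.
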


\begin{proof}
Let $\eta = f(z_0)$ be the unique simple fold on the line segment 
between $\eta_1$ and $\eta_2$.
The step from $\eta_1$ to $\eta_2$ produces either two additional or two fewer 
solutions in a neighborhood of $z_0$, depending on $c$; see 
Theorem~\ref{thm:zero_at_crit}.  The global number of solutions changes 
accordingly; see Theorem~\ref{thm:relative_counting}.

In the case of two additional solutions, i.e., if $\arg(\eta_2 - \eta_1) = 
\arg(+c)$, the set $S$ contains the solutions of $f(z) = \eta_1$, 
and the points $z_\pm$, hence $\abs{S} = \abs{f^{-1}(\{ \eta_2 \})}$ for 
sufficiently small $\abs{\eta_2 - \eta_1}$.
Let $t_1 , t_2 > 0$ such that $\eta_1 = \eta - t_1 c$ and $\eta_2 = \eta + t_2 
c$.  In particular, we have $t_j \le \abs{(\eta_2 - \eta_1)/c}$.
In the disk $D_\eps(z_0)$ from Theorem~\ref{thm:zero_at_crit},
$f(z) = \eta_1$ has no solution and $f(z) = \eta_2$ has exactly two solutions, 
which attract $z_+$ and $z_-$, respectively, if $\abs{\eta_2 - \eta_1}$ and 
hence $t_2$ are small enough.
Since $\eta_1$ is not a caustic point, all solutions of $f(z) = \eta_1$ are 
not in $\cC$.  Hence, every solution of $f(z) = \eta_2$ outside $D_\eps(z_0)$ 
attracts exactly one solution of $f(z) = \eta_1$ by Lemma~\ref{lem:transport1}, 
if $\abs{\eta_2 - \eta_1}$ is sufficiently small.
Together, $S$ is a prediction set of $f^{-1}(\{ \eta_2 \})$.

In the case of two fewer solutions, i.e., if $\arg(\eta_{2} - \eta_1) = \arg(-c)$,  
we remove two points from $f^{-1}( \{ \eta_1 \} )$ to obtain a prediction set.
To determine these points we consider the reversed step from $\eta_2$ to 
$\eta_1$.  This step gives the two additional solutions 
$\nm{z_\pm}{\eta_1}{f}{\infty}$, which we remove from $f^{-1}(\{ \eta_1 \})$ to 
get $S$.
Each solution of $f(z) = \eta_2$ attracts exactly one point in $S$, as above.
\end{proof}

\begin{remark} \label{rem:multiple_caustic_point}
Let $\eta \in f(\cC)$ be a (multiple) fold caustic point, i.e., for each $z_j 
\in \{ z_1, \ldots, z_m \} = f^{-1}(\{ \eta \}) \cap \cC$ the tangent $\tau_j$ 
from~\eqref{eqn:tangent_to_caus} is non-zero.
Then, for every $d \in \C$ with $\im(\conj{\tau}_j d) \neq 0$ for $j = 1, 
\ldots, m$, the effect of Theorem~\ref{thm:zero_at_crit} happens at all 
points $z_1, \ldots, z_m$ simultaneously.
By~\cite[Rem.~4.3]{SeteZur2019b}, $f(z) = \eta + t d$ has $2$ solutions in 
$D_\eps(z_j)$ and $f(z) = \eta - t d$ has no solutions in $D_\eps(z_j)$ 
if $\im(\conj{\tau}_j d) > 0$ and $t > 0$ is sufficiently small.
If $\im(\conj{\tau}_j d) < 0$ then $f(z) = \eta \pm t d$ swap their roles.
However, it is not guaranteed that the additional solutions attract the points 
$z_\pm(z_j)$ if $\arg(d) \neq \arg(\pm c(z_j))$, with $z_\pm$ and $c$ from 
Theorem~\ref{thm:zero_at_crit}.
\end{remark}

\begin{figure}
{\centering
\includegraphics[width=0.32\linewidth]{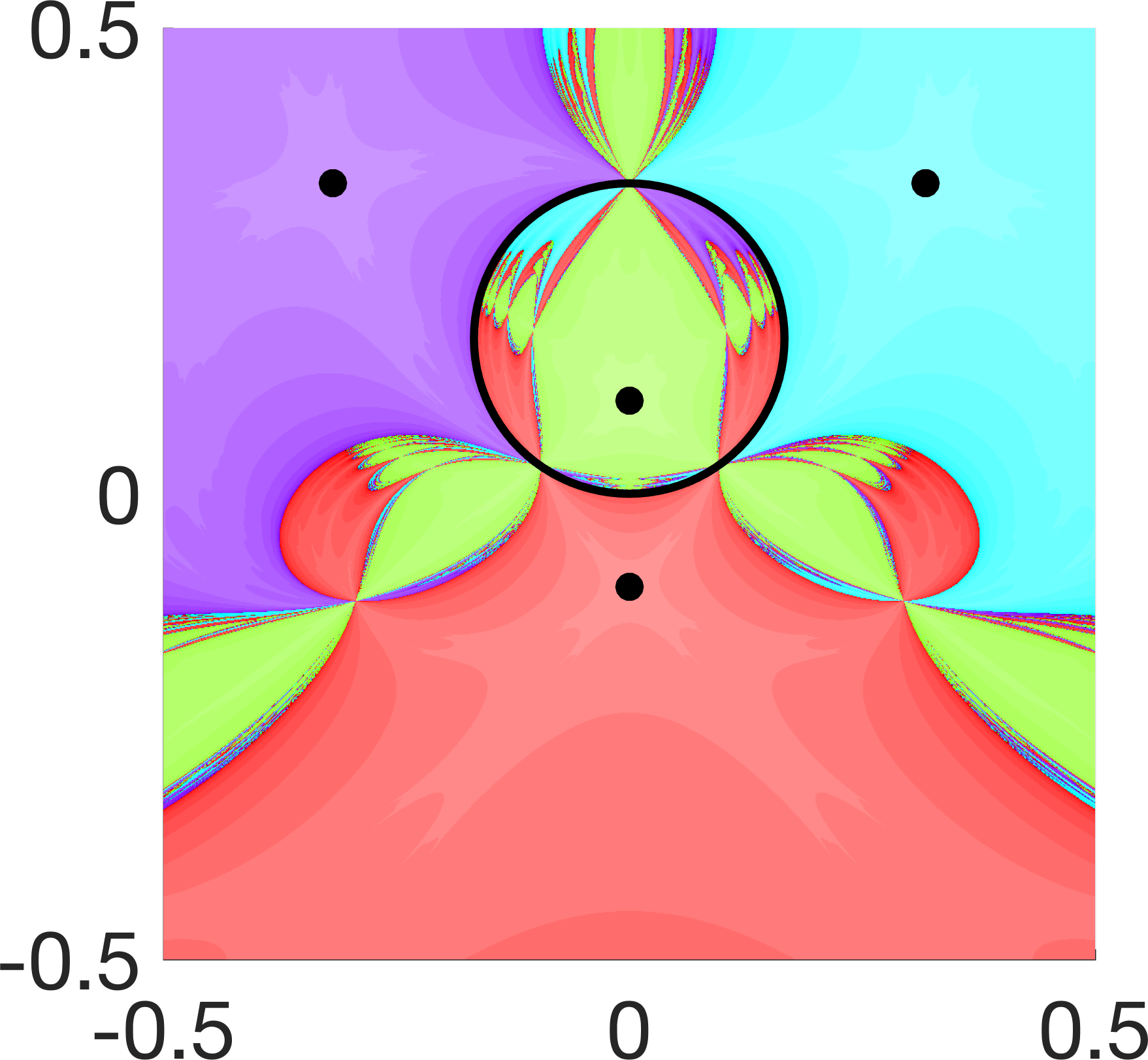}
\includegraphics[width=0.32\linewidth]{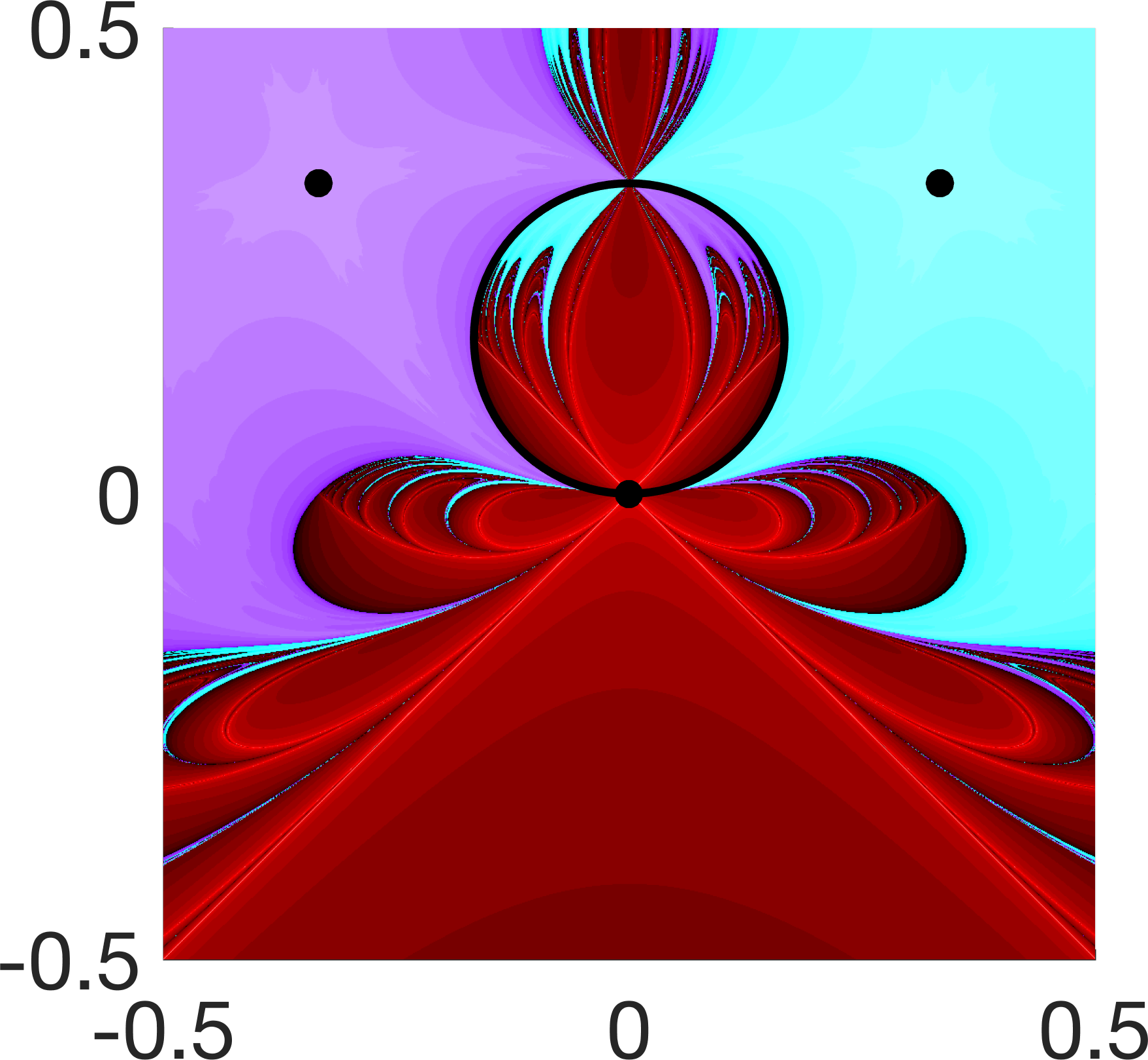}
\includegraphics[width=0.32\linewidth]{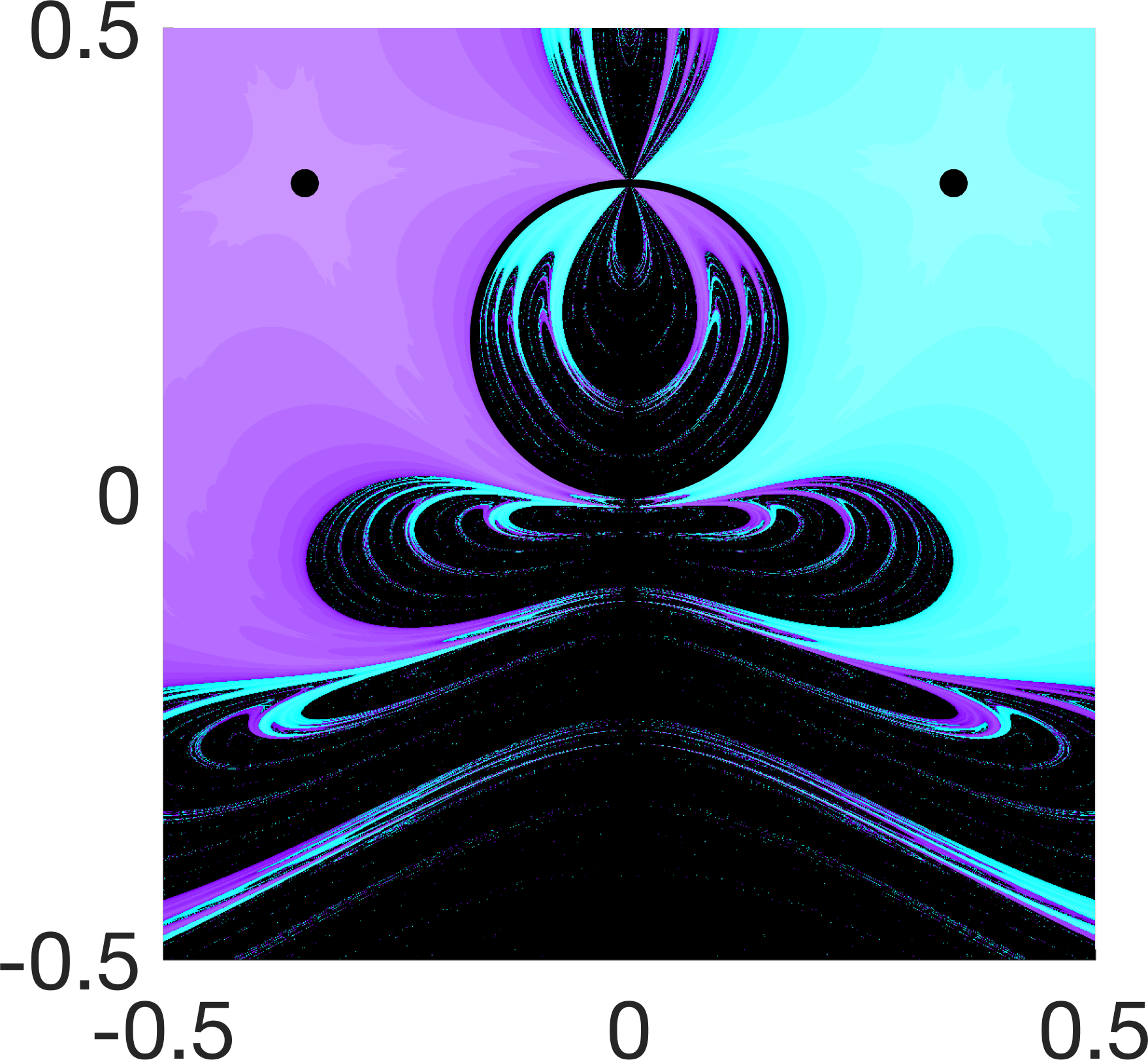}
}
\caption{
Solutions (black dots) of $f(z) = z + 2iz + \conj{z + iz^2} = t c$ with basins 
of attraction for $z_0 = 
0$, $c = -i$ and $t = 0.01$, $t = 0$, $t = -0.01$ (from left to right); see 
Remark~\ref{rem:basins}.  The black circle is the critical set of $f$.}
\label{fig:fractals_at_fold}
\end{figure}

\begin{remark} \label{rem:basins}
The basins of attraction of the two solutions of $f(z) = \eta + t c$ in 
$D_\eps(z_0)$ ($t > 0$) merge ($t = 0$) and disappear ($t < 0$) with the 
solutions.  This is illustrated in Figure~\ref{fig:fractals_at_fold}.
Points that are attracted by the same solution have the same color.
The darker the shading, the more iterations are needed until (numerical) 
convergence.
Points where the iteration does not converge (numerically) are colored in black.
This highlights why it is important to remove the two points 
$\nm{z_\pm}{\eta_1}{f}{\infty}$ from $f^{-1}( \{ \eta_1 \})$ 
in~\eqref{eqn:Sk_cross} for practical computations.
\end{remark}

\begin{figure}[t]
{\centering
\includegraphics[width=0.32\linewidth]{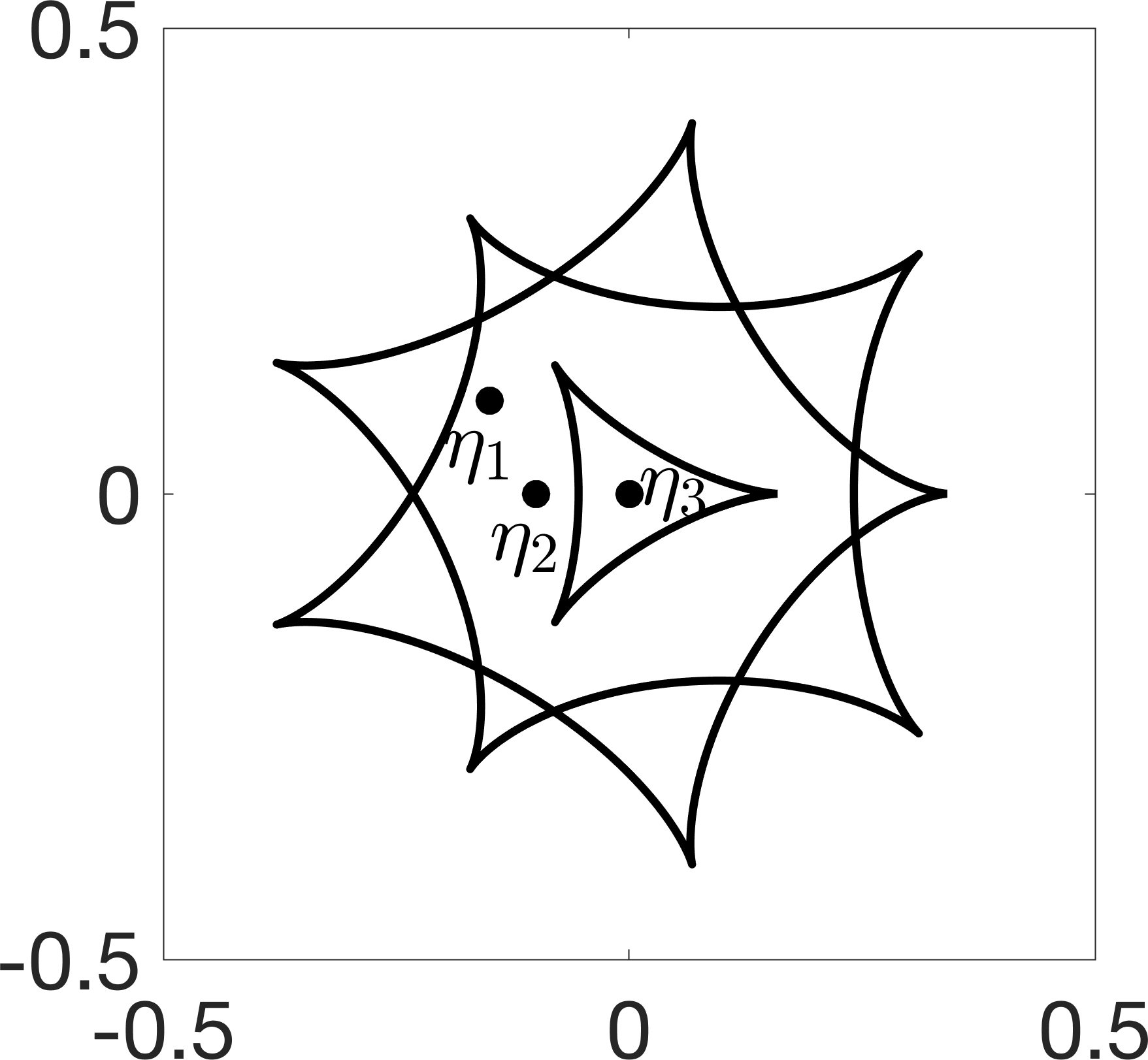}
\includegraphics[width=0.32\linewidth]{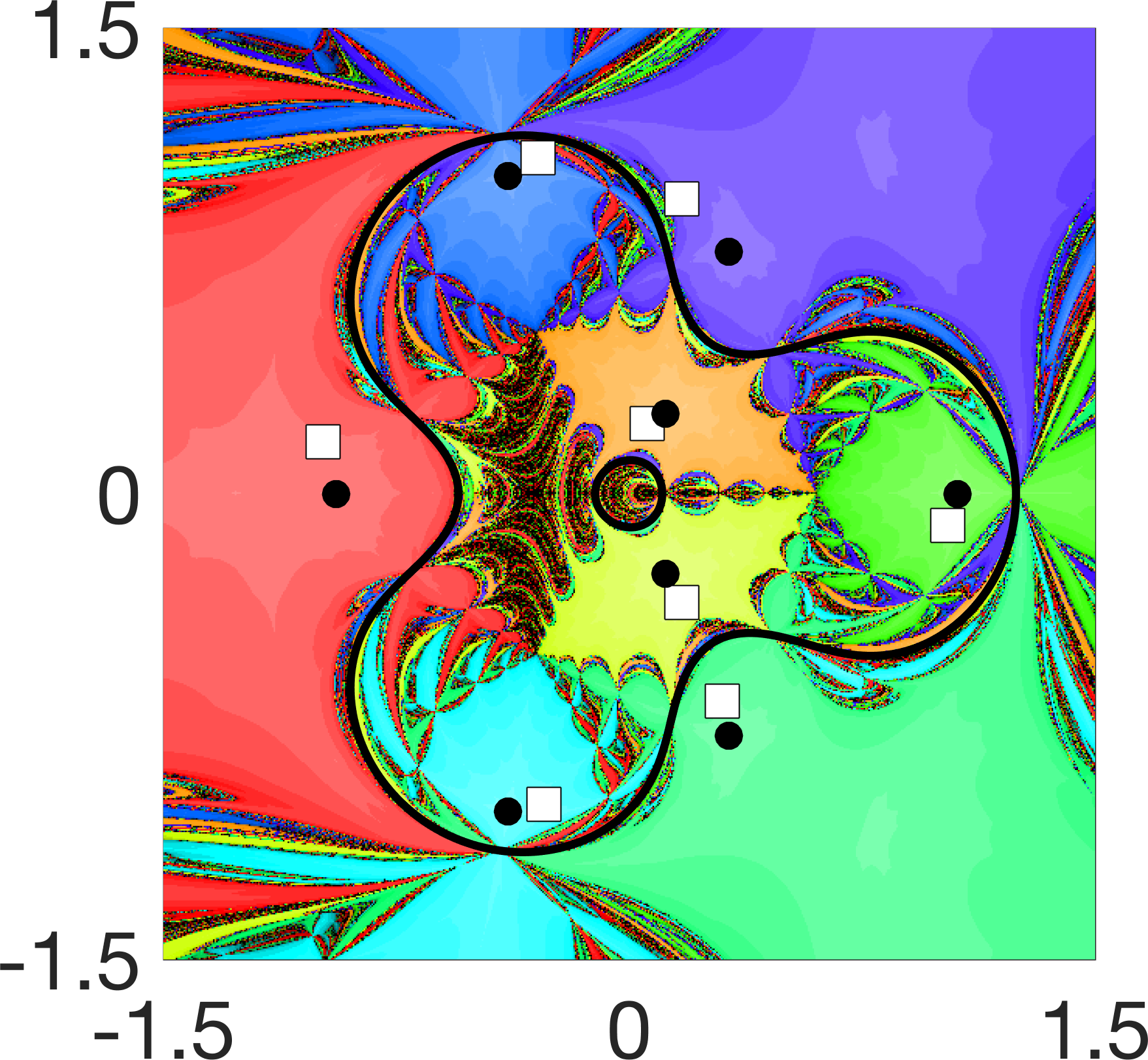}
\includegraphics[width=0.32\linewidth]{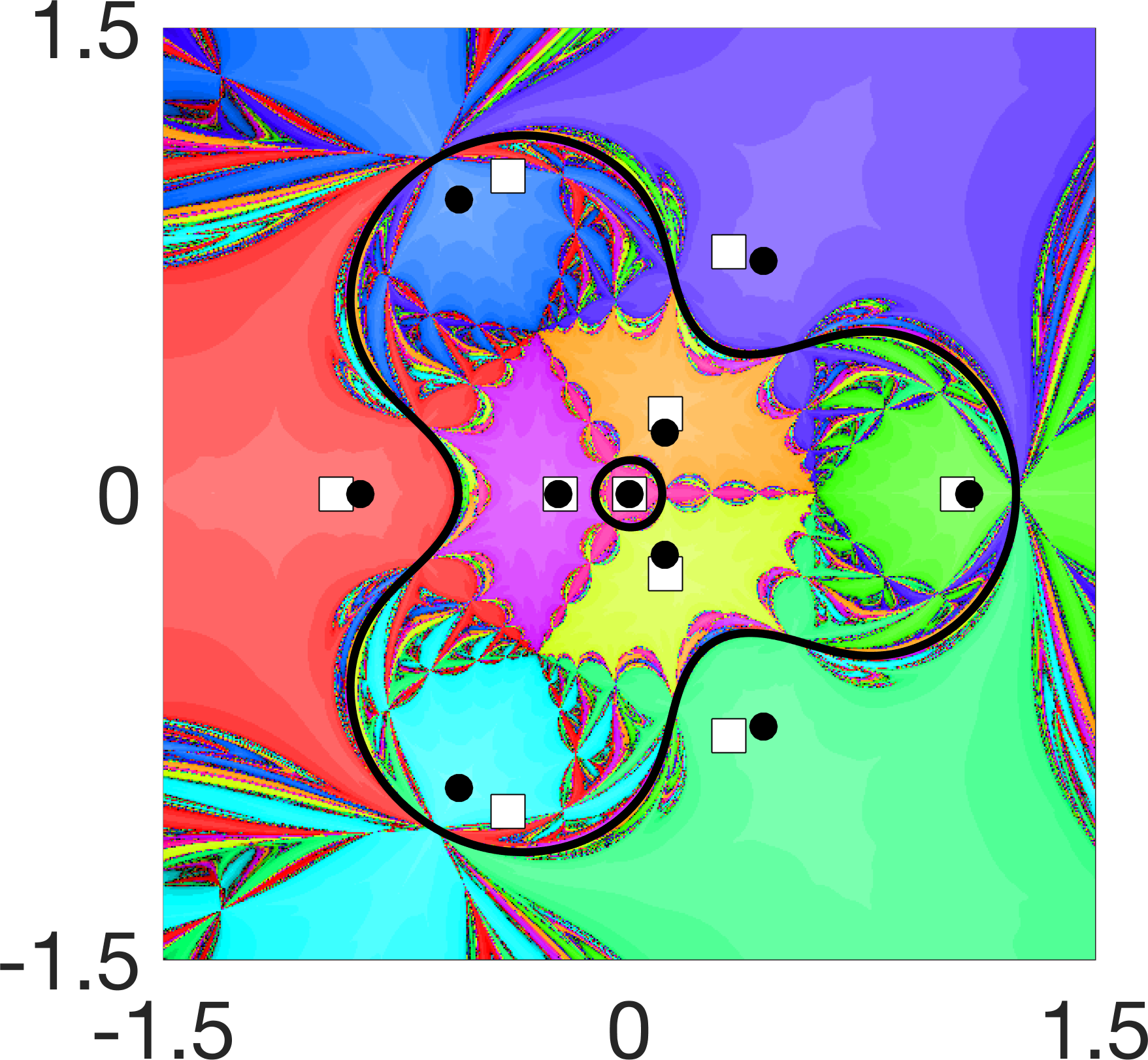}

}
\caption{Illustration of Theorems~\ref{thm:correction_regular} 
and~\ref{thm:correction_singular} for $f(z) = z - \conj{z^2/(z^3 - 0.6^3)}$ 
(see~\eqref{eqn:mpw} below).  Left: caustics and $\eta_1$, $\eta_2$, $\eta_3$.
Middle: prediction set $S= f^{-1}(\{\eta_1\})$ (white squares) of 
$f^{-1}(\{\eta_2\})$ (black dots).
Right: prediction set $S= f^{-1}(\{\eta_2\}) \cup \{z_\pm\}$ (white squares) of 
$f^{-1}(\{\eta_3\})$ (black dots).}
\label{fig:basins}
\end{figure}

We illustrate Theorems~\ref{thm:correction_regular} 
and~\ref{thm:correction_singular} in Figure~\ref{fig:basins}.
First, consider the step from $\eta_1$ to $\eta_2$ and the dynamics of 
$H_{f-\eta_2}$ (middle plot).  Every basin of attraction contains exactly one 
element of $S = f^{-1}(\{\eta_1\})$, i.e., $S$ is a prediction set of 
$f^{-1}(\{\eta_2\})$.
Some elements in $S$ are close to the boundary of the respective basins, such 
that $S$ may not be a prediction set of $f^{-1}(\{ \eta_2 \})$ if $\eta_2$ is slightly 
perturbed.
Since the step from $\eta_2$ to $\eta_3$ crosses a caustic, the number of 
solutions of $f(z) = \eta_2$ and $f(z) = \eta_3$ differ.
According to Theorem~\ref{thm:correction_singular} $S = f^{-1}(\{\eta_2\}) 
\cup \{ z_\pm \}$ with the points $z_\pm$ from~\eqref{eqn:zpm} is a prediction 
set of $f^{-1}(\{\eta_3\})$, shown in the right plot.

Theorems~\ref{thm:zeros_at_poles}, \ref{thm:correction_regular} 
and~\ref{thm:correction_singular} establish the transport of images method if 
the points $\eta_k$ are chosen properly.
This motivates the next definition.

\begin{definition}\label{def:transportpath}
Let $f$ be a non-degenerate harmonic mapping on $\widehat{\C}$.
We call $(\eta_1, \dots, \eta_n) \in \C^n$ a \emph{transport path} from 
$\eta_1$ to $\eta_n$ if for each $k = 1, \ldots, n-1$
either $f^{-1}(\{ \eta_k \})$ or $S$ in~\eqref{eqn:Sk_cross} is a prediction 
set of $f^{-1}(\{ \eta_{k+1} \})$.
\end{definition}

Along a transport path, all solutions of 
$f(z) = \eta_{k+1}$ can be computed from the solutions of $f(z) = \eta_k$.
Next we show the existence of such paths.
Here we call $\eta$ a multiple caustic point if $\abs{f^{-1}(\{ \eta \}) \cap 
\cC} > 1$.

\begin{theorem} \label{thm:transport_path}
Let $f$ be a non-degenerate harmonic mapping on $\widehat{\C}$ without 
non-isolated multiple caustic points, and let $\eta_s$, $\eta_e \in \C 
\setminus f(\cC)$.
Then there exists a transport path from $\eta_s$ to $\eta_e$.
\end{theorem}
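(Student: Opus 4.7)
The plan is to build a continuous piecewise-linear auxiliary path from $\eta_s$ to $\eta_e$ whose image meets $f(\cC)$ only at finitely many simple fold points, each crossed along a direction in $\arg(\pm c)$, and then to discretize this path.

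First, I isolate the ``bad set'' $B \subseteq f(\cC)$ consisting of (i) cusps, (ii) multiple caustic points, and (iii) isolated caustic points coming from critical curves $\gamma$ with $\psi \equiv 0$. By Lemma~\ref{lem:tangent_to_caustic}, on every critical curve $\psi$ either vanishes identically (giving one point to avoid) or has only finitely many zeros (giving finitely many cusps). By hypothesis multiple caustic points are isolated, hence finite by compactness of $f(\cC)$. Since a non-degenerate $f$ has only finitely many critical curves, $B$ is finite and $f(\cC) \setminus B$ is a finite union of embedded analytic fold arcs.

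Next, I produce a continuous path $\alpha : \cc{0,1} \to \C$ with $\alpha(0) = \eta_s$ and $\alpha(1) = \eta_e$ whose image avoids $B$ and meets the fold arcs transversally at finitely many points $\eta^{(1)}, \ldots, \eta^{(N)}$; a generic small perturbation of the segment $\cc{\eta_s, \eta_e}$ achieves this, because $B$ is finite and the fold arcs form an analytic $1$-complex. At each $\eta^{(i)} = f(z_0^{(i)})$ the vector $c^{(i)}$ from \eqref{eqn:c} is transverse to the caustic, because by Theorem~\ref{thm:zero_at_crit} the two sides of the caustic in directions $\pm c^{(i)}$ have different pre-image counts, so $c^{(i)}$ cannot be tangent to the caustic. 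I then modify $\alpha$ locally by replacing a short sub-arc around each $\eta^{(i)}$ by the straight segment $\cc{\eta^{(i)} - \epsilon_i c^{(i)}, \eta^{(i)} + \epsilon_i c^{(i)}}$, choosing $\epsilon_i > 0$ so small that this segment meets $f(\cC)$ only at $\eta^{(i)}$; this is possible since $\eta^{(i)}$ is a transverse crossing of a single smooth arc and $B$ together with the other fold arcs stays at positive distance locally. The modified path still connects $\eta_s$ to $\eta_e$ and now crosses each caustic point along the direction $\pm c^{(i)}$.

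Finally, I discretize. The path decomposes into finitely many \emph{regular pieces} entirely contained in a single caustic tile and finitely many \emph{singular segments} of the form $\cc{\eta^{(i)} - \epsilon_i c^{(i)}, \eta^{(i)} + \epsilon_i c^{(i)}}$. On each regular piece $P$, Theorem~\ref{thm:correction_regular} gives for every $\eta \in P$ a radius $\eps(\eta) > 0$ such that $D_{\eps(\eta)}(\eta) \subseteq \C \setminus f(\cC)$ furnishes valid regular transport steps; compactness of $P$ provides a finite subcover and hence a finite sequence of consecutive sample points, each lying in the previous one's disk. For each singular segment I further shrink $\epsilon_i$ below the $\delta$ produced by Theorem~\ref{thm:correction_singular}, so that the two endpoints constitute a single singular transport step. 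Concatenating these finite discretizations yields a transport path from $\eta_s$ to $\eta_e$ in the sense of Definition~\ref{def:transportpath}.

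The main obstacle is reconciling the direction constraint $\arg(\eta_{k+1} - \eta_k) = \arg(\pm c)$ at every caustic crossing with the requirement of avoiding other fold arcs, cusps, and multiple caustic points; this is what forces the explicit local modification of $\alpha$ near each $\eta^{(i)}$ and relies crucially on transversality of $c$ to the caustic together with the finiteness of $B$ guaranteed by the hypothesis.
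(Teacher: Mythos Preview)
Your argument follows the paper's strategy closely: isolate a finite ``bad'' subset of $f(\cC)$, thread a polygonal path through the remaining simple folds with the correct crossing direction, and then discretize via Theorems~\ref{thm:correction_regular} and~\ref{thm:correction_singular}. There is, however, a gap in your definition of $B$. You include cusps, multiple caustic points, and images of arcs with $\psi\equiv 0$, but a caustic point can fail to be a fold for other reasons: it may lie in $f(\cM)$, it may be the image of a point with $\omega'(z_0)=0$ (where the parametrization~\eqref{eqn:parametrization} and hence the tangent~\eqref{eqn:tangent_to_caus} are undefined), or it may correspond to a zero of $\psi$ \emph{without} sign change (which is neither a fold nor a cusp in the paper's terminology). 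All of these sets are finite for a non-degenerate $f$, so your argument goes through once you enlarge $B$ accordingly; but as written, the claim that $f(\cC)\setminus B$ consists of fold arcs is not justified. A second, smaller point: ``isolated, hence finite by compactness of $f(\cC)$'' needs one more sentence---isolated multiple caustic points are finite because the caustic arcs are piecewise analytic and two such arcs with infinitely many intersections accumulating would coincide on a sub-arc, contradicting the hypothesis.

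Where you do differ from the paper is in producing the path with finitely many caustic crossings. You invoke a generic-perturbation/transversality argument, which is legitimate but informal. The paper instead gives an explicit construction: it builds an increasing sequence of regions $U_k$ (``points at most $k$ crossings from $\eta_s$'') and uses that the total caustic length is finite to bound the number of crossings by $\lceil L/(2\pi\delta)\rceil$. Your route is shorter; the paper's route is more self-contained and yields a quantitative bound.
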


\begin{proof}
We show that there exists a polygonal path from $\eta_s$ to $\eta_e$, 
which intersects the caustics only at a finite number of simple fold points.
Then this path is refined if necessary.

Let $R$ be the complex plane $\C$ without caustic points that are not simple 
fold points, which is $\C$ minus a set of isolated points, and hence a region 
(open and connected), as we discuss next.
Since $f$ is non-degenerate, the functions $\partial_z f$, $\conj{\partial_{\conj{z}} f}$
and $\omega$ are rational, see~\eqref{eqn:dzdzbarf}.
This has several implications.
The sets $\cM$ from~\eqref{eqn:def_cM} and $f(\cM)$ are finite.
The set $\cC \setminus \cM$ consists of $\deg(\omega)$ many pre-images 
of the unit circle under $\omega$.
There are at most finitely many critical points $z_0$ of $f$ with $\omega'(z_0) 
= 0$, hence the number of caustic points where the 
tangent~\eqref{eqn:tangent_to_caus} does not exist is finite.
The number of caustic points where the tangent exists and is zero is finite by 
Lemma~\ref{lem:tangent_to_caustic} (on each of the 
$\deg(\omega)$ many pre-images of the unit circle, there are at most finitely 
many points with $\tau = 0$, or the whole arc is mapped onto a single point).
By assumption, multiple caustic points are isolated.

Next, we prove the existence of a rectifiable path from $\eta_s$ to $\eta_e$ 
in $R$ with only finitely many caustic points (simple folds) on it.
Since $\eta_s, \eta_e \notin f(\cC)$ and since the caustics are compact there exists 
$\delta > 0$ such that $D_\delta(\eta_s)$, $D_\delta(\eta_e)$ contain no 
caustic points.
We consider the increasing sequence of regions $U_k \subseteq R$ whose points 
are `at most $k$ caustic crossings distant' from $\eta_s$, constructed as 
follows.
Let $U_0$ be the component (maximal open and connected set) of $R \setminus 
f(\cC)$ with $\eta_s \in U_0$.
Suppose that $U_k \subseteq R$ has been constructed.
Let $U_{k+1}$ be the component of $R \setminus (f(\cC) \setminus (\partial 
U_0 \cup \ldots \cup \partial U_k))$ with $\eta_s \in U_{k+1}$.
If $\eta_e \in U_k$, there exists a rectifiable path from $\eta_s$ to 
$\eta_e$ in $R$ that intersects $\partial U_0, \ldots, \partial U_{k-1}$, and 
hence has $k$ caustic crossings.
If $\eta_e \notin U_k$ then the boundary of $U_k$ in $R$, which consists of 
caustic arcs, has length at least $2 \pi \delta$, since then 
$D_\delta(\eta_s) \subseteq U_k$ and 
$D_\delta(\eta_e) \subseteq R \setminus U_k$.
Let $m = \ceil*{\frac{L}{2\pi\delta}}$, where $L < \infty$ is the total length 
of the caustics (see the discussion below Definition~\ref{def:nondegenerate}).
Then $\eta_e \in U_m$, and there exists a rectifiable path from $\eta_s$ to 
$\eta_e$ with at most 
$m$ caustic points.

By manipulations in an arbitrary small neighborhood around this path in~$R$ 
we obtain a polygonal path $P = (\eta_1, \ldots, \eta_n)$ 
with $\eta_1 = \eta_s$ and $\eta_n = \eta_e$, such that:
(1) $\eta_1, \ldots, \eta_n \in \C \setminus f(\cC)$, 
(2) each line segment $\cc{\eta_k, \eta_{k+1}}$ contains at most one caustic 
point 
and (3) if the line segment $\cc{\eta_k, \eta_{k+1}}$ contains a caustic point
then $\arg(\eta_{k+1} - \eta_k) = \arg(\pm c)$, with $c$ as in 
Theorem~\ref{thm:zero_at_crit}.

We refine this path to get a transport path.
First we consider the line segments $\cc{\eta_k, \eta_{k+1}}$, which contain a 
caustic point $\eta$, but where $S$ in~\eqref{eqn:Sk_cross} is not a prediction 
set of $f^{-1}(\{ \eta_{k+1} \})$.
We add two points from $\cc{\eta_k, 
\eta_{k+1}} \setminus f(\cC)$ sufficiently 
close to $\eta$ such that Theorem~\ref{thm:correction_singular} applies to 
these points.
Denote the refined path for simplicity again by $P = (\eta_1, \ldots, \eta_n)$.
Next we refine the line segments $E = \cc{\eta_k, \eta_{k+1}}$ without 
caustic points.
For all $\eta \in E$ there exists an $\eps(\eta) > 0$ such that 
$f^{-1}(\{\eta\})$ is a prediction set of $f^{-1}(\{\xi\})$ for all $\xi \in 
D_{\eps(\eta)}(\eta)$ by Theorem~\ref{thm:correction_regular}.
The family $(D_{\eps(\eta)}(\eta))_{\eta \in E}$ 
is an open covering of the compact set $E$.  Hence, there exists a finite 
subcovering  $(D_{\eps(\kappa_j)}(\kappa_j))_{j=1,\dots,\ell}$ with 
$\kappa_1 
= \eta_k$ and $\kappa_\ell = \eta_{k+1}$, which gives a partition of $E$, 
where $f^{-1}(\{ \kappa_j \})$ is a prediction set of $f^{-1}(\{ \kappa_{j+1} 
\})$ for $j = 1, \ldots, \ell-1$.
Refining all line segments without caustic points yields a transport path.
\end{proof}

Note that non-degenerate harmonic mappings can actually have non-isolated 
multiple caustic points.

\begin{example}
The harmonic mapping $f(z) = \frac{1}{2} (z^2 - 1)^2 + \conj{z^2 - 1}$ 
from~\cite[Ex.~5.1]{SeteZur2019b} is non-degenerate and maps its critical arcs 
$\gamma_\pm : \cc{-\pi, \pi} \to \cC$, 
$\gamma_\pm(t) = \pm \sqrt{1 + e^{-it}}$, onto the same caustic arc.
\end{example}

We close this section by noting the correctness of the transport of images 
method, provided that $0$ is not a caustic point of $f$. 

\begin{corollary}\label{cor:transport_path}
Let $f$ be a non-degenerate harmonic mapping on $\widehat{\C}$ without 
non-isolated multiple caustic points, such that $0$ is 
not a caustic point.  Then there exists a point $\eta_1 \in \C \setminus 
f(\cC)$,
such that Theorem~\ref{thm:zeros_at_poles} applies to all poles of $f$,
and there exists a transport path from $\eta_1$ to $0$. 
\end{corollary}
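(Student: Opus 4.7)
The plan is to combine Theorem~\ref{thm:zeros_at_poles} (the initial phase) with Theorem~\ref{thm:transport_path} (existence of transport paths) via a judicious choice of $\eta_1$. The statement splits into two claims: (i) existence of a starting point $\eta_1 \notin f(\cC)$ at which Theorem~\ref{thm:zeros_at_poles} applies simultaneously to every pole of $f$, and (ii) existence of a transport path from that $\eta_1$ to $0$. Claim~(ii) follows immediately from Theorem~\ref{thm:transport_path} once $\eta_1 \in \C \setminus f(\cC)$ is chosen, using that $0 \in \C \setminus f(\cC)$ by hypothesis and that the non-isolated multiple caustic point assumption carries over verbatim.

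For claim~(i), I would argue as follows. Let $z_1, \dots, z_m \in \widehat{\C}$ be the poles of $f$. At each pole $z_j$, Definition~\ref{def:nondegenerate} gives a local decomposition of the form~\eqref{eqn:near_pole} or~\eqref{eqn:near_pole_infty} with $\abs{a_{-n_j}} \neq \abs{b_{-n_j}}$. Theorem~\ref{thm:zeros_at_poles} requires, at pole $z_j$, that $c_j \coloneq \eta_1 - (a_0^{(j)} + \conj{b_0^{(j)}}) \neq 0$ and that $\abs{c_j}$ be larger than some threshold $M_j > 0$. Since the number of poles is finite, setting $M \coloneq \max_j M_j$ and requiring $\abs{\eta_1} > M + \max_j \abs{a_0^{(j)} + \conj{b_0^{(j)}}}$ ensures by the triangle inequality that $\abs{c_j} > M_j$ for every $j$ simultaneously. (A small technical remark: if $\infty$ is itself a pole, the expansion~\eqref{eqn:near_pole_infty} has no constant term $a_0 + \conj{b}_0$ to subtract, but the principal part at $\infty$ simply requires $\abs{\eta_1}$ large; one verifies the analogous threshold condition there in the same way.)

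It then remains to ensure $\eta_1 \in \C \setminus f(\cC)$. Since $f$ is non-degenerate, $\cC$ is bounded in $\C$, and the critical set has only finitely many connected components parametrized by piecewise $C^\infty$ curves of finite total length (see the discussion after Definition~\ref{def:nondegenerate}). Hence $f(\cC)$ is compact in $\C$, so it is contained in some disk $\overline{D_{R}(0)}$. Choosing $\abs{\eta_1} > \max(R, M + \max_j \abs{a_0^{(j)} + \conj{b_0^{(j)}}})$ yields a point $\eta_1 \in \C \setminus f(\cC)$ at which Theorem~\ref{thm:zeros_at_poles} applies to every pole.

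With this $\eta_1$ in hand, apply Theorem~\ref{thm:transport_path} to the pair $\eta_s = \eta_1, \eta_e = 0$, both of which lie in $\C \setminus f(\cC)$ by construction and by hypothesis respectively, and the result follows. The only mild subtlety, and arguably the only ``obstacle,'' is checking that the uniform threshold on $\abs{\eta_1}$ from Theorem~\ref{thm:zeros_at_poles} is well-defined across finitely many poles and compatible with avoiding the compact set $f(\cC)$; everything else is a direct citation of the preceding theorems.
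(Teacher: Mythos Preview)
Your argument is correct and follows essentially the same approach as the paper's own proof: choose $\abs{\eta_1}$ large enough that Theorem~\ref{thm:zeros_at_poles} applies at every pole and that $\eta_1$ lies outside the compact set $f(\cC)$, then invoke Theorem~\ref{thm:transport_path}. The paper compresses this into three sentences, whereas you spell out the uniform threshold over the finitely many poles explicitly, but the logic is identical.
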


\begin{proof}
Theorem~\ref{thm:zeros_at_poles} applies to all poles of $f$ for all $\eta_1 \in \C$ 
with large enough $\abs{\eta_1}$.  Since $f(\cC)$ is compact, $\eta_1$ can be 
chosen in $\C \setminus f(\cC)$.  Then there exists a transport path from 
$\eta_1$ to $0$ by Theorem~\ref{thm:transport_path}.
\end{proof}

\begin{remark}
In Corollary~\ref{cor:transport_path}, the point $0$ is not a caustic 
point of $f$.  If~$0$ is a caustic point then $f$ has (at least) one singular 
zero $z_0 \in \cC$.
Since continuation may run into numerical trouble if solutions are almost 
singular,
special strategies are used.
These are usually referred to as the \emph{endgame}.  For systems of analytic 
functions, two commonly used endgame strategies are the \emph{Cauchy 
endgame}~\cite{MorganSommeseWampler1991} and the 
\emph{power series endgame}~\cite{MorganSommeseWampler1992}.
The setting for the transport of images method is somewhat different since we 
require the critical set $\cC$ in the method.  Therefore, we can 
(numerically) compute the zeros of $f$ in $\cC$, which are exactly the singular 
zeros.  Thus, in the endgame we would only have to determine which homotopy curves 
intersect at $z_0$.
This can be done with (2) and (3a) in Section~\ref{sect:homotopycurves} if 
$f$ is non-degenerate and $\omega'(z_0) \neq 0$.
\end{remark}

\subsection{Analysis of the homotopy curves}
\label{sect:homotopycurves}

We now consider the continuous problem behind our (discrete) computation and
describe the homotopy curves, i.e.,
how the solution set of $f(z) = \eta(t)$ varies with $t$, where $f : \Omega 
\to \C$ is a harmonic mapping and $\eta : \cc{a, b} \to \C$, $t \mapsto 
\eta(t)$, is a 
(continuous) path.
We analyze the solution set locally with results 
from~\cite{Lyzzaik1992,SeteZur2020,SeteZur2019b}.
Combining the local results gives the global picture.
We distinguish the cases (1) $z_0 \in \Omega \setminus \cC$, (2) $z_0 \in \cM$ 
and (3) $z_0 \in \cC \setminus \cM$.

\textbf{(1)} Let $z_0 \in \Omega \setminus \cC$, i.e., the Jacobian of $f$ is 
non-zero at $z_0$.  By the inverse function theorem there exists an $\eps > 0$ 
and an open set $V \subseteq \C$ such that $f : D_\eps(z_0) \to V$ is a 
diffeomorphism.  Thus, a curve $\eta(t)$ in $V$ has a unique pre-image curve $z(t) = 
f^{-1}(\eta(t))$.
In particular, the homotopy curves can intersect only at critical points.

\textbf{(2)} Let $z_0 \in \cM$ from~\eqref{eqn:def_cM}.
Then $f$ has the form
\begin{equation*}
f(z) = a_0 + \sum_{k=n}^\infty a_k (z-z_0)^k + \conj{b_0 + \sum_{k=n}^\infty 
b_k (z-z_0)^k}
\end{equation*}
with $n \geq 2$ and $\abs{a_n} \neq \abs{b_n}$.  If $\eta(t)$ passes through 
$f(z_0)$  then exactly $n$ homotopy curves intersect at $z_0$ with equispaced 
angles, since $f(z) = \eta(t)$ is locally to leading order $f(z_0) + a_n 
(z-z_0)^n + \conj{b_n (z-z_0)^n} = \eta(t)$.
The latter can be uniquely solved for $(z-z_0)^n$ by
\begin{equation*}
(z - z_0)^n = \frac{\conj{a}_n (\eta(t) - f(z_0)) - \conj{b_n (\eta(t) - 
f(z_0))}}{\abs{a_n}^2 - \abs{b_n}^2},
\end{equation*}
see~\cite[Lem.~4.2]{SeteZur2020}, which yields the $n$ solutions.

\textbf{(3)} For $z_0 \in \cC \setminus \cM$ the situation is more involved.
By~\cite[Thm.~2.1]{Lyzzaik1992}, a harmonic mapping is either (a) light,
(b) has zero Jacobian or (c) is constant on some arc of $\cC \setminus \cM$.
We discuss these three cases.

\textbf{(3a)} Let $f$ be a light harmonic mapping.
Essentially, the behavior of the homotopy curves at $z_0$ depends on the 
tangent to the caustics at $f(z_0)$ and hence on $\psi$ 
from~\eqref{eqn:psi}.
Let $z_0 \in \cC \setminus \cM$.  Then $z_0$ is a zero of order $\ell \geq 0$
of $\partial_z f$ (and of $\conj{\partial_{\conj{z}} f}$).  First let 
$\omega'(z_0) \neq 0$, so that $z_0 = \gamma(t_0)$ is a 
point on a critical curve that is not a self-intersection 
point of the critical curves.
\begin{enumerate}
\item If $\psi(t_0)$ is non-zero ($f(z_0)$ is a fold caustic point and consequently
$\ell = 0$) then $z_0$ is a turning point of the homotopy curves, i.e., two 
curves start or end at $z_0$ if $\eta(t)$ crosses the caustics at $f(z_0)$.
This follows from Theorem~\ref{thm:zero_at_crit} 
or~\cite[Thm.~5.1]{Lyzzaik1992} and is illustrated 
in~\cite[Fig.~5]{SeteZur2019b}.

\item More generally, if $\psi$ does not change sign at $t_0$ 
then~\cite[Thm.~5.1]{Lyzzaik1992} implies that $\ell$ homotopy curves intersect 
at $z_0$ and two homotopy 
curves start or end at $z_0$, if $\eta(t)$ crosses the caustics at $f(z_0)$. 
Hence, $z_0$ is a turning point and, if $\ell > 0$, a bifurcation point of 
the homotopy curves.

\item If $\psi$ changes sign at $t_0$ (i.e., $f(z_0)$ is a cusp)
then by~\cite[Thm.~5.1]{Lyzzaik1992} $z_0$ is a bifurcation point where
$\ell+1$ homotopy curves intersect and two homotopy curves start or end, if 
$\eta(t)$ crosses the caustics at $f(z_0)$.
\end{enumerate}
For $\omega'(z_0) = 0$ there is only an upper bound on the local valence of 
$f$ at $z_0$ in~\cite[Sect.~6]{Lyzzaik1992} and hence on the number of homotopy 
curves, which also depends on the caustic tiles bordering $f(z_0)$ that are
traversed by $\eta(t)$.

\begin{figure}
{\centering
\includegraphics[width=0.43\linewidth]{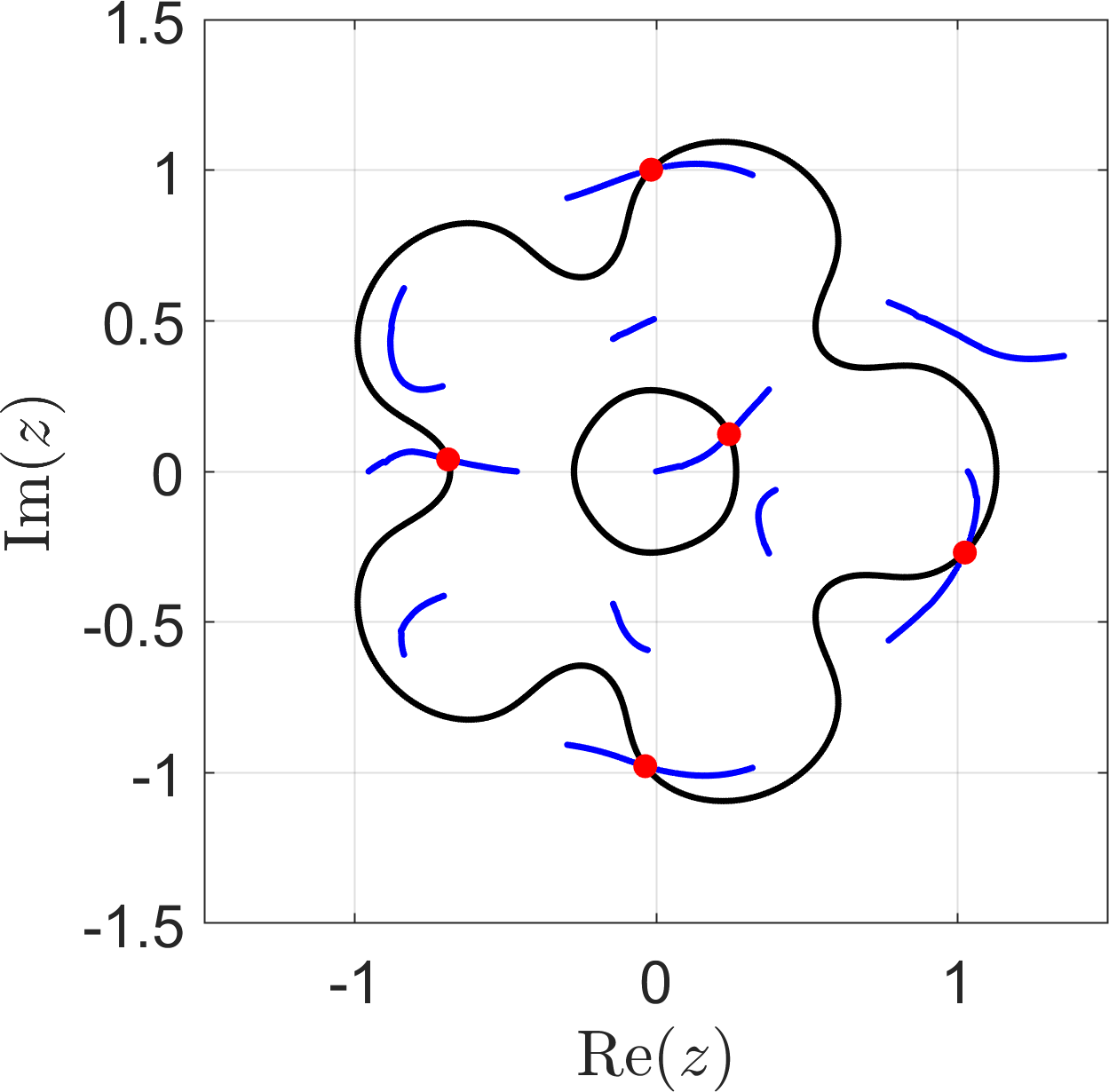}
\hfill
\includegraphics[width=0.55\linewidth]{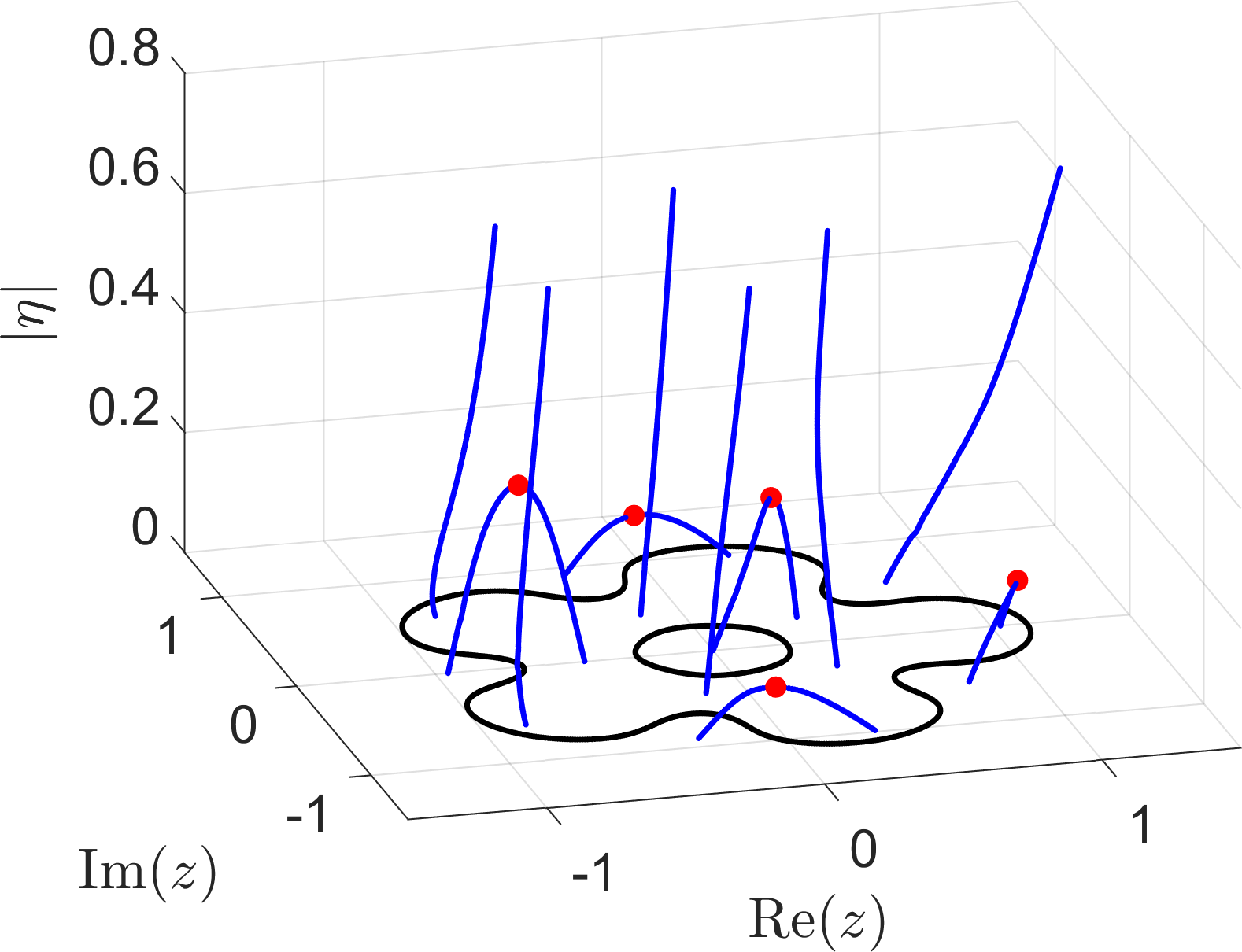}

}
\caption{Computed homotopy curves (blue) with turning points (red dots) in 
Example~\ref{ex:homotopycurves} and critical set of $f$ (black).}
\label{fig:homotopycurves}
\end{figure}

\begin{example} \label{ex:homotopycurves}
Consider the harmonic mapping~\eqref{eqn:mpw} with $n=5$ and $r = 0.6$, which is light and 
non-degen\-er\-ate. 
Figure~\ref{fig:homotopycurves} shows the computed homotopy curves where $\eta = 
\eta(t)$ 
parametrizes the segment from $0.6699 + 0.1795i$ to $0$, which intersects the 
caustics of $f$ only at simple fold points.  Along this path each caustic 
crossing 
produces two additional solutions.  The corresponding turning points of the 
homotopy curves are marked with red dots.
Figure~\ref{fig:homotopycurves} shows the solutions $z(t)$ of $f(z) = \eta(t)$ 
in the $z$-plane (left) and 
over the $z$-plane at height $\abs{\eta(t)}$ (right).
\end{example}

\textbf{(3b)} Let $f$ be a harmonic mapping with $J_f \equiv 0$, i.e., $\cC = 
\Omega$.
Then $f$ has the form $f(z) = a \re(h(z)) + b$ with $a, b \in \C$, $a \neq 0$, and an analytic 
function $h$; see~\cite[Lem.~2.1]{Lyzzaik1992} or~\cite[Lem.~4.7]{Neumann2005}.
(If $\Omega$ is not simply connected, $h$ may be multi-valued, but $\re(h)$ is 
single-valued.)
Then $f(\Omega) \subseteq \{ a t + b : t \in \R \}$, and $f(z) = \eta$ if and 
only if $\re(h(z)) = (\eta - b)/a$.  Since $h$ is analytic, it is either 
constant or has the open mapping property, which shows that the solution set of 
$f(z) = \eta$ is either empty, $\Omega$ or a union of curves.
Therefore, along a path $\eta(t)$ crossing the caustics of $f$, the solutions do not form 
curves depending on $t$.

\begin{example}
The Jacobian of $f(z) = z + \conj{z}$ vanishes 
identically.  Clearly, $f(z) = \eta \in \C \setminus \R$ has no solution, while 
$f(z) = \eta \in \R$ is solved by all $z$ with $\re(z) = \eta/2$.
\end{example}

\textbf{(3c)} Let $f$ be a harmonic mapping that is constant on some arc 
$\Gamma$ of 
$\cC \setminus \cM$, and $z_0 \in \Gamma$.  Then $f$ maps $\Gamma$ onto the 
single point $f(z_0)$.
For $\eta(t)$ not equal to $f(z_0)$ (or any similar point), the homotopy curves 
are as for light harmonic mappings.  However, if $\eta(t) = f(z_0)$ then
the solution set also contains all points of $\Gamma$, and the solutions do not 
form curves depending on~$t$.

\begin{example} \label{ex:chang_refsdal}
The non-degenerate harmonic mapping $f(z) = z - 1/\conj{z}$ is not light since
$f^{-1}( \{ 0 \}) = \{ z \in \C : \abs{z} = 1 \} = \cC$.
Let $\eta \in \C \setminus \{ 0 \}$.  Then $f(z) = \eta$ has the two solutions
\begin{equation*}
z_\pm(\eta)
= \frac{1 \pm \sqrt{1 + 4 \abs{\eta}^{-2}}}{2} \eta
\end{equation*}
which satisfy $\abs{z_+(\eta)} > 1$ and $\abs{z_-(\eta)} < 1$, and 
$\abs{z_\pm(\eta)} \to 1$ if $\eta \to 0$.
Figure~\ref{fig:chang_refsdal} shows a path $\eta(t)$ crossing the origin 
(panel 1) and the corresponding homotopy curves (panel 2).
For $\eta(t) = 0$ the two solutions are replaced by all points on the unit 
circle.
Panels 3 and 4 show a similar example with a non-smooth path $\eta(t)$, 
which leads to homotopy curves exiting $\cC$ at different points.
\end{example}

\begin{figure}
{\centering
\includegraphics[width=0.24\linewidth]{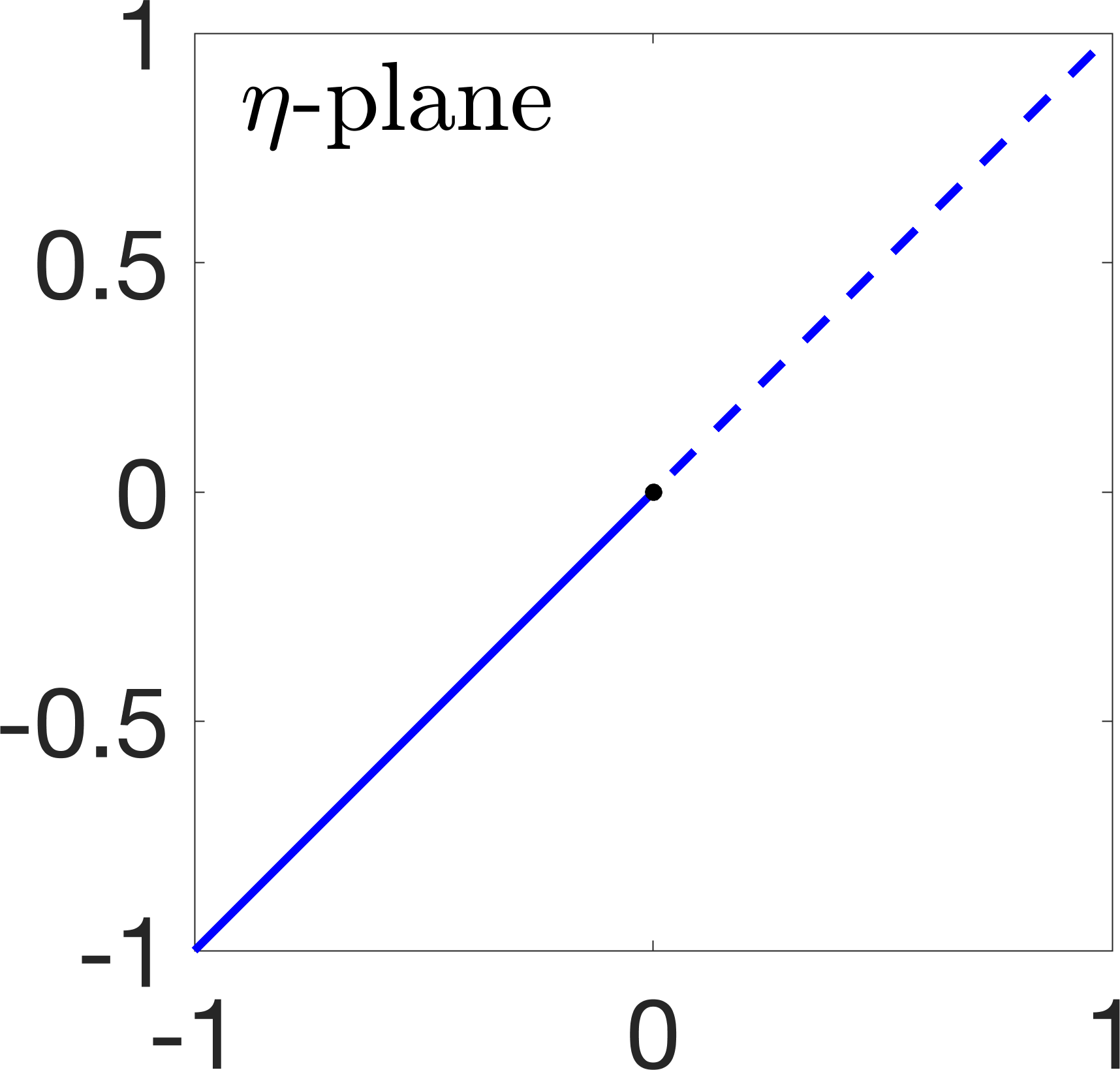}
\includegraphics[width=0.24\linewidth]{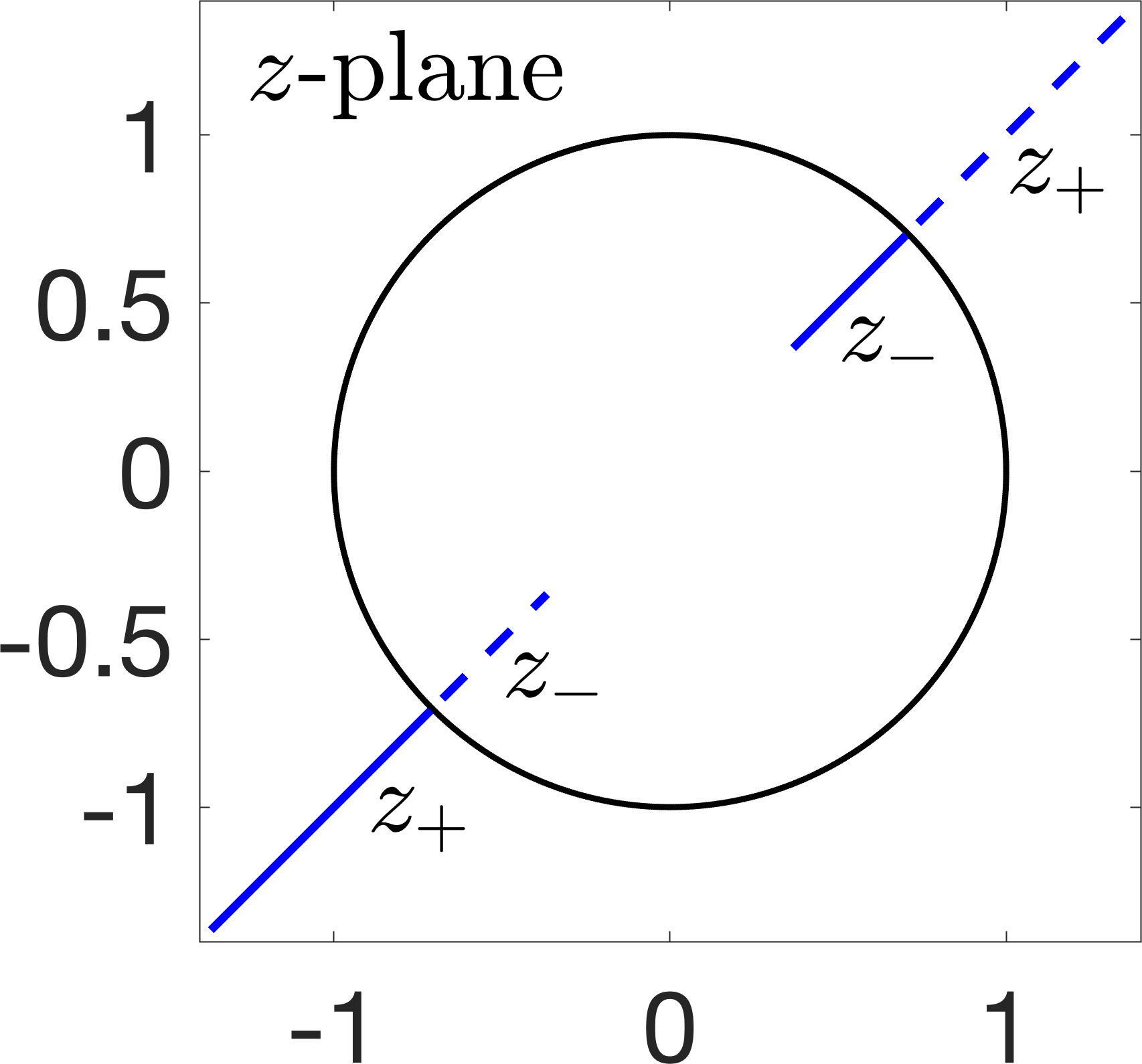}
\includegraphics[width=0.24\linewidth]{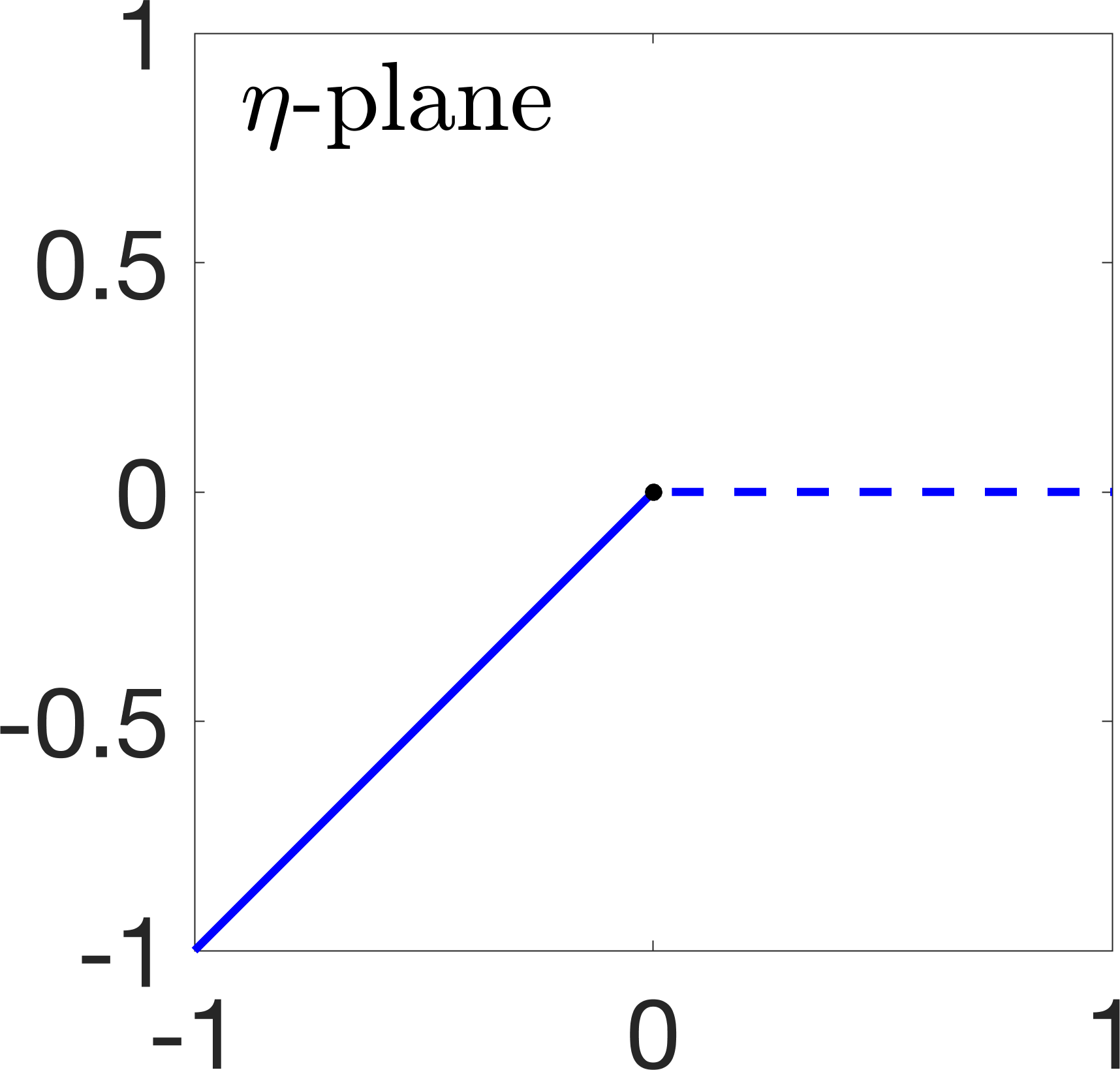}
\includegraphics[width=0.24\linewidth]{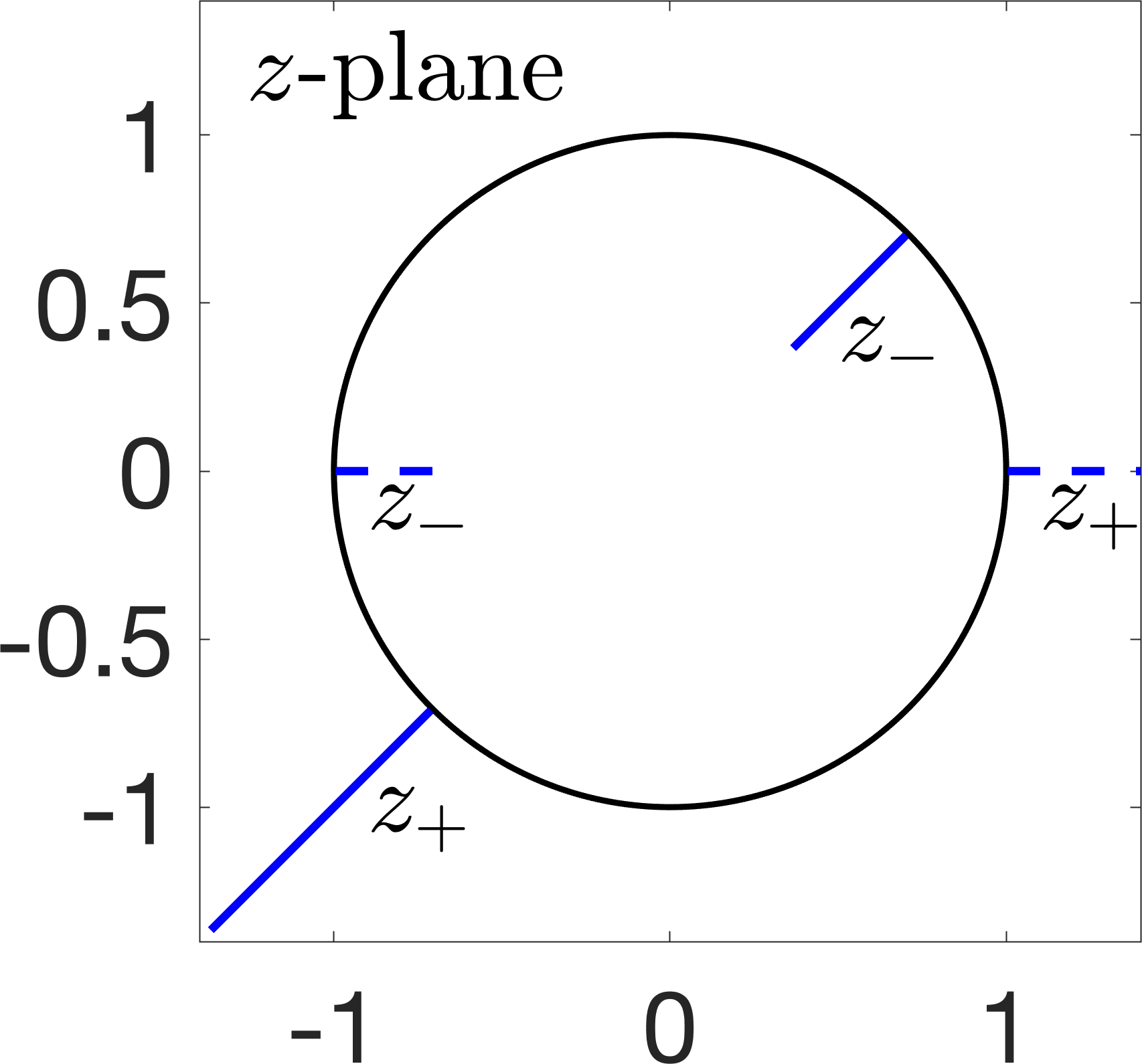}

}
\caption{Homotopy curves of $f(z) = z - 1/\conj{z}$ for two paths $\eta(t)$ 
(panels 1 and 3), divided in a solid and dashed blue part, 
and corresponding pre-images (panels 2 and 4); see 
Example~\ref{ex:chang_refsdal}.
The black circle is $\cC = f^{-1}(\{ 0 \})$.}
\label{fig:chang_refsdal}
\end{figure}

The analysis of the homotopy curves also confirms that 
Definition~\ref{def:transportpath} of a transport path is reasonable.
The points $\eta_k$ are in $\C \setminus f(\cC)$ to avoid a zero Jacobian 
and potentially infinitely many solutions.
Moreover, to avoid bifurcation points of the homotopy curves, 
transport paths cross the caustics only at fold points, which can be handled 
by Theorem~\ref{thm:zero_at_crit}.

\section{Implementation}
\label{sect:implementation}

A MATLAB implementation of our method is freely available at
\begin{center}
\url{https://github.com/transportofimages/}
\end{center}
This toolbox contains routines to compute the zeros of non-degenerate harmonic 
mappings and the critical curves and caustics.
Moreover, it contains m-files to reproduce all examples in 
Section~\ref{sect:numerics}.
We briefly describe two key aspects of our implementation.

\subsection{Computation of the caustics}\label{sect:caustic_computation}

The transport of images methods bases essentially upon the correct handling of 
caustic crossings.
To compute the caustics we first compute the critical curves.
Critical points $z \in \cC \setminus \cM$ satisfy $\omega(z) = e^{i t}$ 
with $t \in \co{0, 2 \pi}$; see~\eqref{eqn:crit_omega}.
Since $\omega$ is rational for non-degenerate harmonic mappings, $\omega(z) = 
e^{it}$ is equivalent to a polynomial equation with $\deg(\omega)$ many 
solutions.
We first solve $\omega(z) = 1$ via the polynomial equation.  Then,  for
$0 = t_1 < t_2 < \ldots < t_k < 2 \pi$ we 
successively solve $\omega(z) = e^{i t_{j+1}}$ with Newton's method
(with initial points $\omega^{-1}(\{ e^{i t_j} \})$).
If successful,
this procedure gives $\deg(\omega)$ many (discretized) critical arcs, 
parametrized according to~\eqref{eqn:parametrization}.
Gluing them together yields the critical curves.
Finally, the image under $f$ of the critical curves is a 
discretization of the caustics of $f$.

\subsection{Construction of transport paths}
\label{sect:transport_path}

We construct a transport path to $0$
along $R_{\theta} = \{re^{i\theta}: r > 0\}$, $\theta \in \co{0, 2 \pi}$. 
Beneficially, the intersection points of the caustic and the ray $R_\theta$
can be read off from the argument of the caustic points.
Also, as discussed in Remark~\ref{rem:multiple_caustic_point}, crossing the 
caustics in a specific direction is not essential, as long as the crossing 
direction is not tangential to the caustics.  For brevity we still 
call $(\eta_1,\dots,\eta_n) \in \C^n$ a transport path if $f^{-1}(\{ \eta_k 
\})$ or $S$ in~\eqref{eqn:Sk_cross} is a prediction set of $f^{-1}(\{ 
\eta_{k+1} \})$, even if the step from $\eta_k$ to $\eta_{k+1}$ is not in the 
direction $c$ in Theorem~\ref{thm:correction_singular}, i.e., even if 
$\arg(\eta_{k+1} - \eta_k) \neq \arg(\pm c)$.
Determining a suitable angle $\theta \in \co{0, 2\pi}$ for $R_\theta$ can be 
challenging.  In particular, the transport 
phase (along $R_\theta$) may fail if too small step sizes are required,
or if $R_\theta$ is tangential or almost tangential to the caustics,
or if $R_\theta$ intersects the caustic at or near cusps or other 
non-fold points.  To overcome this difficulty we choose the angle $\theta \in 
\co{0, 2\pi}$ at random (e.g., uniformly distributed). If the transport phase 
with $\theta$ is not successful we restart with a new random angle.

We construct a transport path to $0$ along $R_\theta$ as follows:

\textbf{(1)} Set $\eta_1 = 2e^{i \theta} \max_{z \in \cC} \abs{f(z)}$.  If 
Theorem~\ref{thm:zeros_at_poles} with $\eta_1$ does not apply to all poles of 
$f$, e.g., two distinct points $\zeta_j$ and $\zeta_\ell$ are attracted by the 
same solution of $f(z) = \eta_1$, we increase $\abs{\eta_1}$ and proceed.  
Eventually, $\abs{\eta_1}$ is large enough and Theorem~\ref{thm:zeros_at_poles} 
applies to all poles of $f$ simultaneously.

\textbf{(2)} Let $\{ \xi_1, \dots, \xi_\ell \} = f(\cC) \cap R_\theta$ with 
$\abs{\xi_1} > \ldots > \abs{\xi_\ell} > 0$.
We compute these intersection points from the discretized caustics with 
bisection.
For each $\xi_k$ let $\eta_{2k}, \eta_{2k+1} \in R_\theta$ with equal distance 
to $\xi_k$, and such that $\abs{\eta_1} > \abs{\eta_2} > \ldots > 
\abs{\eta_{2\ell + 1}} > 0$.  Then $P = (\eta_1,\eta_2,\dots,\eta_{2\ell + 1}, 
0)$ is a potential transport path.

\textbf{(3)} We refine $P$ recursively with the following 
divide-and-conquer scheme to obtain a transport path.

\textbf{(3a)}  If the step from $\eta_j$ to $\eta_{j+1}$, where the line 
segment $\cc{\eta_j,\eta_{j+1}}$ does not intersect the caustics, is 
unsuccessful, i.e., if $f^{-1}(\{\eta_j\})$ is not a prediction set of 
$f^{-1}(\{\eta_{j+1}\})$, we divide the step from $\eta_j$ to $\eta_{j+1}$ into 
two substeps by inserting $\eta' = (\eta_j + \eta_{j+1})/2$ into $P$. 

\textbf{(3b)} If the step from $\eta_j$ to $\eta_{j+1}$, where the line segment 
$\cc{\eta_j,\eta_{j+1}}$ intersects the caustic, is unsuccessful, i.e., if $S$ 
in~\eqref{eqn:Sk_cross} is not a prediction set of $f^{-1}(\{\eta_{j+1}\})$, we 
divide the step from $\eta_j$ to $\eta_{j+1}$ into three substeps by inserting 
$\eta_1' = (3 \eta_j + \eta_{j+1})/4$ and $\eta_2' = (\eta_j + 3 \eta_{j+1})/4$ 
into $P$.

\textbf{(4)} We proceed, dividing the steps in (3a) and (3b) until $P$ is a 
transport path, or until we reached a prescribed recursion depth. 
In the latter case we restart with a new random angle $\theta$ in (1).

\section{Numerical examples}
\label{sect:numerics}

In order to verify the numerical results we test our method on the following
functions, for which the number of zeros is known analytically.
\begin{enumerate}
\item Wilmshurst's harmonic polynomial
\begin{equation}\label{eqn:wilmshurst}
f(z) = (z-1)^n + z^n + \conj{i (z-1)^n - i z^n}, \quad n \geq 1,
\end{equation}
is non-degenerate and has exactly $n^2$ zeros.  This is the maximum number of 
zeros of a harmonic polynomial $f(z) = p(z) + \conj{q(z)}$ with $\deg(p) = n$ 
and $\deg(q) < n$; see~\cite{Wilmshurst1998}.

\item The rational harmonic mapping in Mao, Petters and 
Witt~\cite{MaoPettersWitt1999},
\begin{equation}\label{eqn:mpw}
f(z)= z - \conj{\left(\frac{z^{n-1}}{z^n - \rho^n}\right)},
\quad n \geq 3, \quad \rho > 0,
\end{equation}
is non-degenerate and has exactly $3n+1$ zeros for $\rho < \rho_{c}$, 
$2n+1$ zeros for $\rho = \rho_{c}$ and $n+1$ zeros for $\rho > \rho_{c}$, 
where 
\begin{equation*}
\rho_{c} = \left( \frac{n-2}{n} \right)^{\frac{1}{2}} 
\left(\frac{2}{n-2}\right)^{\frac{1}{n}};
\end{equation*}
see~\cite[Prop.~2.1]{LuceSeteLiesen2014}.

\item Rhie's rational harmonic mapping
\begin{equation}\label{eqn:rhie}
f(z)
= z - \conj{\left((1-\eps) \frac{z^{n-1}}{z^n - \rho^n} + 
\frac{\eps}{z}\right)},
\quad n \geq 3, \quad \rho, \eps > 0,
\end{equation}
is non-degenerate and has exactly $5n$ zeros if $\rho < \rho_{c}$ (as above) 
and if $\eps < \eps^*$, where $\eps^*$ depends on $n$ and $\rho$; 
see~\cite[Thm.~2.2]{LuceSeteLiesen2014} for details.
This is the maximum number of zeros of a harmonic mapping $f(z) = z - 
\conj{r(z)}$ with a rational function $r$ of degree $n+1 \geq 2$; 
see~\cite[Thm.~1]{KhavinsonNeumann2006}.
\end{enumerate}

These functions should be challenging for the transport of images method
since they have a large number of zeros and very nested caustics; see 
Figure~\ref{fig:exp_caus}.
In gravitational lensing, the functions~\eqref{eqn:mpw}, \eqref{eqn:rhie} and 
their zeros are of interest; see~\cite{KhavinsonNeumann2008, 
LuceSeteLiesen2014}.

\begin{figure}[t]
\includegraphics[width=0.308\linewidth]{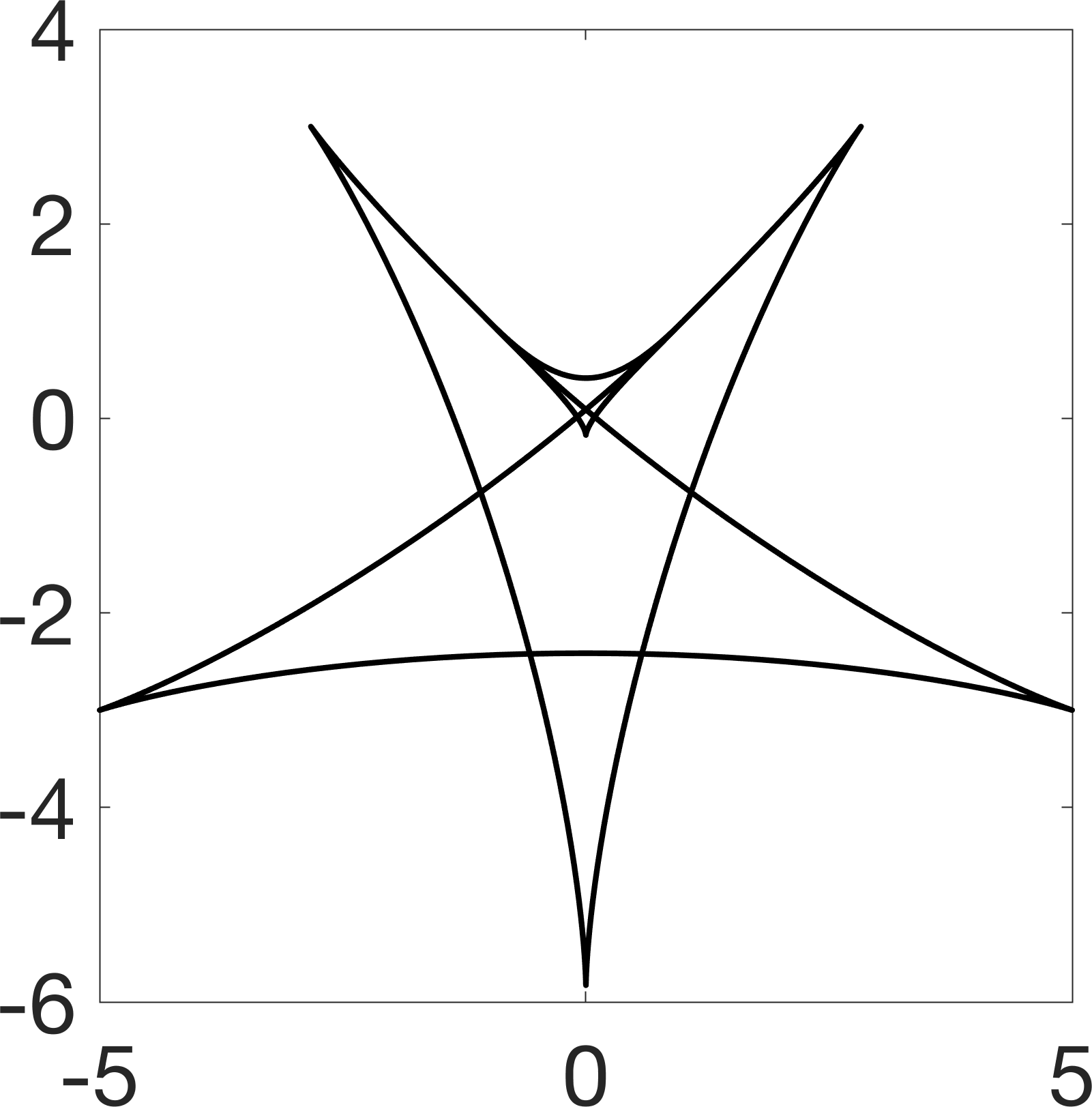}
\includegraphics[width=0.338\linewidth]{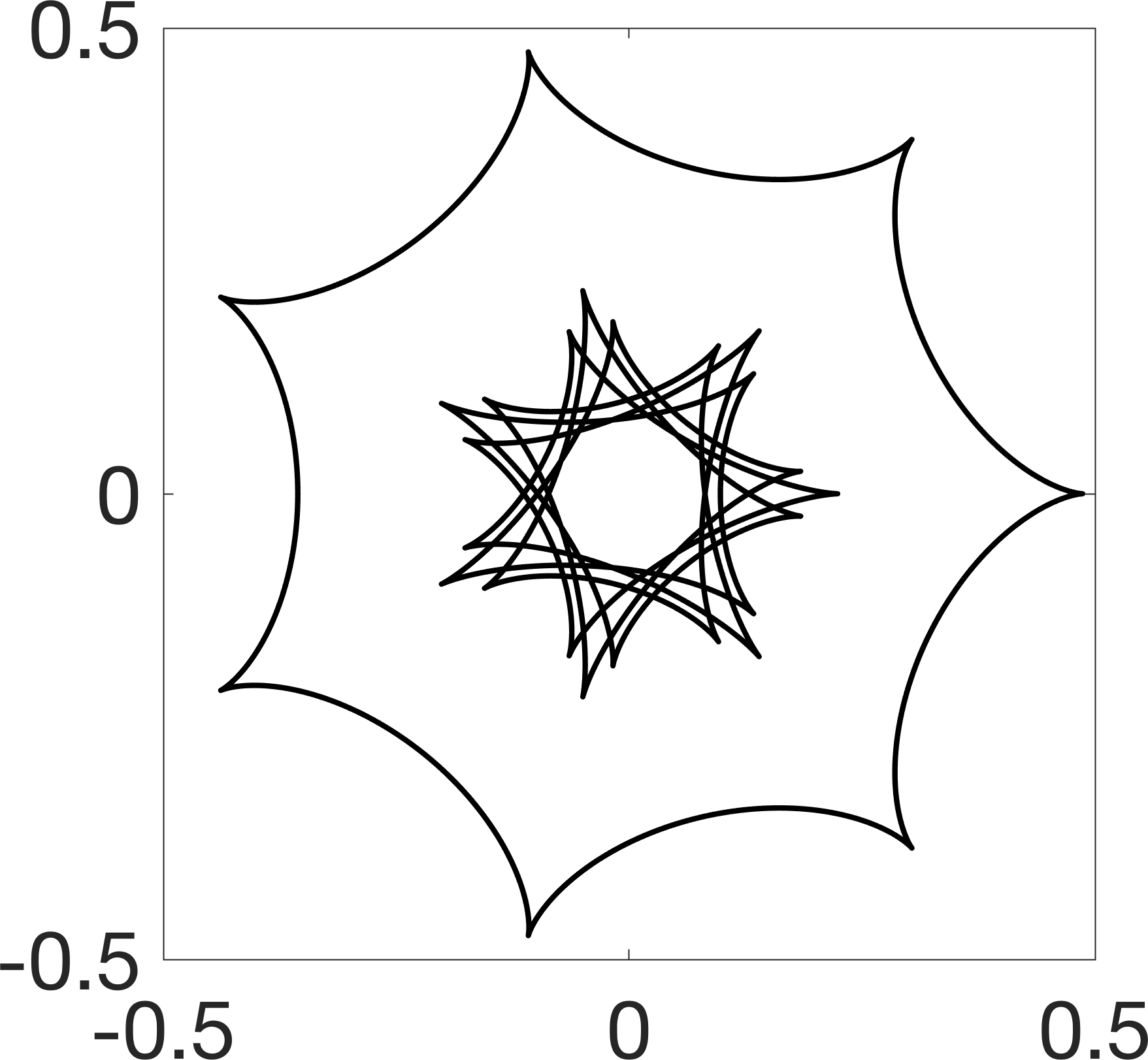}
\includegraphics[width=0.338\linewidth]{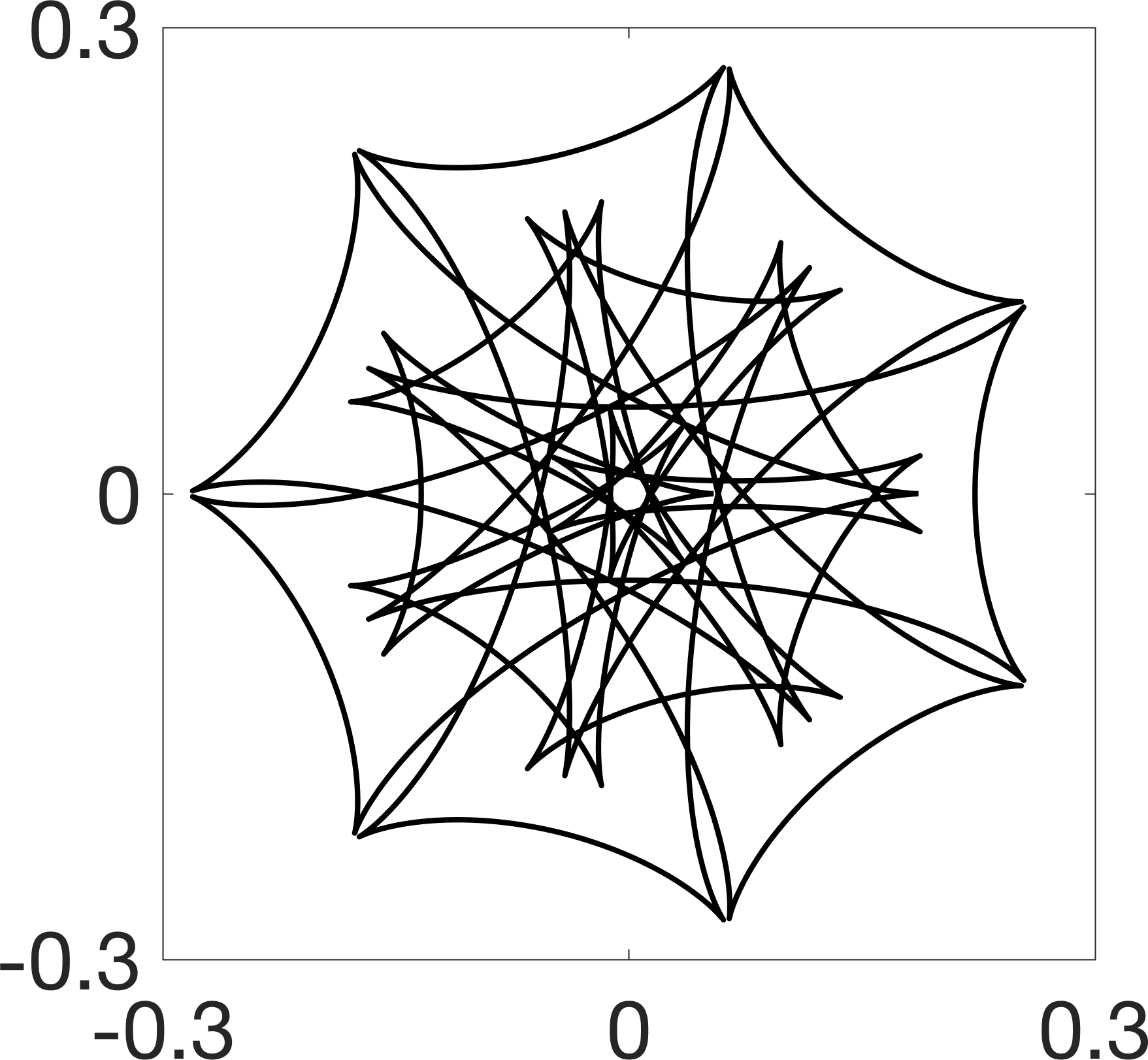}
\caption{Caustics of harmonic mappings: \eqref{eqn:wilmshurst} with $n=3$ 
(left), \eqref{eqn:mpw} with $n=7$ and $\rho = 0.7$ (middle), \eqref{eqn:rhie} 
with $n=7$, $\rho = 0.7$ and $\eps = 0.1$ (right).}
\label{fig:exp_caus}
\end{figure}

The following computations have been performed in MATLAB R2019b on an i7-7700 
\@ 4 $\times$ 3.60~GHz CPU with 16~GB~RAM using our 
implementation\footnote{\url{https://github.com/transportofimages/}}, which 
also contains m-files to reproduce all examples.

\begin{table}[t]
\centering
\begin{tabular}{|lrrrrrr|} \hline
Function & Zeros & Max. res. & Time & N.\,iter. & Steps & Ref. \\ \hline
\eqref{eqn:example_log} & 4 & 6.6613e-16 & 36 ms & 237 & 8 & 2 \\
\eqref{eqn:wilmshurst} & 9 & 1.7764e-15 & 44 ms & 291 & 8 & 1 \\
\eqref{eqn:mpw} & 22 & 8.9509e-16 & 62 ms & 1646 & 19 & 4 \\
\eqref{eqn:rhie} & 35 & 7.8505e-16 & 85 ms & 3851 & 
40 & 9 \\
\hline
\end{tabular}
\caption{The transport of images method for~\eqref{eqn:example_log}, 
\eqref{eqn:wilmshurst} with $n=3$, \eqref{eqn:mpw} with $n=7$, $\rho = 0.7$, 
and \eqref{eqn:rhie} with $n=7$, $\rho = 0.7$, $\eps = 0.1$: maximal residual 
(max.\ res.), computation time, number of harmonic Newton iterations 
(N.\,iter.), number of transport steps, and number of step refinements (Ref.).}
\label{tab:1st_exp}
\end{table}

First, we compute the zeros of~\eqref{eqn:wilmshurst}, 
\eqref{eqn:mpw}, \eqref{eqn:rhie}, and of the transcendental harmonic 
mapping~\eqref{eqn:example_log} with the transport of images method.
Here we fix the angle $\theta = \pi/50$ in the construction of the transport 
path (see Section~\ref{sect:transport_path}).  The results are displayed in 
Table~\ref{tab:1st_exp}.
We make the following three main observations:
(1) The transport of images method finishes with the correct number 
of zeros for each function, in particular for the transcendental harmonic 
mapping~\eqref{eqn:example_log}.
The logarithmic term makes a symbolic computation of the zeros difficult,
e.g., Mathematica cannot determine the zeros of this function.
We emphasize that the number of zeros is not an input parameter of our method.
(2) The results are computed within a fraction of a second on a standard 
computer.
(3) In each case the computed results are very accurate, as shown by the 
residual $\abs{f(z_j)}$ at the computed zeros, which is in the order of the 
machine precision.  Table~\ref{tab:1st_exp} shows the maximal residual over all 
computed zeros for the respective functions.
Furthermore, we see that a step refinement as described in 
Section~\ref{sect:transport_path} was necessary for all functions.
The number of (harmonic) Newton iterations also includes iterations that are 
necessary to decide whether a step has to be refined.

\begin{figure}[t!]
\centering{\includegraphics[width=\linewidth]{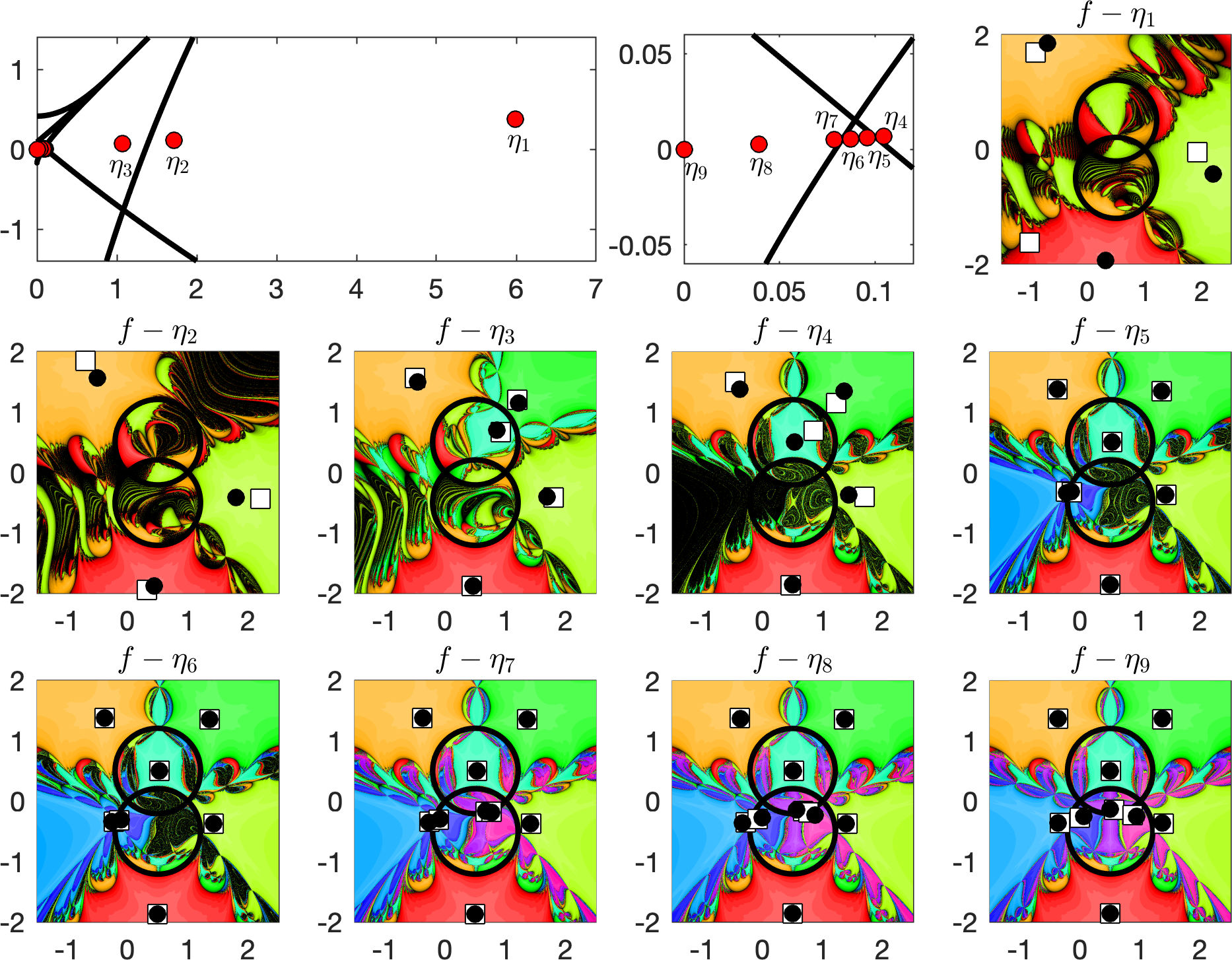}}
\caption{Transport of images method for~\eqref{eqn:wilmshurst} with $n=3$: 
Transport path and dynamics of $H_{f-\eta_k}$ with prediction set 
(white squares) and zeros of $f - \eta_k$ (black dots).}
\label{fig:wilmshurst_basins}
\end{figure}

Figure~\ref{fig:wilmshurst_basins} illustrates the transport of images method 
for~\eqref{eqn:wilmshurst} with $n=3$.
Panels~1 and~2 display the transport path and a zoom-in close to the origin.
The remaining panels show the 
dynamics of the harmonic Newton maps $H_{f-\eta_k}$ for $k = 1, 2, \ldots, 9$.
Recall from Remark~\ref{rem:basins} that points which are attracted by the same 
solution have the same color.
The initial phase is visualized in the top right plot.
Every solution of $f(z) = \eta_1$ (black dots) attracts exactly one of the 
initial points (white squares), constructed as in 
Section~\ref{sect:initial_phase}, hence these initial points form a prediction 
set of $f^{-1}(\{ \eta_1 \})$.
For aesthetic reasons we have set $\eta_1 = 6e^{i \pi/50}$ by hand.
The remaining panels visualize the steps in the transport phase, again with the 
solutions of $f(z) = \eta_k$ (black dots) and the prediction sets (white 
squares).
In particular, we see how the basins of attraction evolve while $\eta_k$ 
`travels' along the transport path.
New basins and their respective solutions appear in pairs for $\eta_3$ (green 
and cyan), $\eta_5$ (blue and light blue) and $\eta_7$ (pink and purple), right 
after a caustic crossing, as predicted by the theory.
For each step the prediction set of $f^{-1}(\{ \eta_{k+1} \})$ consists of 
$f^{-1}(\{ \eta_k \})$ (black dots in the previous panel), and of the points 
$z_\pm$ from Theorem~\ref{thm:zero_at_crit} in case of a caustic crossing.
We see that the `new' solutions are close to the predicted points $z_\pm$.
Note that Theorem~\ref{thm:correction_regular} does not apply to the step from
$\eta_7$ to $\eta_9$.  Therefore, this step was divided by introducing the point
$\eta_8 = (\eta_7 + \eta_9)/2$.
A transport path with more step refinements, but for the 
function~\eqref{eqn:rhie}, is displayed in Figure~\ref{fig:exp_transportpath}.

\begin{figure}[t!]
\centering{\includegraphics[width=\linewidth]{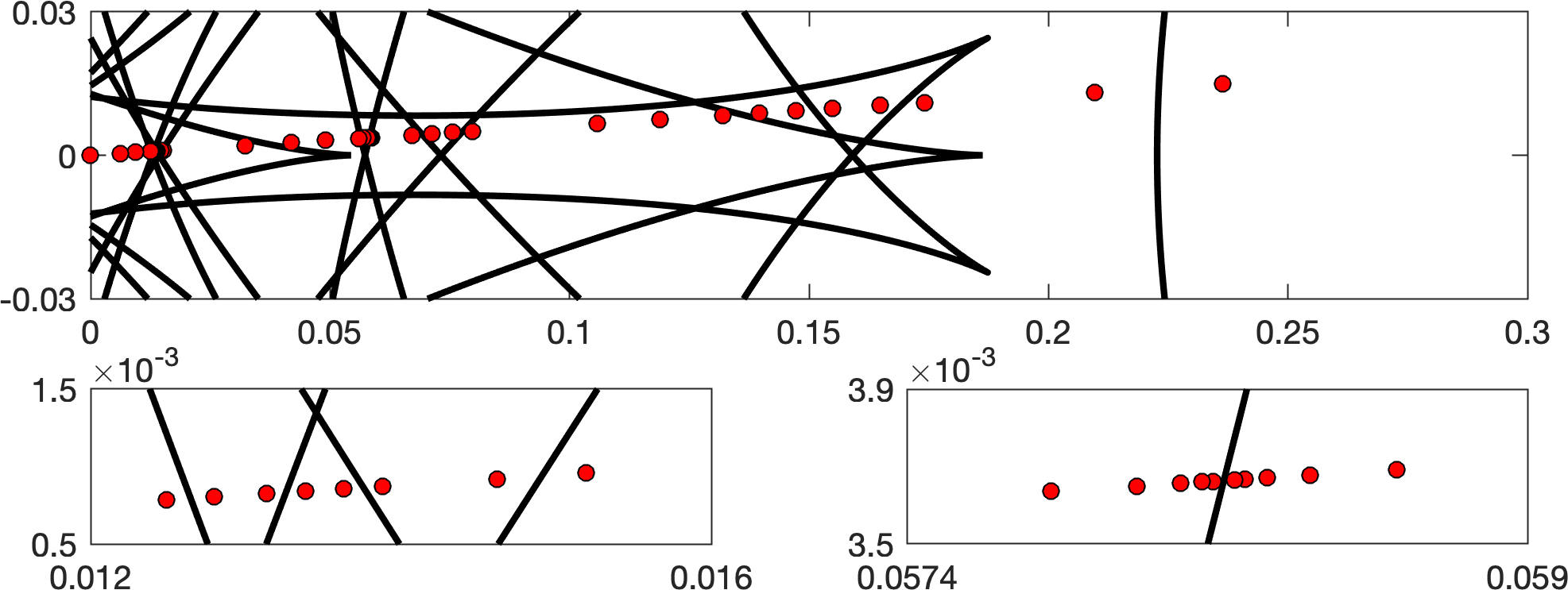}}
\caption{Transport path for~\eqref{eqn:rhie} with $n=7$, $\rho= 0.7$, $\eps = 
0.1$ and $\theta = \frac{\pi}{50}$.} \label{fig:exp_transportpath}
\end{figure}

Next, we compute the zeros of the function~\eqref{eqn:mpw} for different values 
of~$n$ with the transport of images method (without fixing the angle $\theta$).
For each $n = 3, 4, \ldots, 12$ we consider $50$ instances of~\eqref{eqn:mpw} 
with randomized $\rho \in \co{0.7, \rho_c}$, 
and apply our method to these functions.
Similarly, we consider $50$ instances of the function~\eqref{eqn:rhie} with 
random $\rho \in \co{0.7, \rho_c}$ and $\eps \in \cc{\frac{\eps^*}{2}, 
\frac{3\eps^*}{4}}$.
The quantities $\rho$ and $\eps$ are uniformly distributed in the respective 
intervals.
Table~\ref{tab:random_instances} shows the computed number of zeros, the number 
of harmonic Newton iterations, the number of restarts with a new angle 
$\theta$ and the computation time.
For each $n$, the quantities are averaged over all instances.
We observe the following.
First of all, the transport of images method terminates with the correct number 
of zeros for all $n$ and all instances.
For most instances the transport along the first ray is already 
successful.  For the remaining ones a restart with a new angle is necessary to 
compute the zeros.
In this example, computing the caustics takes roughly half of the 
overall computation time.
This can be exploited when solving $f(z) = \eta_k$ for several $\eta_k \in 
\C$, which is a scenario in the astrophysical application 
in~\cite[Sect.~10.5]{SchneiderEhlersFalco1999}.

\begin{table}[t!]
\centering
\mbox{
\hspace{-.36cm}
\begin{tabular}{|rrrrr|}
\hline
$n$ & Zer. & N.\,iter. & Rest. & Time \\ \hline
3 & 10 & 543.98 & 0.00 & 37 ms \\
4 & 13 & 926.68 & 0.00 & 45 ms \\
5 & 16 & 1328.06 & 0.02 & 50 ms \\
6 & 19 & 1432.50 & 0.00 & 53 ms \\
7 & 22 & 1765.74 & 0.02 & 60 ms \\
8 & 25 & 2253.86 & 0.02 & 68 ms \\
9 & 28 & 2653.40 & 0.00 & 73 ms \\
10 & 31 & 3126.28 & 0.04 & 81 ms \\
11 & 34 & 3393.04 & 0.00 & 88 ms \\
12 & 37 & 3926.20 & 0.02 & 97 ms \\
\hline
\end{tabular}
\hfill
\centering
\begin{tabular}{|rrrrr|}
\hline
$n$ & Zer. & N.\,iter. & Rest. & Time \\ \hline
3 & 15 & 948.46 & 0.00 & 44 ms \\
4 & 20 & 1613.32 & 0.02 & 55 ms \\
5 & 25 & 2046.24 & 0.02 & 61 ms \\
6 & 30 & 3014.40 & 0.00 & 70 ms \\
7 & 35 & 3590.90 & 0.04 & 81 ms \\
8 & 40 & 4989.04 & 0.00 & 93 ms \\
9 & 45 & 5820.60 & 0.06 & 107 ms \\
10 & 50 & 6941.06 & 0.06 & 120 ms \\
11 & 55 & 8035.50 & 0.10 & 137 ms \\
12 & 60 & 9154.42 & 0.04 & 147 ms \\
\hline
\end{tabular}}
\caption{The transport of images method averaged over $50$ instances.
Left:~\eqref{eqn:mpw} for $\rho \in \co{0.7, \rho_c}$. 
Right:~\eqref{eqn:rhie} for $\rho \in \co{0.7, \rho_c}$ 
and $\eps \in \cc{\frac{\eps^*}{2},\frac{3\eps^*}{4}}$.
Number of harmonic Newton iterations (N.\,iter.), number of restarts with new 
angle $\theta$ (Rest.), computation time.}
\label{tab:random_instances}
\end{table}

\begin{table}[t!]
\centering
\begin{tabular}{|lcccc|} \hline
Function & Zeros & Max. res. & Restarts & Time \\ \hline
\eqref{eqn:mpw} & 301 & 6.4737e-16 & 0 & 5.36 s \\
\eqref{eqn:rhie}& 125 & 1.3552e-15 & 7 & 
2.52 s \\
\hline \end{tabular}
\caption{The transport of images method for \eqref{eqn:mpw} with $n=100$, $\rho = 0.94$, and \eqref{eqn:rhie} with $n=25$, $\rho = 0.9$, $\eps = 0.4$.}
\label{tab:large_n}
\end{table}

\begin{figure}
{\centering
\includegraphics[width=0.327\linewidth]{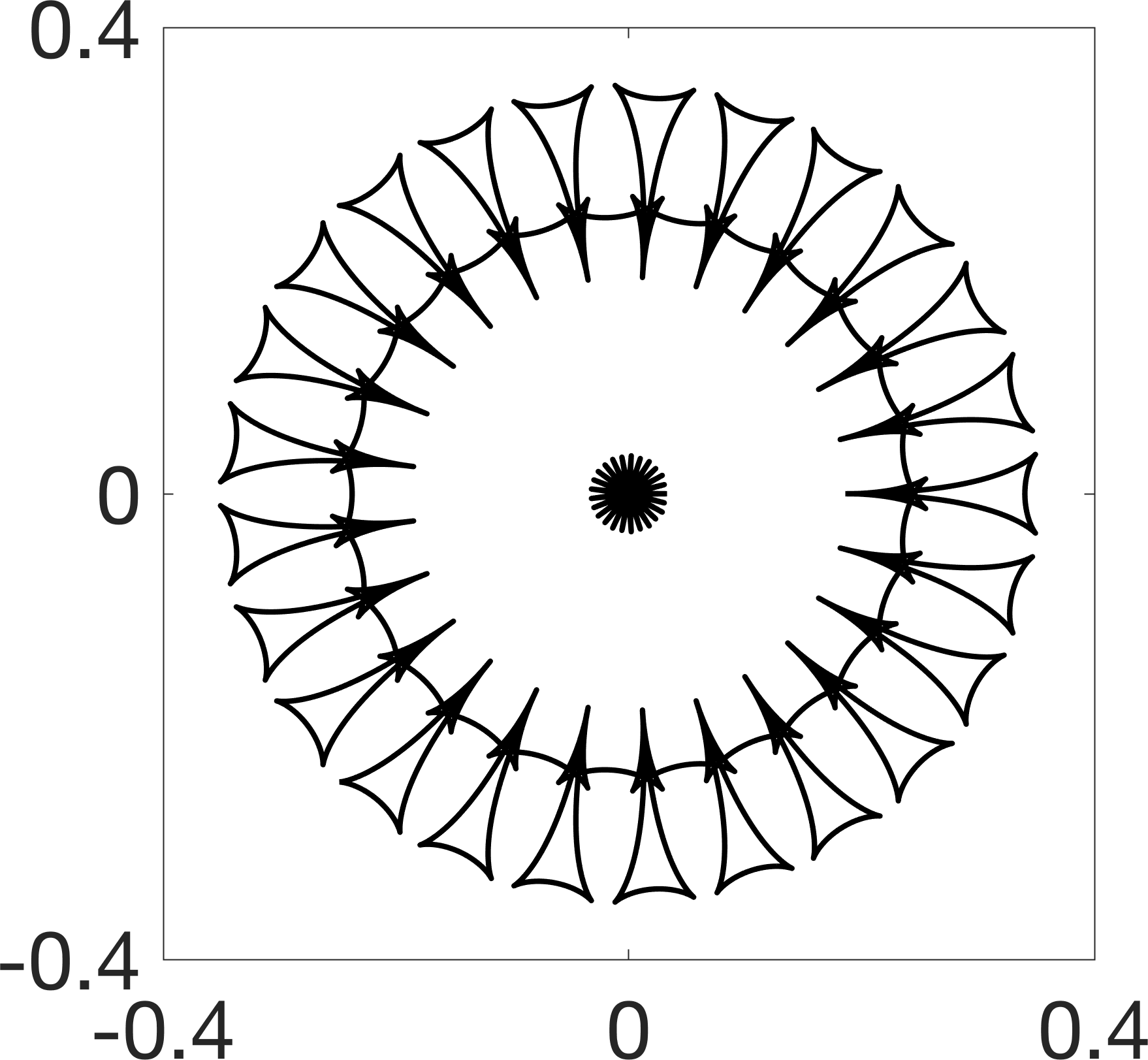}
\includegraphics[width=0.342\linewidth]{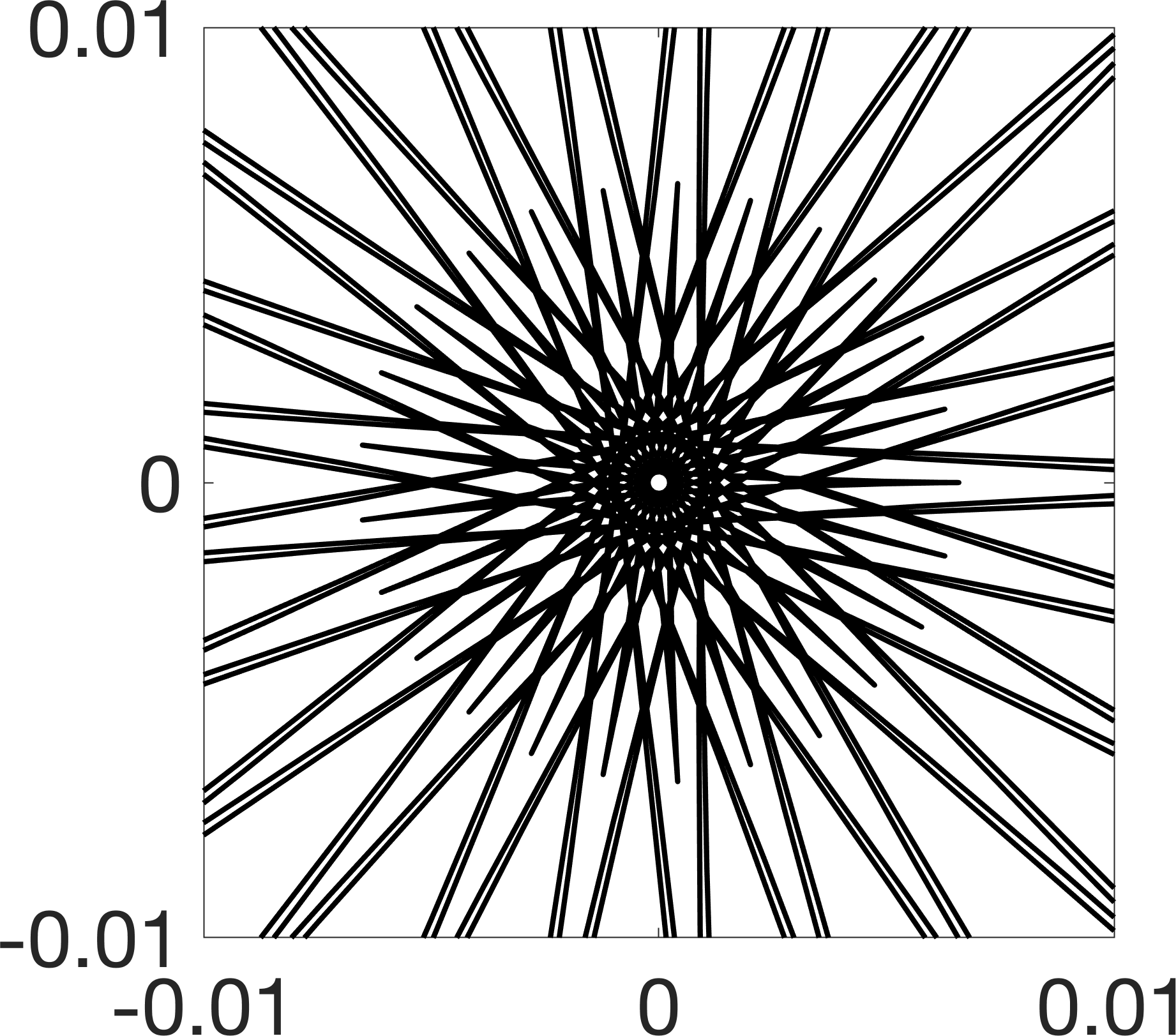}
\includegraphics[width=0.295\linewidth]{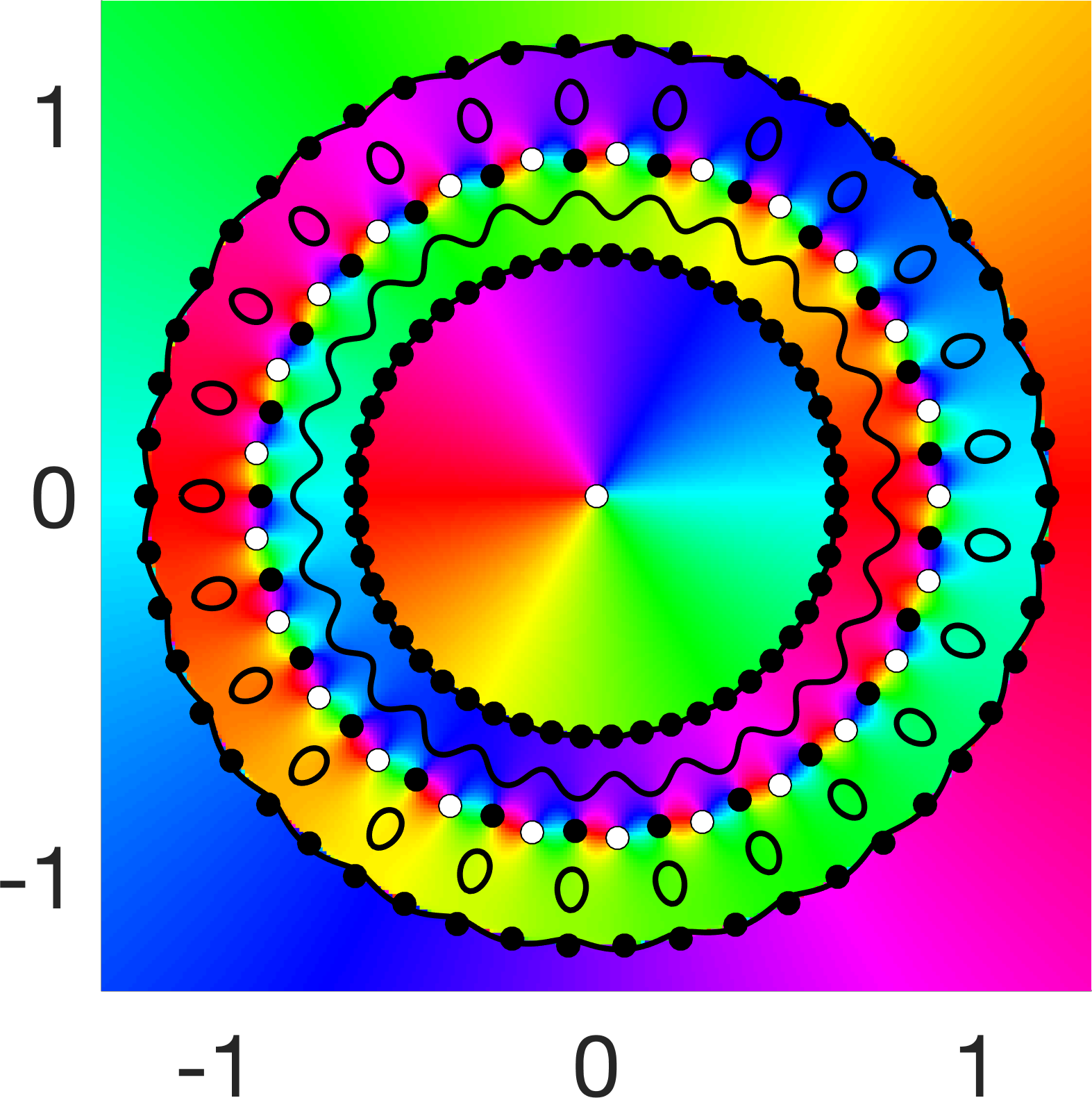}
}
\caption{
The harmonic mapping $f(z) = z - \conj{\left(0.6\frac{z^{24}}{z^{25} - 
0.9^{25}} + 0.4\frac{1}{z}\right)}$; see~\eqref{eqn:rhie}. Left: caustics.  
Middle: zoom 
of caustics. Right: phase 
plot with critical curves, zeros (black dots) and poles (white dots).}
\label{fig:large_n}
\end{figure}

In our next example we consider~\eqref{eqn:mpw} with $n = 100$ and $\rho = 
0.94$, and~\eqref{eqn:rhie} with $n = 25$, $\rho = 0.9$ and $\eps = 0.4$.
Note that every transport path along a ray $R_\theta$ intersects the caustics 
at least $100$ times for the first function and at least $50$ times for the 
second function.  The
transport of images method computes all zeros in a few seconds on a standard 
computer; see Table~\ref{tab:large_n}.
The residuals are close to the machine precision.
In contrast to the examples in Table~\ref{tab:random_instances}, seven restarts 
with a new angle $\theta$ are necessary for~\eqref{eqn:rhie} with $n = 25$.
This is due to the extremely nested caustics; see Figure~\ref{fig:large_n} 
(left and middle).
The $125$ zeros and $26$ poles of the second function are displayed in a phase 
plot in Figure~\ref{fig:large_n}.  In a phase plot of $f$ the complex plane is
colored according to the phase $f(z)/\abs{f(z)}$; see~\cite{Wegert2012} for an 
extensive discussion of phase plots and their applications.

As mentioned in the introduction we are not aware of any specialized software 
to compute \emph{all} zeros of harmonic mappings.
However, we compare our method with the general purpose root finder
from~\cite{NakatsukasaNoferiniTownsend2015}, which is based on B\'ezout 
resultants.  There are two implementations of 
this method: \texttt{rootsb}\footnote{rootsb, version of March 20, 2021, \url{ 
https://de.mathworks.com/matlabcentral/fileexchange/44084-computing-common-zeros
-of-two-bivariate-functions}.} and the \texttt{roots} command of 
Chebfun2\footnote{Chebfun2, version of September 30, 
2020, \url{www.chebfun.org}.}.
Chebfun2~\cite{TownsendTrefethen2013} is the state-of-the-art toolbox for 
numerical computation with (real or complex) smooth and bounded functions on a 
rectangle in the plane.
The idea of~\cite{NakatsukasaNoferiniTownsend2015} is to find the zeros of two 
(real-valued) smooth functions $f(x,y) = g(x,y) = 0$ by first computing 
polynomial approximants $p(x,y)$ and $q(x,y)$ that are accurate to machine 
precision in the supremum norm relatively to $f$ and $g$, respectively, and then 
solving the polynomial system $p(x,y) = q(x,y) = 0$.
In this procedure the polynomial approximants are locally resampled if 
appropriate.  While \texttt{rootsb} uses the original functions, Chebfun2 uses 
the computed interpolant for the resampling.

\begin{figure}[t]
\includegraphics[width=0.485\linewidth]{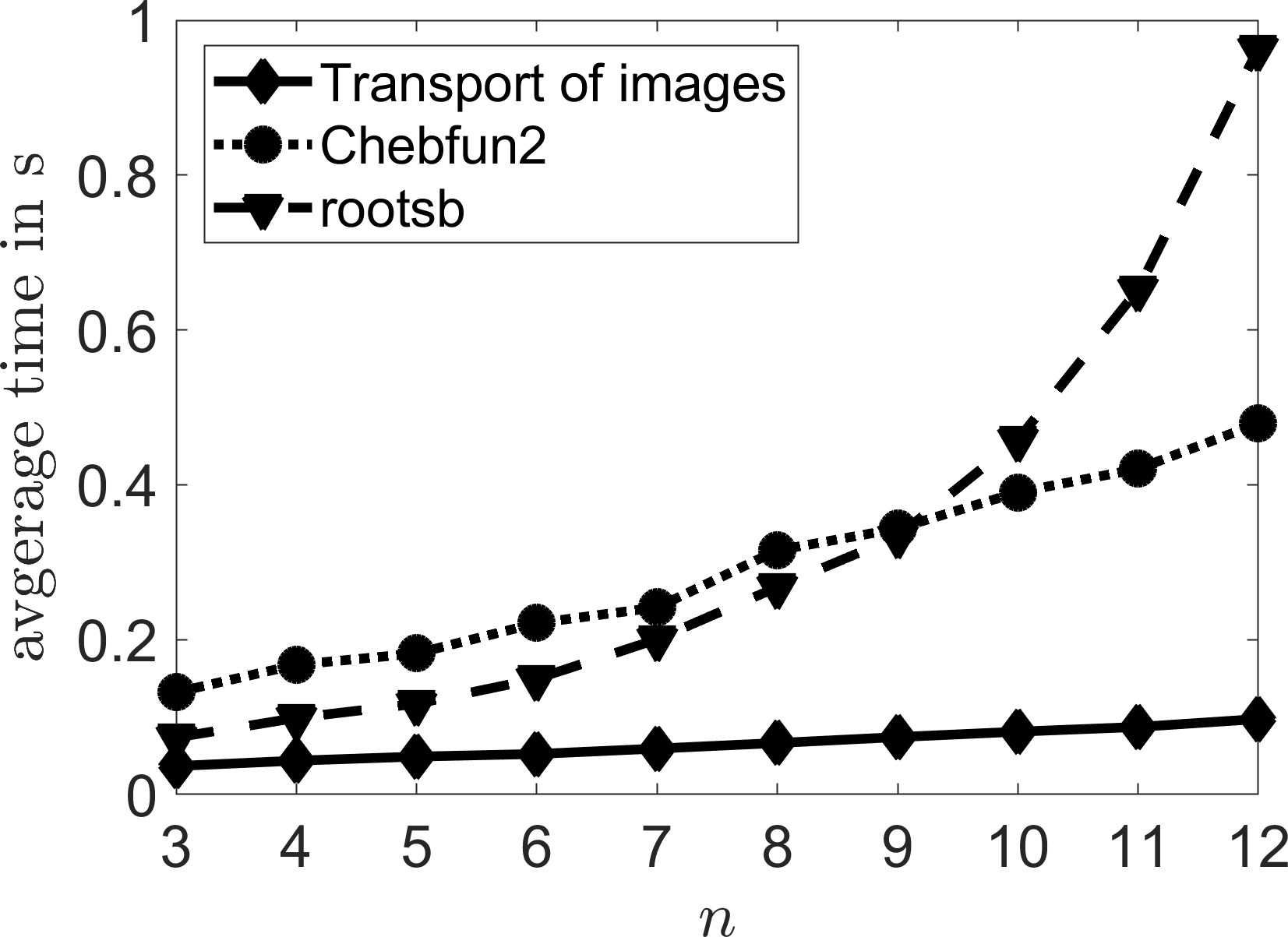}
\hfill
\includegraphics[width=0.485\linewidth]{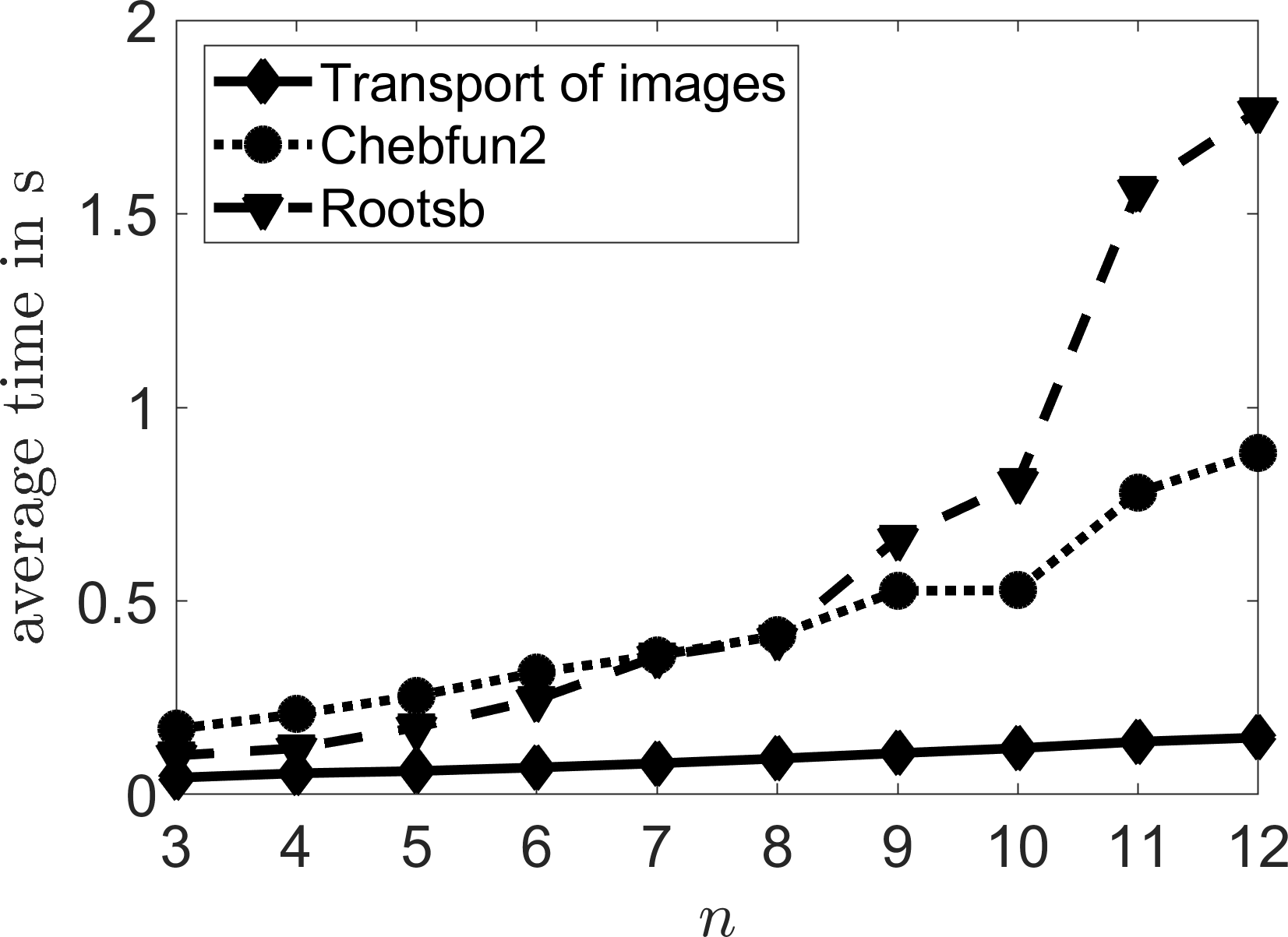}
\caption{Timings of the transport of images method, Chebfun2 and rootsb averaged over the $50$ instances for~\eqref{eqn:mpw} (left) 
and~\eqref{eqn:rhie} (right) as in Table~\ref{tab:random_instances}.}
\label{fig:chebfun}
\end{figure}

\begin{table}[t!]
\centering
\begin{tabular}{|l|rr|rr|rr|} \hline
Function &  \multicolumn{2}{c|}{Transport of images} & 
\multicolumn{2}{c|}{Chebfun2} & \multicolumn{2}{c|}{rootsb} \\ 
 &  Zeros  & Time &  Zeros & Time &  Zeros & Time  \\ \hline
\eqref{eqn:mpw} & 301  & 5.36 s & 10017 &  45.73 s & 301  & 60.93 s\\
\eqref{eqn:rhie} & 125  & 2.52 s & 95 &  9.98 s & 75  & 10.78 s   \\
\hline  \end{tabular}
\caption{Comparison of the transport of images method with Chebfun2 and 
\texttt{rootsb} for the functions \eqref{eqn:mpw} with $n=100$, $\rho = 0.94$, 
and \eqref{eqn:rhie} with $n=25$, $\rho = 0.9$, $\eps = 0.4$, see also 
Table~\ref{tab:large_n}.}
\label{tab:large_n_chebfun}
\end{table}

We consider again the functions in Table~\ref{tab:random_instances}.
To compute the zeros of~\eqref{eqn:mpw} and~\eqref{eqn:rhie} with Chebfun2 and 
\texttt{rootsb} we 
multiply them by their respective denominators to remove the poles and thus 
obtain the functions
\begin{equation*}
\begin{split}
F_1(z) &= z (\conj{z}^n - \rho^n) - \conj{z}^{n-1}, \\
F_2(z) &= (\abs{z}^2 - 1) \conj{z}^n + (\eps - \abs{z}^2) \rho^n.
\end{split}
\end{equation*}
We use the square $R = \cc{-1.2, 1.2} \times \cc{-1.2, 1.2}$, 
which contains all zeros of $F_1$ and $F_2$ for all $n = 3, 4, \ldots, 12$ and 
all $\rho$ and $\eps$ as above.
Note that the supremum norm of $F_j$ on $R$ grows exponentially with $n$.
Despite this difficulty, Chebfun2 and \texttt{rootsb} compute all zeros of 
$F_1$ for all instances.
For $F_2$, \texttt{rootsb} computes all zeros of all instances, and Chebfun2 
returns the correct number of zeros for all instances for 
$n = 3, 4, \ldots, 9$ and $n = 11$, but a wrong 
number of zeros for $5$ out of $50$ instances for $n = 10$ or $n = 12$.
This may be due to the fact that the magnitude of $F_2$ (and also $F_1$) varies 
highly, which is known as the `dynamical range issue'.  Here, resampling 
with the original function in \texttt{rootsb} leads to more accurate results 
at the expense of speed; see Figure~\ref{fig:chebfun}.
However, for larger $n$ as in Table~\ref{tab:large_n_chebfun}, 
\texttt{rootsb} also does not compute the correct zeros.
This highlights the ill-behavedness of the considered functions
and how difficult it is to compute their zeros.
The problem-adapted transport of images method finishes in all cases with 
the correct number of zeros; see Table~\ref{tab:random_instances}.
Moreover, it is much faster than Chebfun2 and \texttt{rootsb} in these 
examples; see Figure~\ref{fig:chebfun} and Table~\ref{tab:large_n_chebfun}.

\section{Summary and outlook}\label{sect:summary}

We established and analyzed the transport of images method, which is the first
problem-adapted method to compute \emph{all} zeros of a (non-degenerate)
harmonic mapping $f$. Our method is guaranteed to find all zeros of $f$, even 
though this number is a priori unknown,
provided that no zero of $f$ is singular.
Moreover, our MATLAB implementation
performs remarkably well for several functions from the literature.
While we focused on the computation of zeros in our examples, we can also 
compute all solutions of $f(z) = \eta$ for any point $\eta \in \C \setminus 
f(\cC)$.
For this, we either consider $f - \eta$, or replace the transport path 
from $\eta_1$ to $0$ by a transport path from $\eta_1$ to $\eta$ (with 
$\eta_1$ from the initial phase).
More generally, we can compute all solutions of $f(z) = \eta$ given all 
solutions of $f(z) = \xi$ (with any points $\eta, \xi \in \C \setminus f(\cC)$) 
and the 
caustics of $f$, by transporting the solutions along a transport path from 
$\xi$ to~$\eta$.
This allows us to deduce the solutions for all right-hand sides 
from the local behavior of $f$ on the critical curves.

Further studies of the transport of images method could include different 
constructions of transport paths (e.g., step size control), improvements in the 
computation of the caustics, as well as the 
investigation of other algorithms as corrector.  Generalizing the transport of 
images method to a broader class of functions, e.g., for harmonic mappings on 
domains with boundary (see, e.g.,~\cite{BergweilerEremenko2010}), 
might also be of interest.
Finally, adjusting the transport of images method to the special 
functions of gravitational lensing should be of particular interest.

\paragraph*{Acknowledgements}
We thank J\"org Liesen for several helpful suggestions on the manuscript.
Moreover, we are grateful to the anonymous referees for valuable comments, 
which lead to improvements of this work.

\footnotesize
\bibliography{transport_of_images}
\bibliographystyle{siam} 

\end{document}